\theoremstyle{plain}
\newtheorem{theorem}{Theorem}[section]
\newtheorem{proposition}[theorem]{Proposition}
\newtheorem{conjecture}[theorem]{Conjecture}
\newtheorem{lemma}[theorem]{Lemma}
\theoremstyle{definition}
\newtheorem{definition}[theorem]{Definition}
\newtheorem{example}[theorem]{Example}
\newcommand\partitionfr[1]{
	\coordinate (prev) at (0,0);
	\foreach \dir in {#1}{
		\draw[help lines, line width = .25mm] (prev) -- +(0,1) coordinate (prev);
		\draw[help lines, line width = .25mm] (prev)+(0,-1) grid +(\dir,0);
	};
}
\newcommand\dyckpathshade[3]{
	start point, size, Dyck word (size x 2 booleans)
	\fill[blue]  (#1) rectangle +(#2,#2);
	\fill[white]
	(#1)
	\foreach \dir in {#3}{
		\ifnum\dir=0
		-- ++(1,0)
		\else
		-- ++(0,1)
		\fi
	} |- (#1);
}
\newcommand\dyckpath[3]{

	\draw[help lines, gray!60, thin] (#1) grid +(#2,#2);
	\draw[dashed] (#1) -- +(#2,#2);
	\coordinate (prev) at (#1);
	\foreach \dir in {#3}{
		\ifnum\dir=0
		\coordinate (dep) at (1,0);
		\else
		\coordinate (dep) at (0,1);
		\fi
		\draw[line width=3pt,] (prev) -- ++(dep) coordinate (prev);
	};
}
\newcommand\dyckpathmn[4]{
	\draw[gray!60, thin] (#1) grid +(#2,#3);
	\coordinate (prev) at (#1);
	\foreach \dir in {#4}{
		\ifnum\dir=0
		\coordinate (dep) at (1,0);
		\else
		\coordinate (dep) at (0,1);
		\fi
		\draw[line width=3pt,black] (prev) -- ++(dep) coordinate (prev);
	};
}
\newcommand\dyckpathmnnogrid[4]{
	\coordinate (prev) at (#1);
	\foreach \dir in {#4}{
		\ifnum\dir=0
		\coordinate (dep) at (1,0);
		\else
		\coordinate (dep) at (0,1);
		\fi
		\draw[line width=3pt,black] (prev) -- ++(dep) coordinate (prev);
	};
}
\renewcommand*{\eqref}[1]{%
  \hyperref[{#1}]{\textup{\tagform@{\ref*{#1}}}}%
}
\newdimen\qrr@tikz@sharp@z@
	\edef\pgf@marshal{\noexpand\pgfutil@in@{and}{\pgfgetarrowoptions{sharp >}}}%
	\edef\pgf@tempa{\pgfgetarrowoptions{sharp >}}
	\qrr@tikz@sharp@parse\pgfgetarrowoptions{sharp >}and-\pgfgetarrowoptions{sharp >}\@qrr@tikz@sharp@parse
	\let\qrr@tikz@sharp@max\pgfmathresult
	\pgfmathsetlength\pgf@xa{.5*\pgflinewidth * tan(\qrr@tikz@sharp@max)}%
	\edef\pgf@marshal{\noexpand\pgfutil@in@{and}{\pgfgetarrowoptions{sharp >}}}%
	\edef\pgf@tempa{\pgfgetarrowoptions{sharp >}}
	\qrr@tikz@sharp@parse\pgfgetarrowoptions{sharp >}and-\pgfgetarrowoptions{sharp >}\@qrr@tikz@sharp@parse
	\pgfmathsetlength\pgf@ya{.5*\pgflinewidth * tan(max(\pgf@tempa,\pgf@tempb,0))}%
	\pgfmathsetlength\pgf@xa{-.5*\pgflinewidth * tan(\pgf@tempa)}%
	\pgfmathsetlength\pgf@xb{-.5*\pgflinewidth * tan(\pgf@tempb)}%
	\edef\pgf@marshal{\noexpand\pgfutil@in@{and}{\pgfgetarrowoptions{sharp <}}}%
	\edef\pgf@tempa{\pgfgetarrowoptions{sharp <}}
\qrr@tikz@sharp@parse\pgfgetarrowoptions{sharp <}and-\pgfgetarrowoptions{sharp <}\@qrr@tikz@sharp@parse
	\let\qrr@tikz@sharp@max\pgfmathresult
	\pgfmathsetlength\pgf@xa{.5*\pgflinewidth * tan(\qrr@tikz@sharp@max)}%
	\edef\pgf@marshal{\noexpand\pgfutil@in@{and}{\pgfgetarrowoptions{sharp <}}}%
	\edef\pgf@tempa{\pgfgetarrowoptions{sharp <}}
\qrr@tikz@sharp@parse\pgfgetarrowoptions{sharp <}and-\pgfgetarrowoptions{sharp <}\@qrr@tikz@sharp@parse
	\pgfmathsetlength\pgf@ya{.5*\pgflinewidth * tan(max(\pgf@tempa,\pgf@tempb,0))}%
	\pgfmathsetlength\pgf@xa{-.5*\pgflinewidth * tan(\pgf@tempa)}%
	\pgfmathsetlength\pgf@xb{-.5*\pgflinewidth * tan(\pgf@tempb)}%
\def\qrr@tikz@sharp@parse#1and#2\@qrr@tikz@sharp@parse{\def\pgf@tempa{#1}\def\pgf@tempb{#2}}
\DeclareMathOperator{\area}{area}
\DeclareMathOperator{\dinv}{dinv}
\DeclareMathOperator{\sign}{sign}
\DeclareMathOperator{\maj}{maj}
\DeclareMathOperator{\comaj}{comaj}
\DeclareMathOperator{\revmaj}{revmaj}
\DeclareMathOperator{\revcomaj}{revcomaj}
\DeclareMathOperator{\Des}{Des}
\DeclareMathOperator{\asc}{asc}
\DeclareMathOperator{\Asc}{Asc}
\DeclareMathOperator{\weight}{weight}
\DeclareMathOperator{\Ht}{\widetilde{H}}
\DeclareMathOperator{\PR}{PR}
\DeclareMathOperator{\CR}{CR}
\DeclareMathOperator{\R}{R}
\DeclareMathOperator{\CC}{CC}
\DeclareMathOperator{\ith}{th}
\DeclareMathOperator{\bC}{ {\bf CC}}
\DeclareMathOperator{\LC}{ {LC}}
\DeclareMathOperator{\PF}{ {PF}}
\DeclareMathOperator{\WV}{ {WV}}
\DeclareMathOperator{\LW}{ {LW}}
\DeclareMathOperator{\spl}{ {split}}
\DeclareMathOperator{\join}{ {join}}
\DeclareMathOperator{\type}{ {type}}
\DeclareMathOperator{\SYT}{ {SYT}}
\DeclareMathOperator{\LPF}{ {LPF}}
\DeclareMathOperator{\PP}{\mathcal{P}}
\DeclareMathOperator{\LD}{LD}
\newcommand{\qbinom}[2]{\genfrac{[}{]}{0pt}{}{#1}{#2}}
\title{Delta and Theta Operator expansions}
\author{Alessandro Iraci}
\address{Universit\`a di Pisa \\ Dipartimento di Matematica}
\email{alessandro.iraci@unipi.it}
\author{Marino Romero}
\address{University of Pennsylvania \\ Department of Mathematics}
\email{mar007@sas.upenn.edu}
\begin{document}

\begin{abstract}
	We give an elementary symmetric function expansion for $M\Delta_{m_\gamma e_1}\Pi e_\lambda^{\ast}$ and $M\Delta_{m_\gamma e_1}\Pi s_\lambda^{\ast}$ when $t=1$ in terms of what we call $\gamma$-parking functions and lattice $\gamma$-parking functions.
	Here, $\Delta_F$ and $\Pi$ are certain eigenoperators of the modified Macdonald basis and $M=(1-q)(1-t)$.
	Our main results in turn give an elementary basis expansion at $t=1$ for symmetric functions of the form $M \Delta_{Fe_1} \Theta_{G} J$ whenever $F$ is expanded in terms of monomials, $G$ is expanded in terms of the elementary basis, and $J$ is expanded in terms of the modified elementary basis $\{\Pi e_\lambda^\ast\}_\lambda$.
	Even the most special cases of this general Delta and Theta operator expression are significant; we highlight a few of these special cases.
	We end by giving an $e$-positivity conjecture for when $t$ is not specialized, proposing that our objects can also give the elementary basis expansion in the unspecialized symmetric function.
\end{abstract}

\maketitle{}
\section{Introduction}

Delta and Theta operators, denoted by $\Delta_F$ and $\Theta_F$ for a choice of symmetric function $F$, are fundamental symmetric function operators in the theory of Macdonald polynomials.
Since their introduction, these operators have been shown to have incredible properties and connections to other areas of interest.
In introducing a brief history of these operators, we will point out some of these connections. For definitions of the symmetric functions discussed here, we refer the reader to Section \ref{Sectionsymmetricfunctions}.

Often, this area of study has three aspects. There is the symmetric function side, the representation theoretical side, and a combinatorial description. By giving Schur function expansions of the symmetric function side, one is able to give the multiplicities of irreducible representations in the representation theoretical side via the Frobenius map, which sends irreducible characters of the symmetric group to Schur functions:
Let $A^{\lambda}$ denote Young's irreducible representation of the symmetric group $S_n$ indexed by the partition $\lambda \vdash n$.
For any graded module
\begin{align*}
	V = \bigoplus_{\alpha} V_{\alpha} &  & \text{ with } &  & V_\alpha \simeq \bigoplus_{\lambda \vdash n} n^{\alpha}_\lambda A^\lambda,
\end{align*}
the graded Frobenius characteristic produces the symmetric function
\[
	\mathcal{F}(V) = \sum_{\lambda \vdash n} s_\lambda \sum_{\alpha} n_\lambda^\alpha Q^\alpha.
\]

On the other hand, when the combinatorial expansion of a symmetric function is Schur positive, it predicts the existence of a representation theoretical side. As we will describe here, when dealing with Macdonald polynomials, the representations associated to these expansions are often natural and important for a variety of areas of study.

As proved in \cite{Haiman-Vanishing-2002} and conjectured in \cite{Garsia-Haiman-qLagrange-1996}, $\Delta_{e_n} e_n$ gives the bigraded Frobenius characteristic for the space of $S_n$ coinvariants of the polynomial ring with two sets of commuting variables. More precisely, if $Y_n = y_1,\dots, y_n$ and $Z_n = z_1,\dots, z_n$ are two sets of commuting variables, then $\sigma \in S_n$ acts diagonally on the space of polynomials in $Y_n,Z_n$ by sending $y_i \mapsto y_{\sigma_i}$ and $z_i \mapsto z_{\sigma_i}$.
The space of diagonal coinvariants is given by the quotient
\[
	\mathcal{R}^{(2,0)} = \frac{\mathbb{C}[Y_n,Z_n]}{(\mathbb{C}[Y_n,Z_n]^{S_n}_+)},
\]
where $(\mathbb{C}[Y_n,Z_n]^{S_n}_+)$ is the ideal generated by $S_n$-invariants with no constant term. This space is $\mathbb{N}^2$ graded, and we can record the grading
by setting $Q^{(r,s)} = q^r t^s.$ Then Haiman's theorem states that
\[
	\Delta_{e_n} e_n = \mathcal{F} (\mathcal{R}^{(2,0)}).
\]

The symmetric function $\Delta_{e_n} e_n$ is most often denoted $\nabla e_n$, where $\nabla$ is the Bergeron-Garsia nabla operator defined in \cite{Bergeron-Garsia-ScienceFiction-1999}.
Haiman proves this equality through algebraic geometrical means, realizing this ring through the isospectral Hilbert scheme of points on the plane.  Hogancamp showed that the hook Schur functions in this symmetric function give the triply graded Khovanov-Rozansky
homology for $(n,n+1)$-torus knots. There is a more general statement involving $(n,nm \pm 1)$ torus knots, though we will not go into detail \cite{Hogancamp-2017}.

On the combinatorial side, there is the Shuffle theorem, conjectured in \cite{HHLRU-2005} and proved by Carlsson and Mellit \cite{Carlsson-Mellit-ShuffleConj-2018}. This conjecture stated that $\nabla e_n$ can be written as a sum over labeled Dyck paths.
Carlsson and Mellit in fact prove the compositional refinement conjectured in \cite{Haglund-Morse-Zabrocki-2012}.
Their methods introduced a Dyck Path Algebra. Mellit expanded this idea in order to prove the related Rational Shuffle theorem \cite{Mellit-Rational-2021}, and then showed that the  triply graded Khovanov-Rozansky homology for $(m,n)$-torus knots can be realized through the Elliptic Hall or Schiffman algebra \cite{Mellit-Torus-Knots-2017}.
On symmetric functions, this algebra can be generated by using the operators of multiplication by $e_1$ and $\Delta_{e_1}$. Theta operators can also be viewed as elements of this algebra.

The Delta conjecture \cite{Haglund-Remmel-Wilson-2018} gives a similar combinatorial description to the symmetric function $\Delta_{e_k} e_n$. Soon after, Zabrocki gave a corresponding $S_n$-module for this symmetric function, stating that
if we introduce a new set of anticommuting variables $T_n = \tau_1,\dots, \tau_n$, and set
\[
	\mathcal{R}^{(2,1)} =  \frac{\mathbb{C}[Y_n,Z_n, T_n]}{(\mathbb{C}[Y_n,Z_n,T_n]^{S_n}_+)},
\]
then
$\sum_{k} u^{n-k} \Delta_{e_{k-1}}'e_n$ gives $\mathcal{F}(\mathcal{R}^{(2,1)} )$, the triply graded Frobenius characteristic for the space of $S_n$ coinvariants in two sets of commuting variables and one set of anti-commuting variables.

The methods used by Carlsson and Mellit in the proof of the shuffle theorem relied on the compositional refinement of the statement; Theta operators were then introduced in \cite{DAdderio-Iraci-VandenWyngaerd-Theta-2021} in order to give a compositional refinement of the Delta conjecture, which ultimately led to a proof of the compositional Delta theorem \cite{DAdderio-Mellit-Compositional-Delta-2020}.
Most recently, the extended Delta conjecture was also proved in \cite{Blasiak-Haiman-Morse-Pun-Seeling-Extended-Delta-2021}, giving the combinatorial description for $\Delta_{h_a} \Delta_{e_{k-1}}' e_n$. This is realized through a connection to $GL_m$ characters and the $LLT$ polynomials of \cite{Lascoux-Leclerc-Thibon-1997}.

If we introduce yet another set of anticommuting variables and let $\mathcal{R}^{(2,2)}$ be the $S_n$ coinvariants with two sets of commuting and two set of anticommuting variables, then
it was also conjectured in \cite{DAdderio-Iraci-VandenWyngaerd-Theta-2021} that
\begin{equation}
	\label{eq:22character}
	\mathcal{F}(\mathcal{R}^{(2,2)}) = \sum_{r,s \geq 0} u^r v^s \Theta_{e_r} \Theta_{e_ s} \nabla e_{n-r-s},
\end{equation}
meaning the Frobenius characteristic of $\mathcal{R}^{(2,2)}$ is given via Theta operators.
The purely fermionic case $\mathcal{R}^{(0,2)}$, involving only the portion with anticommuting variables (obtained by setting $q=t=0$ in \eqref{eq:22character}) has recently been proved in \cite{Iraci-Rhoades-Romero-2022}. For the $\mathcal{R}^{(1,1)}$ case (found by setting $t=u=0$ in \eqref{eq:22character}) the graded dimension of the coinvariant space with one set of commuting variables and one set of anticommuting variables has been shown in \cite{Rhoades-Wilson-2023} to agree with the conjectured formula.
\\

In general, the Schur function expansion of $\Delta_{e_k} e_n$ is unknown, and similarly, $\Theta_{e_\lambda} = \Theta_{e_{\lambda_1}} \cdots \Theta_{e_{\lambda_{\ell(\lambda)}}}  $ is yet to be fully understood at the combinatorial level. For the first main result of this paper, we give a combinatorial expansion for symmetric functions of the form \[ M \Delta_{F e_1} \Theta_{e_\lambda} G \qquad \qquad (\text{where}~M = (1-q)(1-t)), \] when $t=1$, in terms of the elementary symmetric function basis (precise definitions are given in Section \ref{Sectionsymmetricfunctions}). An elementary basis expansion gives a Schur expansion by simply using the Pieri rule.
If the original symmetric function is positive in some basis, then setting $t=1$ (or $q=1$) leaves the ungraded multiplicities intact. Therefore, giving an expansion at $t=1$ would predict the combinatorial objects enumerated by these symmetric functions without the specialization.
Even more, we find that certain symmetric functions are not Schur positive, yet become positive in the elementary basis when $t=1$. And even more surprising, we have Conjecture \ref{conjecture-epositivity} which predicts that this symmetric function is $e$-positive after substituting $q=1+u$ (rather than substituting $t=1$) for suitable $F$ and $G$.

The main strategy of our work is to expand the symmetric function, when $t=1$, as a series in $q$. One of the amazing aspects of this method, found in \cite{Hicks-Romero-2018}, is the use of the combinatorial formula for forgotten symmetric functions and their principal evaluation. The terms in the series are sums of certain signed combinatorial objects. After applying a weight-preserving, sign-reversing involution, we are able to get a finite number of positive fixed points, which bijectively correspond to some set of labeled polyominoes. The end result is found by adjusting the polyomino picture to get an expansion in terms of what we call $\gamma$-parking functions:
\begin{theorem}
	\label{Theorem1}
	For any two partitions $\lambda$ and $\gamma$, there is a family of labeled polyominoes $\PF^{\gamma}_{\lambda}$, called $\gamma$-parking functions of content $\lambda$, and a statistic $\area$ giving
	\begin{equation}
		\left. \Delta_{m_\gamma} M \Delta_{e_1} \Pi e_{\lambda}^\ast \right\rvert_{t=1} = \sum_{p \in \PF^{\gamma}_{\lambda} } q^{\area(p)} e_{\eta(p)}.
	\end{equation}
\end{theorem}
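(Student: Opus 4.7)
The plan is to adapt the series-in-$q$ method of \cite{Hicks-Romero-2018}, generalized to accommodate the extra $\Delta_{m_\gamma}$ factor. Since $\Delta_F$ and $\Pi$ are eigenoperators of the modified Macdonald basis $\{\Ht_\mu\}$ with eigenvalues expressible in terms of the arm/leg statistics of $\mu$, the first step is to expand $e_\lambda^\ast$ in the $\Ht_\mu$ basis and apply the eigenvalue formulas termwise. Multiplication by $M = (1-q)(1-t)$ clears denominators, producing a polynomial expression whose $t=1$ specialization is then the object of study.

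Second, following \cite{Hicks-Romero-2018}, I would rewrite this expression as a formal series in $q$ by exploiting the combinatorial formula for forgotten symmetric functions and their principal specializations. This converts the identity into a sum of signed, weighted combinatorial objects (polyomino-like diagrams decorated with fillings coming from the expansion of $m_\gamma$, from the $\Pi$-coefficients, and from the $e_\lambda^\ast$-expansion). The total weight should formally equal $\Delta_{m_\gamma} M \Delta_{e_1} \Pi e_\lambda^\ast |_{t=1}$, expanded as a signed sum of elementary symmetric functions.

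Third, the core step is constructing a weight-preserving, sign-reversing involution on this signed collection. One typically scans cells in a prescribed order and toggles the first ``bad'' local configuration, producing a matching of opposite-signed objects with identical area and shape statistics. The plan is for this involution to generalize the one in \cite{Hicks-Romero-2018}, with an additional rule governing the extra rows/columns coming from $\Delta_{m_\gamma}$, so that the uncancelled fixed points are exactly labeled polyominoes whose shape data is forced to match $\gamma$ and whose labels are forced to match $\lambda$. After reshaping these fixed diagrams in the manner indicated in the paper's introduction, they should be in bijection with $\PF^{\gamma}_\lambda$, with the surviving $q$-power reading off $\area(p)$ and the row-lengths (or an equivalent readable statistic) giving $\eta(p)$.

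The main obstacle will be designing the sign-reversing involution: the new $m_\gamma$ factor introduces additional cells that interact nontrivially with the pivoting rule of the Hicks--Romero involution, and one must verify both the involutive property and that the fixed points admit a clean description. A closely related subtlety is identifying the correct ``polyomino $\to$ $\gamma$-parking function'' reshaping so that the resulting area statistic is manifestly the one used to define $\PF^\gamma_\lambda$; this may force the definition of $\PF^\gamma_\lambda$ to be tailored precisely to what the fixed points produce, so the technical heart of the argument is choosing the matching rule and the bijection in a mutually compatible way.
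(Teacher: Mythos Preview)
Your proposal is correct and follows essentially the same route as the paper: Macdonald expansion and $t=1$ specialization (Section~\ref{sec:preliminary-manipulations}), the forgotten symmetric function principal evaluation yielding signed column-composition tableaux, a sign-reversing involution, and finally a bijection to $\gamma$-parking functions. The only place your description diverges slightly is the mechanics of the involution: rather than ``scanning cells and toggling the first bad local configuration,'' the paper's $\psi$ operates at the level of a sequence of labeled column-composition tableaux via a \emph{split/join} pair (split the first tableau that has a vertical bar, or join the first adjacent pair that satisfies an explicit height inequality), and the $m_\gamma$ contribution is absorbed as an extra word $l$ under each base row whose effect on the weight is a ``looks right'' statistic that behaves uniformly under split/join --- so the feared nontrivial interaction with the Hicks--Romero rule turns out to be benign.
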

This solves the problem of computing $\Delta_{F e_1} M \Theta_{e_\lambda} G $ whenever $F$ is expanded in terms of the monomial basis and $G$ is expanded in terms of the modified $e$-basis $\{\Pi e_\lambda^{\ast}\}_\lambda$.
Using the same methods that prove Theorem \ref{Theorem1}, we also show

\begin{theorem}
	\label{Theorem2}
	For any two partitions $\lambda$ and $\gamma$, there is a family of labeled polyominoes $\LPF^{\gamma}_{\lambda}$, called lattice $\gamma$-parking functions of content $\lambda$, and a statistic $\area$ giving
	\begin{equation}
		\left. \Delta_{m_\gamma} M \Delta_{e_1}   \Pi s_{\lambda}^\ast \right\rvert_{t=1} =  \sum_{p \in \LPF^{\gamma}_{\lambda'} } q^{\area(p)} e_{\eta(p)}.
	\end{equation}
\end{theorem}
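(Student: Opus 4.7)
The plan is to deduce Theorem \ref{Theorem2} from Theorem \ref{Theorem1} by expanding the Schur function in the elementary basis via the dual Jacobi--Trudi identity and then cancelling the resulting signed terms through a weight-preserving, sign-reversing involution on labeled polyominoes. Using
\[
	s_\lambda = \sum_{\sigma \in S_\ell} \sign(\sigma)\, \prod_{i=1}^\ell e_{\lambda'_i - i + \sigma(i)},
\]
where $\ell = \ell(\lambda')$, $e_0 = 1$, and $e_k = 0$ for $k < 0$, applying the star operation termwise and invoking linearity of $\Delta_{m_\gamma} M \Delta_{e_1} \Pi$, Theorem \ref{Theorem1} gives the signed identity
\[
	\left.\Delta_{m_\gamma} M \Delta_{e_1} \Pi s_\lambda^\ast\right\rvert_{t=1} = \sum_{\sigma \in S_\ell} \sign(\sigma) \sum_{p \in \PF^\gamma_{\mu(\sigma)}} q^{\area(p)} e_{\eta(p)},
\]
where $\mu(\sigma)$ is the weak composition with $i$th part $\lambda'_i - i + \sigma(i)$, terms with a negative part are omitted, and $\PF^\gamma_{\mu(\sigma)}$ is interpreted by sorting $\mu(\sigma)$ into a partition (since $e_{\mu(\sigma)}^\ast$ depends only on the underlying multiset).

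The main step is then to construct an area- and $\eta$-preserving, sign-reversing involution $\iota$ on the signed set $\bigsqcup_\sigma \{(p, \sigma) : p \in \PF^\gamma_{\mu(\sigma)}\}$ whose fixed points are exactly the pairs $(p, \mathrm{id})$ with $p \in \LPF^\gamma_{\lambda'}$. I would model $\iota$ on the Lindström--Gessel--Viennot/Bender--Knuth argument for Jacobi--Trudi positivity: view the labeled rows of the polyomino as a tuple of lattice paths, scan for the first place where the Yamanouchi/lattice condition on labels is violated, identify the pair of rows responsible, and exchange suitable portions of these two rows while composing $\sigma$ with the corresponding simple transposition. The lattice condition defining $\LPF^\gamma_{\lambda'}$ should be formulated precisely so that the fixed points of $\iota$ at $\sigma = \mathrm{id}$ are exactly the objects satisfying it.

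The key obstacle is verifying that the row-swap is compatible with the two invariants that must be preserved: the exponent $\area(p)$ of $q$ and the output composition $\eta(p)$ indexing $e_{\eta(p)}$. Exchanging portions of two rows in a labeled polyomino can a priori alter how column-tops aggregate into $\eta(p)$ and can shift cells in a way that changes local area contributions. Pinning down the correct combinatorial formulation of the lattice condition---most likely in terms of a column-reading word or a row-by-row inequality on labels---so that the row-swap is simultaneously area-preserving, $\eta$-preserving, and sign-reversing off the fixed points is where the combinatorial work concentrates. Once $\iota$ is in place, the signed cancellation is automatic and reduces the sum to the $\sigma = \mathrm{id}$ lattice objects, producing the claimed expansion.
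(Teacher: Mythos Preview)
Your proposal is not so much a proof as a plan with its central step left undone: you never construct the sign-reversing involution $\iota$, and you explicitly flag the two places where it must be checked (preservation of $\area$ and of $\eta$) as unresolved obstacles. These are not bookkeeping details. The $e$-composition $\eta(p)$ is determined by the ascent set of the label word together with the top path; any label swap that changes the multiplicities of $i$ and $i+1$ will in general change $\Asc(w)$ and hence $\eta(p)$, so a naive Bender--Knuth move does not obviously preserve the right invariants. There is also a bookkeeping issue you brush past: after sorting $\mu(\sigma)$ into a partition you lose the identification of labels with rows of the Jacobi--Trudi matrix, which is exactly the data an LGV-style swap needs to act on. Until the involution is actually written down and the two invariances are verified, this is a gap.

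The paper takes a completely different route that avoids any signed cancellation at the polyomino level. It reruns the whole Theorem~\ref{Theorem1} argument from scratch with a different family of label words. Instead of using $\langle h_\lambda, \Ht_\mu \rangle$ and arbitrary words in $\WV(\lambda,\beta)$, it uses $\langle s_{\lambda'}, \Ht_\mu \rangle$, whose $t=1$ specialization is the modified Kostka number $\tilde K_{\lambda',\mu}(q,1)=\sum_{T\in\SYT(\lambda')}q^{\comaj(T,\mu)}$. Encoding $T$ by its lattice reading word identifies this sum with $\sum_{\vec w\in \LW(\lambda',\beta)}q^{\revmaj(\vec w)}$. Since the statistic is still $\revmaj$ (hence still ``looks right''), the entire machinery of Sections~6--9 applies verbatim with $\WV(\lambda,\beta)$ replaced by $\LW(\lambda',\beta)$, and the fixed points are by construction lattice $\gamma$-parking functions. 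No Jacobi--Trudi determinant and no secondary involution are needed.
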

This raises the interesting problem of finding a monomial expansion for the same expression, in the same fashion as the one given in \cite{DAdderio-Iraci-LeBorgne-Romero-VandenWyngaerd-2022} in terms of tiered trees.

\section{Combinatorial definitions}
\label{gammaparkingfunctions}

In this section we aim to introduce the combinatorial objects that will give us the symmetric function expansions we are interested in.

\subsection{Words}

\begin{definition}
	A word of length $r$ is an element $w = (w_1, \dots, w_r) \in \mathbb{N}^r$. We denote the length by $\ell(w) = r$ and the size by $\lvert w \rvert = \sum_i w_i$.
\end{definition}

Let $w$ be a word of length $r$. We define the \emph{descent set} as $\Des(w) \coloneqq \{ 1 \leq i < r \mid w_i > w_{i+1} \}$, and the \emph{ascent set} as $\Asc(w) \coloneqq \{ 1 \leq i < r \mid w_i < w_{i+1} \}$.

We have the following statistics.

\begin{align*}
	 & \maj(w)    &  & = \sum_{i \in \Des(w)} i, \qquad \qquad     &  & \comaj(w)    &  & = \sum_{i \in \Des(w)} (n-i), \\
	 & \revmaj(w) &  & = \sum_{i \in \Asc(w)} (n-i), \qquad \qquad &  & \revcomaj(w) &  & = \sum_{i \in \Asc(w)} i.
\end{align*}

Note that $\revmaj$ and $\revcomaj$ actually are the $\maj$ and $\comaj$ of the reverse word, hence the name.

Let $m_i(w)$ be the number of indices $j$ such that $w_j = i$, that is, $m_i(w)$ is the multiplicity of $i$ in $w$. We denote the multiplicity type of $w$ as $m(w) = 0^{m_0(w)}1^{m_1(w)} 2^{m_2(w)} \cdots$. If $w \in \mathbb{N}^r_+$ (it has no $0$ entries), then we call it a composition and write $w \vDash \lvert w \rvert$.

There is a class of words that is of special interest to us.

\begin{definition}
	A \emph{lattice word} is a word $w = (w_1, \dots, w_r) \in \mathbb{N}_+^r$ such that, for all $1 \leq i, j \leq r$, we have \[
		m_{j+1}(w_1,\dots,w_i) \leq m_j(w_{1},\dots, w_{i}), \] that is, a word such that every prefix has at least as many $1$s as $2$s, at least as many $2$s as $3$s, and so on.
\end{definition}

Denote by $R(w)$ the set of all words $\alpha = (\alpha_1, \dots, \alpha_r)$ whose entries can be rearranged to give $w$, or $m(\alpha) = m(w)$.  If $\alpha_1 \geq \alpha_2 \geq \cdots \geq \alpha_r > 0$, then $\alpha$ is a partition, written $\alpha \vdash \lvert \alpha \rvert$.

It will be convenient to write a sequence of words $\vec{w}= (w^1,\dots, w^r)$, with $w^i \in \mathbb{N}^{r_i}$, as a vector. The type of $\vec{w}$, denoted  by $m(\vec{w})$, is the multiplicity type of the concatenation $w^1 \cdots  w^r = (w^1_1, w^1_2, \dots, w^2_1, w^2_2, \dots, \dots).$

We define the sets of \emph{word vectors of length $\beta$ and content $\alpha$}, \emph{composition vectors of size $\beta$ rearranging to $\alpha$}, and \emph{partition vectors of size $\beta$ rearranging to $\alpha$} as
\begin{align*}
	\WV(\alpha,\beta) & = \{ \vec{w} \mid \ell(w^i) = \beta_i ~ \text{ and } ~m(\vec{w}) = 0^{|\beta|-\ell(\alpha)} 1^{\alpha_1} 2^{\alpha_2} \cdots\} \\
	\CR(\alpha,\beta) & = \{ \vec{w} \mid w^i \vDash \beta_i ~ \text{ and } ~ w^1\cdots w^{\ell(w) } \in R(\alpha) \}                                  \\
	\PR(\alpha,\beta) & = \{ \vec{w} \mid w^i \vdash \beta_i ~ \text{ and } ~ w^1\cdots w^{\ell(w)} \in R(\alpha) \}
\end{align*}

The first is the set of sequences of words where the collective multiplicity of $i$ is $\alpha_i$ and sequence $j$ has length $\beta_j$. If $\ell(\alpha) < \lvert \beta \rvert$, then it is impossible to do this without allowing $0$ entries, of which there must be $|\beta|-\ell(\alpha)$.
The second set is the sequence of compositions whose sizes are determined by $\beta$ and whose parts collectively rearrange to $\alpha$; and the last set is the set of sequences of partitions whose sizes are determined by $\beta$ and whose collective union of parts rearranges to $\alpha$.

We represent partitions by their Young diagram. For a partition $\mu$ and a cell $c \in \mu$, we let $a(c),l(c), a'(c), $ and $l'(c)$ denote the arm, leg, coarm, and coleg of the cell. This gives the number of cells in $\mu$ strictly to the right, above, to the left, and below of $c$, respectively. See Figure~\ref{fig:(1)-armslegs} for an example.

\begin{figure}[!ht]
	\centering
	\begin{tikzpicture}[scale=1]
		\partitionfr{4,4,3}
		\draw (2.5,1.5) node {\Large$c$};
		\draw (3.5,1.5) node {\Large$a$};
		\draw (.5,1.5) node {\Large$a'$};
		\draw (1.5,1.5) node {\Large$a'$};
		\draw (2.5,.5) node {\Large$l'$};
		\draw (2.5,2.5) node {\Large$l$};
	\end{tikzpicture}
	\caption{The partition $(3,3,2)$, where we highlight the cells in the arm, leg, coarm, and coleg of the cell $c = (3,2)$ with $a,l,a',$ and $l'$, respectively}
	\label{fig:(1)-armslegs}
\end{figure}

\subsection{\texorpdfstring{$\gamma$}{gamma}-Dyck paths}
We need to recall this classical definition.

\begin{definition}
	\label{def:polyomino}
	A \emph{parallelogram polyomino} of size $m \times n$ is a pair of lattice paths $(P,Q)$ from $(0,0)$ to $(m,n)$, consisting of unit North and East steps, such that $P$ (the \emph{top path}) lies always strictly above $Q$ (the \emph{bottom path}), except on the endpoints.
\end{definition}

The \emph{area} of a parallelogram polyomino of size $m \times n$ is defined as \[ \area(P,Q) = (\#\text{ of lattice cells between $P$ and $Q$}) - (m+n-1). \] Since the two paths $P$ and $Q$ do not touch between the endpoints, $m+n-1$ is the minimal number of unit cells between them.

\begin{figure}[!ht]
	\centering
	\begin{tikzpicture}[scale = 0.7]
		\draw[draw=none, use as bounding box] (-1, -1) rectangle (16,9);
		\draw[gray!60, thin] (0,0) grid (15,8);

		\filldraw[yellow, opacity=0.3] (0,0) -- (1,0) -- (2,0) -- (3,0) -- (4,0) -- (4,1) -- (4,2) -- (5,2) -- (5,3) -- (6,3) -- (7,3) -- (8,3) -- (9,3) -- (9,4) -- (10,4) -- (11,4) -- (11,5) -- (11,6) -- (12,6) -- (13,6) -- (14,6) -- (14,7) -- (15,7) -- (15,8) -- (14,8) -- (13,8) -- (13,7) -- (12,7) -- (11,7) -- (10,7) -- (9,7) -- (8,7) -- (7,7) -- (7,6) -- (7,5) -- (6,5) -- (5,5) -- (4,5) -- (3,5) -- (2,5) -- (2,4) -- (2,3) -- (1,3) -- (0,3) -- (0,2) -- (0,1) -- (0,0);

		\draw[red, sharp <-sharp >, sharp angle = -45, line width=1.6pt] (0,0) -- (0,1) -- (0,2) -- (0,3) -- (1,3) -- (2,3) -- (2,4) -- (2,5) -- (3,5) -- (4,5) -- (5,5) -- (6,5) -- (7,5) -- (7,6) -- (7,7) -- (8,7) -- (9,7) -- (10,7) -- (11,7) -- (12,7) -- (13,7) -- (13,8) -- (14,8) -- (15,8);

		\draw[green, sharp <-sharp >, sharp angle = 45, line width=1.6pt] (0,0) -- (1,0) -- (2,0) -- (3,0) -- (4,0) -- (4,1) -- (4,2) -- (5,2) -- (5,3) -- (6,3) -- (7,3) -- (8,3) -- (9,3) -- (9,4) -- (10,4) -- (11,4) -- (11,5) -- (11,6) -- (12,6) -- (13,6) -- (14,6) -- (14,7) -- (15,7) -- (15,8);
	\end{tikzpicture}
	\caption{A parallelogram polyomino with area $20$}
\end{figure}

We can now introduce our new objects.

\begin{definition}
	Let $\gamma \vdash m$. A \emph{$\gamma$-Dyck path} of size $n$ is a parallelogram polyomino of size $(m + n + 1) \times n$ such that the bottom path does not have two consecutive North steps, and if $\alpha_i$ is the number of East step of the bottom path in the $i^{\ith}$ row, then $(\alpha_1 - 1, \alpha_2, \dots, \alpha_n)$ rearranges to $\gamma + 1^n = (\gamma_1+1,\dots,\gamma_{\ell(\gamma)}+1,1,\dots,1)$.
\end{definition}

Notice that $\varnothing$-Dyck paths are essentially the same thing as classical Dyck paths, as the condition imposed by the bottom path is that the top path lies always weakly above the diagonal $x=y$.

\begin{definition}
	A \emph{labeled $\gamma$-Dyck path}, is a $\gamma$-Dyck path in which each North step is assigned a positive integer label such that consecutive North steps are assigned strictly increasing labels.
	A labeled $\gamma$-Dyck path will be denoted as a triple $p = (P,Q,w)$, where $P$ is the top path, $Q$ is the bottom path, and $w$ is the word formed by the labels when read from bottom to top.
	The \emph{content} of a labeled $\gamma$-Dyck path is the weak composition $\alpha \vDash_w n$ whose parts $\alpha_i$ give the number of $i$'s appearing in the labeling (or $m(w) =0^{n-\ell(\alpha)}1^{\alpha_1}2^{\alpha_2}\dots$).
	A $\gamma$-parking function is a labeled $\gamma$-Dyck path of content $1^n$. For our convenience, we will also refer to labeled $\gamma$-Dyck paths of content $\alpha$ as $\gamma$-parking functions of content $\alpha$, and denote them by $\PF_\alpha^\gamma$.
\end{definition}

\begin{figure}[!ht]
	\centering
	\includegraphics{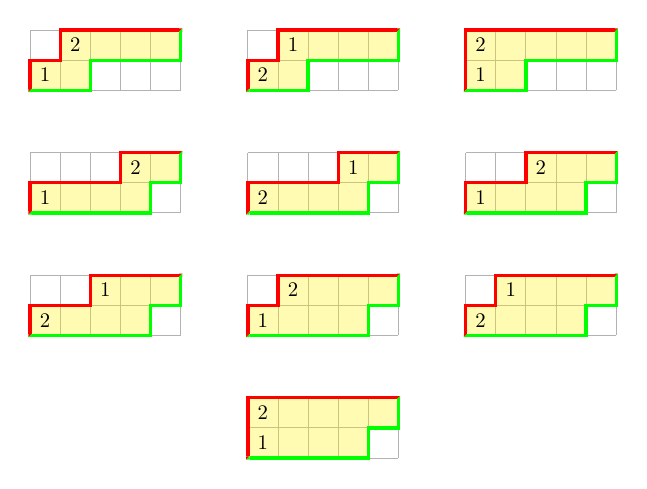}
	\caption{The set of $(2)$-parking functions of height $2$}
	\label{fig:(2)-parking-functions}
\end{figure}

\begin{definition}
	A \emph{lattice $\gamma$-Dyck path} is a labeled $\gamma$-Dyck path in which the sequence of labels, read bottom to top, is a lattice word. Notice that the content of a lattice word is necessarily a partition. As above, for our convenience, we will also refer to lattice $\gamma$-Dyck paths with content $\lambda$ as lattice $\gamma$-parking functions with content $\lambda$, and denote them by $\LPF_\lambda^\gamma$.
\end{definition}

\begin{definition}
	\label{def:e-composition}
	The \emph{$e$-composition} $\eta(p)$ of a labeled $\gamma$-Dyck path $p = (P,Q,w)$ is defined as follows: let $\overline{P}$ be the path obtained from $P$ by removing the first East step after the $i^{\ith}$ North step for every $i \not \in \Asc(w)$; $\eta(p)$ is the composition whose parts are the lengths of the maximal sequences of consecutive North steps appearing in $\overline{P}$, from the bottom to the top.
	An example is given in Figure~\ref{fig:e-comp}.
\end{definition}

\begin{figure}[!ht]
	\centering
	\begin{align*}
		\scalebox{.5}{
			\begin{tikzpicture}[scale=1]
				\draw[gray!60, thin] (0,0) grid (13,7);
				\filldraw[yellow, opacity=0.3] (0,0) -- (1,0) -- (2,0) -- (3,0) -- (3,1) -- (4,1) -- (4,2) -- (5,2) -- (6,2) -- (7,2) -- (7,3) -- (8,3) -- (9,3) -- (9,4) -- (10,4) -- (10,5) -- (11,5) -- (12,5) -- (12,6) -- (13,6) -- (13,7) -- (6,7) -- (5,7) -- (5,6) -- (4,6) -- (3,6) -- (3,5) -- (3,4) -- (2,4) -- (1,4) -- (1,3) -- (0,3) -- (0,2) -- (0,1) -- (0,0);
				\draw[red, sharp <-sharp >, sharp angle = -45, line width=1.6pt] (0,0) -- (0,1) -- (0,2) -- (0,3) -- (1,3) -- (1,4) -- (2,4) -- (3,4) -- (3,5) -- (3,6) -- (4,6) -- (5,6) -- (5,7) -- (6,7) -- (13,7);
				\draw[green, sharp <-sharp >, sharp angle = 45, line width=1.6pt] (0,0) -- (1,0) -- (2,0) -- (3,0) -- (3,1) -- (4,1) -- (4,2) -- (5,2) -- (6,2) -- (7,2) -- (7,3) -- (8,3) -- (9,3) -- (9,4) -- (10,4) -- (10,5) -- (11,5) -- (12,5) -- (12,6) -- (13,6) -- (13,7);
				\node at (0.5,0.5) {\LARGE$1$};
				\node at (0.5,1.5) {\LARGE$4$};
				\node at (0.5,2.5) {\LARGE$5$};
				\node at (1.5,3.5) {\LARGE$3$};
				\node at (3.5,4.5) {\LARGE$1$};
				\node at (3.5,5.5) {\LARGE$2$};
				\node at (5.5,6.5) {\LARGE$5$};
				\node at (0.5, 3) {\Huge $\times$};
				\node at (1.5, 4) {\Huge $\times$};
				\node at (5.5, 7) {\Huge $\times$};
			\end{tikzpicture}
		}
		 &  &
		\scalebox{.5}{
			\begin{tikzpicture}[scale=1]
				\draw (0,0) node { };
				\draw (0,3.5) node {\LARGE$\longrightarrow$};
			\end{tikzpicture}
		}
		 &  &
		\scalebox{.5}{
			\begin{tikzpicture}[scale=1]
				\draw[gray!60, thin] (0,0) grid (6,7);
				\draw[red, sharp <-sharp >, line width=1.6pt] (0,0) -- (0,1) -- (0,2) -- (0,3) -- (0,4) -- (1,4) -- (1,5) -- (1,6) -- (2,6) -- (3,6) -- (3,7) -- (4,7);
				\draw[red, dashed, line width=1.6pt] (4,7) -- (6,7);
				\node at (0.5,0.5) {\LARGE$1$};
				\node at (0.5,1.5) {\LARGE$4$};
				\node at (0.5,2.5) {\LARGE$5$};
				\node at (0.5,3.5) {\LARGE$3$};
				\node at (1.5,4.5) {\LARGE$1$};
				\node at (1.5,5.5) {\LARGE$2$};
				\node at (3.5,6.5) {\LARGE$5$};
			\end{tikzpicture}
		}
	\end{align*}
	\caption{The algorithm to determine the $e$-composition of a labeled $\gamma$-Dyck path. In this case, $\eta(p) = (4,2,1)$}
	\label{fig:e-comp}
\end{figure}

\section{Symmetric function preliminaries}
\label{Sectionsymmetricfunctions}

The standard reference for Macdonald polynomials is Macdonald's book \cite{Macdonald-Book-1995}. For some reference on modified Macdonald polynomials, plethystic substitution, and Delta operators, we have \cite{Haglund-Book-2008} and \cite{Bergeron-Garsia-Haiman-Tesler-Positivity-1999}. As a reference for Theta operators, we have \cite{DAdderio-Iraci-VandenWyngaerd-Theta-2021} and \cite{DAdderio-Romero-Theta-Identities-2020}.

From here on, we set $M=(1-q)(1-t)$, and for any $\mu$ define
\begin{align*}
	\Pi_\mu  = \prod_{c \in \mu/(1)} & 1-q^{a'(c)} t^{l'(c)}, \hspace{2.5cm}
	B_\mu = \sum_{c \in \mu} q^{a'(\mu)} t^{l'(\mu)},                                                                                   \\
	                                 & w_\mu = \prod_{c \in \mu} \left(q^{a(c)} -t^{l(c)+1} \right) \left(t^{l(c)} -q^{a(c)+1} \right).
\end{align*}

Recall the ordinary Hall scalar product gives the orthogonality relation \[ \langle s_\lambda, s_\mu \rangle =  \chi(\lambda = \mu), \] where $\chi(A) = 1$ if $A$ is true, and $0$ otherwise. The $\ast$-scalar product may be given by setting for any two symmetric functions $F$ and $G$, \[ \langle F, G \rangle_\ast= \langle F, (\omega G) [M X] \rangle \] where $\omega$ is the algebra isomorphism on symmetric functions defined by $\omega(e_n) = h_n$. Note that $\omega$ is also an isometry and a Hopf algebra antipode (hence an involution).

We can now state the orthogonality relations on the modified Macdonald basis: \[ \langle \Ht_\lambda  , \Ht_\mu \rangle_\ast = w_\mu \chi(\lambda = \mu). \]

The Delta operators are eigenoperators of the modified Macdonald basis indexed by symmetric functions, defined by setting $\Delta_{F} \Ht_\mu  = F[B_\mu] \Ht_\mu$ \cite{Bergeron-Garsia-Haiman-Tesler-Positivity-1999}. For any symmetric functions $F$ and $G$, let \[ \widetilde{\Delta}_F G  = \left. \Delta_F G \right\rvert_{t=1}. \]

On the space of symmetric functions with coefficients in $q$, the operator $\widetilde{\Delta}_F$ can be defined by setting \[ \widetilde{\Delta}_F h_\mu\left[ \frac{X}{1-q} \right] = F\left[ \sum_{i=1}^{\ell(\mu)} [\mu_i]_q \right] h_\mu \left[ \frac{X}{1-q} \right]. \]

This statement follows from the fact that the modified Macdonald basis specializes as follows: \[ \Ht_\mu[X;q,1] = (q;q)_\mu h_\mu \left[ \frac{X}{1-q} \right], \] where \[ (q;t)_r = (1-q)(1-qt) \cdots (1-qt^{r-1}) \] is the $q$-Pochhammer symbol, and $(q;q)_\mu = (q;q)_{\mu_1} \cdots (q;q)_{\mu_{\ell(\mu)}}$.

To define Theta operators, we must define another eigenoperator of the modified Macdonald basis. Let $\Pi$ be the linear operator defined by setting $\Pi \Ht_\mu = \Pi_\mu \Ht_\mu.$ For any symmetric function $F$, let \[ F^\ast[X] = F\left[ \frac{X}{M} \right]. \] Then we can define Theta operators by setting \[ \Theta_F G =  \Pi F^\ast \Pi^{-1} G. \]

It has become apparent recently, that in order for Theta operator identities to hold in generality, we need the convention that $\Theta_F G = 0$ whenever the degree of $G$ is $0$ and the degree of $F$ is larger than $0$. However, we would like to study the symmetric function $\Delta_{e_1} M\Theta_{e_\lambda} \Pi e_\mu^\ast$, which, without this convention, would look like $M \Delta_{e_1} \Theta_{e_\lambda e_\mu}(1)$, where the $\Delta_{e_1}$ is needed to get positive expressions.
In the end, it is convenient to define the symmetric function operator $\Xi$ by setting \[ \Xi F = M \Delta_{e_1} \Pi F^{\ast}[X]. \]

Our first goal will be to give an elementary basis expansion of  $\widetilde{\Delta}_{m_\gamma} \Xi e_{\lambda}$.

\section{Preliminary manipulations and specializations}
\label{sec:preliminary-manipulations}

The first step in studying $\Xi e_\lambda$, for $\lambda \vdash n$, is to note that if we want to expand in terms of the modified Macdonald basis, we first have
\[ e_{\lambda}^\ast = \sum_{\mu \vdash n} \frac{\Ht_\mu}{w_\mu} \langle e_{\lambda}^\ast , \Ht_\mu \rangle_\ast = \sum_{\mu \vdash n} \frac{\Ht_\mu}{w_\mu} \langle h_{\lambda} , \Ht_\mu \rangle, \]
where in the last step we went from the $\ast$-scalar product to the ordinary Hall scalar product. Applying the list of operators, we then have that
\[ \Delta_{m_\gamma} M \Delta_{e_1} \Pi e_{\lambda}^\ast   = \sum_{\mu \vdash n} \frac{ M B_\mu \Pi_\mu}{w_\mu}   \langle  h_{\lambda}  , \Ht_\mu \rangle m_\gamma[B_\mu] \Ht_\mu. \]

We will break up these summation terms by analyzing each of the three components, specializing $t$ to $1$ for each one individually. For the first component, we first separate the product into two parts:
\begin{align*}
	\frac{MB_\mu \Pi_\mu}{w_\mu} & = \frac{ (1-q) B_\mu  \prod_{ \substack{ c\in \mu    \\ a'(c) \neq 0}} (1-q^{a'(c)}t^{l'(c)})}
	{\prod_{\substack {c\in \mu                                                         \\ a(c) \neq 0}} (q^{a(c)} -t^{l(c)+1}) \prod_{c\in \mu  } (t^{l(c)} -q^{a(c)+1}) } \\
	                             & \quad \times \frac{(1-t) \prod_{ \substack{ c\in \mu \\ a'(c) = 0}} (1-q^{a'(c)}t^{l'(c)})  }{\prod_{\substack{c\in \mu \\ a(c) = 0}} (q^{a(c)} -t^{l(c)+1}) }.
\end{align*}
The terms in the left factor of the product can be evaluated at $t=1$ to give
\begin{align*}
	 & \left. \frac{ (1-q) B_\mu  \prod_{ \substack{c \in \mu                             \\ a'(c) \neq 0}} (1-q^{a'(c)}t^{l'(c)}) }
	{\prod_{\substack {c \in \mu                                                          \\ a(c) \neq 0}} (q^{a(c)} -t^{l(c)+1}) \prod\limits_{c \in \mu} (t^{l(c)} -q^{a(c)+1}) } \right\rvert_{t=1} \\
	 & \quad = \frac{ (1-q) \sum_{i=1}^{\ell(\mu)} [\mu_i]_q   \prod_{\substack{c \in \mu \\ a'(c) \neq 0}} (1-q^{a'(c)})  }
	{\prod_{\substack {c \in \mu                                                          \\ a(c) \neq 0}} (q^{a(c)} -1) \prod\limits_{c\in \mu  } (1 -q^{a(c)+1}) }. \\
\end{align*}
Now using the fact that
\begin{align*}
	\frac{ \prod_{\substack{ c\in \mu \\ a'(c) \neq 0}} (1-q^{a'(c)}) }
	{ \prod_{\substack {c \in \mu     \\ a(c) \neq 0}} (q^{a(c)} -1) }  = (-1)^{n-\ell(\mu)} && \text{ and } &&
	\prod_{c \in \mu} (1 -q^{a(c)+1}) = (q;q)_\mu,
\end{align*}
we have
\[ \left. \frac{ (1-q) B_\mu \prod_{ \substack{c \in \mu \\ a'(c) \neq 0}} (1-q^{a'(c)}t^{l'(c)}) }
	{\prod_{\substack {c \in \mu \\ a(c) \neq 0}} (q^{a(c)} -t^{l(c)+1}) \prod_{c \in \mu} (t^{l(c)} -q^{a(c)+1}) } \right\rvert_{t=1}
	= (-1)^{n-\ell(\mu)} \frac{1-q}{(q;q)_\mu} \sum_{c \in \mu} [\mu_i]_q. \]
The second term in the product can also be specialized to $t=1$, though some care is needed. Note first that
\begin{align*}
	\frac{(1-t) \prod_{\substack{c \in \mu                                           \\ a'(c) = 0}} (1-q^{a'(c)}t^{l'(c)})  }{\prod_{\substack{c \in \mu \\ a(c) = 0}} (q^{a(c)} -t^{l(c)+1}) }
	 & = \frac{(1-t) \prod_{ \substack{c \in \mu                                     \\ a'(c) = 0}} (1-t^{l'(c)})  }{\prod_{\substack{c \in \mu \\ a(c) = 0}} (1 -t^{l(c)+1}) } \\
	 & =  \frac{(1-t) (t;t)_{\ell(\mu)-1}}{(t;t)_{m_1(\mu)} \cdots (t;t)_{m_n(\mu)}} \\
	 & = \frac{1}{[\ell(\mu)]_t} \qbinom{\ell(\mu)}{m_1(\mu), \dots, m_n(\mu)}_t,
\end{align*}
where $m_i(\mu)$ is the multiplicity of $i$ in $\mu$. This can now be specialized at $t=1$.

We conclude that \[ \left. \frac{M B_\mu \Pi_\mu}{w_\mu} \right\rvert_{t=1} = \frac{(-1)^{n-\ell(\mu)} }{\ell(\mu)}  \binom{\ell(\mu) }{ m_1(\mu),\dots, m_n(\mu)}  (q;q)_\mu^{-1}       (1-q) \sum_{c \in \mu} [\mu_i]_q. \]

There is some further notation and simplifications that will help us interpret this product combinatorially.
First, note that \[ (1-q) \sum_{i=1}^{\ell(\mu)} [\mu_i]_q = \sum_{i=1}^{\ell(\mu)} (1-q ^{\mu_i}), \] and that \[ \binom{\ell(\mu) }{ m_1(\mu),\dots, m_n(\mu)} = \# R(\mu) \] is the number of rearrangements $\alpha=(\alpha_1,\dots, \alpha_{\ell}) \in \R(\mu)$ of the parts of $\mu$.
Therefore, \[ \# R(\mu) \cdot \sum_{i=1}^{\ell(\mu)} (1-q^{\mu_i}) = \sum_{\alpha \in \R(\mu)} \sum_{i=1}^{\ell(\mu)}( 1-q^{\alpha_i}). \]

This corresponds to selecting a rearrangement $\alpha$ of $\mu$ then selecting some $i$ from $1$ to $\ell(\mu)$. Equivalently, we can first select a rearrangement $r$, take $1-q^{\alpha_1}$, then circularly rearrange $\alpha$, keeping this selection of $1-q^{\alpha_1}$. Since there are $\ell(\mu)$ circular rearrangements, we have that \[ \frac{\# R(\mu)}{\ell(\mu)} \sum_{i=1}^{\ell(\mu)} (1-q^{\mu_i}) = \sum_{\alpha \in R(\mu)} (1-q^{\alpha_1}) \] and so we can conclude that \[ \left. \frac{MB_\mu \Pi_\mu}{w_\mu} \right\rvert_{t=1} = (-1)^{n-\ell(\mu)}(q;q)_{\mu}^{-1}\sum_{\alpha \in R(\mu)} (1-q^{\alpha_1}). \]

Now for the second component, we need the classical result (see \cite{Stanley-Book-1999} and \cite{Haglund-Book-2008}) that
\begin{align*}
	(q;q)_n h_n\left[\frac{X}{1-q} \right] & = \sum_{w= (w_1,\dots,w_n) \in \mathbb{N}_+^n} q^{\maj(w)}  x_{w_1} \cdots x_{w_n}     \\
	                                       & = \sum_{w= (w_1,\dots,w_n) \in \mathbb{N}_+^n} q^{\comaj(w)}  x_{w_1} \cdots x_{w_n}   \\
	                                       & = \sum_{w= (w_1,\dots,w_n) \in \mathbb{N}_+^n} q^{\revmaj(w)}  x_{w_1} \cdots x_{w_n}.
\end{align*}
To get the combinatorial objects we want, it will be best to choose the last of these equalities involving the reverse major index.
Thus, if we want the coefficient of the monomial symmetric function in
\[ {(q;q)}_\mu h_\mu \left[ \frac{X}{1-q} \right] = \sum_{\lambda \vdash n} m_\lambda[X] \left\langle  {(q;q)}_\mu h_\mu \left[ \frac{X}{1-q} \right] , h_\lambda \right \rangle \]
we find that we must have
\[ \left\langle {(q;q)}_\mu h_\mu \left[ \frac{X}{1-q} \right] , h_\lambda \right \rangle = \sum_{ \vec{w} \in \WV(\lambda, \mu)} q^{\revmaj(\vec{w})}, \]
where $\revmaj(\vec{w})= \revmaj(w^1)+\cdots+ \revmaj(w^n)$. This means that we have
\[ \left. \langle \Ht_\mu, h_\lambda \rangle \right\rvert_{t=1} = \sum_{\vec{w} \in \WV(\lambda, \mu)} q^{\revmaj(\vec{w})}. \]

For the last term in the summation, we start with the specialization \[ \Ht_\mu[X;q,1] = {(q;q)_\mu} h_\mu \left[ \frac{X}{1-q} \right]: \]
the Cauchy Identity tells us that for any two expressions $X,Y$, and any two dual bases $\{u_\lambda\}_\lambda$, $\{v_\lambda\}$ under the Hall scalar product, we have \[ h_{n}[XY] = \sum_{\lambda \vdash n} u_{\lambda}[X] v_\lambda[X]; \]
in particular, if we use the elementary symmetric functions and forgotten symmetric functions, we have \[ h_{n} \left[ \frac{X}{1-q} \right] = \sum_{\lambda \vdash n} e_\lambda[X] f_\lambda \left[ \frac{1}{1-q} \right], \]
from which we can write \[ \Ht_\mu[X;q,1]  =  {(q;q)_\mu}  \sum_{\eta \vdash n} e_\eta[X] \sum_{ \vec{\nu} \in \PR(\eta,\mu)} f_{\vec{\nu}}\left[ \frac{1}{1-q} \right] \] where $f_{\vec{\nu}} = f_{\nu^1} f_{\nu^2} \cdots f_{\nu^{\ell(\mu)}}$.
We arrive at our preliminary expansion
\begin{align*}
	\left. \Xi e_\lambda \right\rvert_{t=1} & = \sum_{\eta \vdash n} e_\eta \sum_{\mu \vdash n}  (-1)^{n-\ell(\mu)} \sum_{\beta \in \R(\mu)} (1-q^{\beta_1})                                       \\
	                                        & \quad \times \sum_{\vec{w} \in \WV(\lambda, \mu)} q^{\revmaj(\vec{w})} \sum_{\vec{\nu} \in \PR(\eta,\mu)} f_{\vec{\nu}}\left[ \frac{1}{1-q} \right].
\end{align*}

Instead of summing over all $\mu$ then summing over all compositions $\beta \in R(\mu)$ that rearrange to $\mu$, we can instead just some over all compositions $\beta$. Putting everything together, we get the following.

\begin{proposition}
	\label{firstexpansion}
	For any $\lambda \vdash n$, we have
	\[ \left. \Xi e_{\lambda} \right\rvert_{t=1} = \sum_{\eta \vdash n} \sum_{\beta \vDash n} \sum_{\vec{w} \in \WV(\lambda, \beta)} \sum_{\vec{\nu} \in \PR(\eta,\beta)} q^{\revmaj(\vec{w})} (-1)^{n-\ell(\beta)} (1-q^{\beta_1}) f_{\vec{\nu}}\left[ \frac{1}{1-q} \right] e_\eta. \]

	Moreover, for any $\lambda \vdash n$ and $\gamma \vdash m$, we have
	\[ \widetilde{\Delta}_{m_\gamma} \Xi e_{\lambda} = \sum_{\eta \vdash n}  D_{\lambda, \eta}^{\gamma}(q) e_\eta, \]
	where
	\begin{align*}
		D^{\gamma}_{\lambda, \eta} = \sum_{\beta \vDash n} \sum_{\vec{w} \in \WV(\lambda, \beta)} \sum_{\vec{\nu} \in \PR(\eta,\beta)} & q^{\revmaj(\vec{w})} m_\gamma\left[ \sum_i [\beta_i]_q \right]                         \\
		                                                                                                                               & \times (-1)^{n-\ell(\beta)} (1-q^{\beta_1}) f_{\vec{\nu}}\left[ \frac{1}{1-q} \right].
	\end{align*}
\end{proposition}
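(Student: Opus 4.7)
The plan is to combine the three $t=1$ specializations derived earlier in Section~\ref{sec:preliminary-manipulations} with the modified Macdonald expansion of $\Delta_{m_\gamma} M\Delta_{e_1} \Pi e_\lambda^\ast$ already obtained there, and then reindex. Starting from
$$ \Delta_{m_\gamma} M\Delta_{e_1} \Pi e_\lambda^\ast \;=\; \sum_{\mu \vdash n} \frac{MB_\mu \Pi_\mu}{w_\mu}\,\langle h_\lambda, \Ht_\mu \rangle\, m_\gamma[B_\mu]\,\Ht_\mu, $$
I would substitute the evaluations at $t=1$ that have just been computed: $MB_\mu\Pi_\mu/w_\mu$ becomes $(-1)^{n-\ell(\mu)}(q;q)_\mu^{-1} \sum_{\alpha \in R(\mu)}(1-q^{\alpha_1})$; the pairing $\langle h_\lambda, \Ht_\mu\rangle$ becomes $\sum_{\vec{w}\in\WV(\lambda,\mu)} q^{\revmaj(\vec{w})}$; the eigenvalue $m_\gamma[B_\mu]$ becomes $m_\gamma\!\left[\sum_i [\mu_i]_q\right]$; and $\Ht_\mu[X;q,1]$ is expanded via the Cauchy identity as $(q;q)_\mu \sum_{\eta \vdash n} e_\eta \sum_{\vec{\nu}\in \PR(\eta,\mu)} f_{\vec{\nu}}[1/(1-q)]$. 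The two copies of $(q;q)_\mu$ then cancel, leaving a quintuple sum indexed by $\eta$, $\mu$, $\alpha \in R(\mu)$, $\vec{w} \in \WV(\lambda,\mu)$, and $\vec{\nu}\in \PR(\eta,\mu)$.

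The key reindexing step is that the double sum $\sum_{\mu \vdash n}\sum_{\alpha \in R(\mu)}$ is the same as the single sum $\sum_{\beta \vDash n}$, since each composition $\beta \vDash n$ rearranges to a unique partition $\mu$. Moreover, all of the remaining factors---namely $\WV(\lambda,\mu)$, $\PR(\eta,\mu)$, $\revmaj(\vec{w}) = \sum_i \revmaj(w^i)$, $f_{\vec{\nu}} = \prod_i f_{\nu^i}$, $m_\gamma[\sum_i [\mu_i]_q]$, and $(-1)^{n-\ell(\mu)}$---depend only on the multiset of parts of $\mu$, and so are unchanged when $\mu$ is replaced by any rearrangement $\beta$. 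The only factor that is genuinely sensitive to the order is $(1-q^{\alpha_1})$, which simply becomes $(1-q^{\beta_1})$ after the reindexing. Reading off the coefficient of $e_\eta$ then yields the formula for $D^{\gamma}_{\lambda,\eta}$; the formula for $\Xi e_\lambda|_{t=1}$ itself is the special case obtained by dropping the $m_\gamma[B_\mu]$ factor (equivalently, taking $\gamma = \varnothing$, so that $m_\varnothing = 1$).

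All the real work is already done in Section~\ref{sec:preliminary-manipulations}; what remains is the straightforward assembly described above. The one point that deserves an explicit check is the claim that $\WV(\lambda,\mu)$, $\PR(\eta,\mu)$, $\revmaj(\vec{w})$, and $f_{\vec{\nu}}$ depend only on the multiset of parts of the index composition, but this is immediate from their definitions: permuting the components of $\vec{w}$ or $\vec{\nu}$ permutes the summands of $\revmaj(\vec{w})$ and the factors of $f_{\vec{\nu}}$ without affecting their values. I therefore do not anticipate any genuine obstacle in executing this plan.
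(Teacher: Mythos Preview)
Your proposal is correct and follows essentially the same approach as the paper: Section~\ref{sec:preliminary-manipulations} performs exactly the three $t=1$ specializations you list, assembles them into the preliminary expansion over $\mu \vdash n$ and $\alpha \in R(\mu)$, and then reindexes the double sum over $(\mu,\alpha)$ as a single sum over compositions $\beta \vDash n$, which is precisely your key reindexing step. The paper does not even provide a separate proof block for the proposition, since the argument is completed in the text immediately preceding it; your explicit observation that the sums over $\WV(\lambda,\mu)$ and $\PR(\eta,\mu)$ (together with their weights) are invariant under permutation of the parts is the only point the paper leaves implicit, and you have handled it correctly.
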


\section{Forgotten symmetric functions}

For $\mu \vdash n$ of length $\ell$, the combinatorial formula for the forgotten symmetric function $f_\mu$ \cite{Egecioglu-Remmel-Bricks} is given by
\[
	f_\mu\left[ X \right] = (-1)^{n-\ell} \sum_{ \alpha \in \R(\mu)} \sum_{i_1 \leq \cdots \leq i_{\ell}} x_{i_1}^{\alpha_1} \cdots x_{i_\ell}^{a_\ell}. \]
Now, substituting $X = (1-q)^{-1}$, we get the expansion
\begin{equation}
	\label{eq:fmu}
	f_\mu\left[ \frac{1}{1-q}\right] = (-1)^{n-\ell} \sum_{ \alpha \in \R(\mu)} \sum_{0 \leq i_1 \leq \cdots \leq i_{\ell}} \left( q^{i_1} \right)^{\alpha_1} \cdots \left( q^{i_\ell} \right)^{a_\ell}.
\end{equation}
\begin{definition}
	Let $\mu \vdash n$. A \emph{column-composition tableau of type $\mu$} is a pair $C = (\alpha, c)$ where $\alpha \in \R(\mu)$ is a composition that rearranges to $\mu$, and $c = (c_1 \leq c_2 \leq \dots \leq c_n)$ is a sequence such that \[ c_i < c_{i+1} \implies i \in  \{\alpha_1,\alpha_1+\alpha_2, \dots, \alpha_1+\cdots + \alpha_{\ell-1}\}. \]
	We denote by $\CC_\mu$ the set of column-composition tableaux of type $\mu$, and by $\overline{\CC}_\mu$ the subset of those such that $c_1 = 0$. For $C \in \CC_\mu$, we define the \emph{length} of $C$ as $\ell(C) = \lvert \mu \rvert$ and \emph{size} of $C$ as $\lvert C \rvert = c_1 + c_2 + \dots + c_n$. We will write $c_i(C)$ for $c_i$ when we need to specify the column-composition tableau.
\end{definition}

We can depict the elements of $\CC_\mu$ as follows.
\begin{enumerate}
	\item First, draw a row of size $\lvert \mu \rvert$ which we call the base, and then depict the composition $\alpha \in \R(\mu)$ by separating the columns of the base with vertical bars; for instance, when $\mu = (3,2,2,2,1,1,1)$ and $\alpha = (2,1,2,1,2,1,3)$ we draw the base as
	      \begin{align*}
		      \scalebox{.6}{
			      \begin{tikzpicture}[xscale=-1]
				      \partitionfr{12};
				      \draw[line width = .95mm] (-.25,1) -- (12.25,1);
				      \draw[line width = .75mm] (10,-.25) -- (10,1.25);
				      \draw[line width = .75mm] (9,-.25) -- (9,1.25);
				      \draw[line width = .75mm] (7,-.25) -- (7,1.25);
				      \draw[line width = .75mm] (6,-.25) -- (6,1.25);
				      \draw[line width = .75mm] (4,-.25) -- (4,1.25);
				      \draw[line width = .75mm] (3,-.25) -- (3,1.25);
			      \end{tikzpicture}}
	      \end{align*}
	\item Next, draw $c_i$ cells above the $i^{\ith}$ column of the base; in continuing our example, if $c = (0,0,0,1,1,1,1,1,3,3,3,3)$, we draw it as
	      \begin{align*}
		      \scalebox{.6}{
			      \begin{tikzpicture}[xscale=-1]
				      \partitionfr{12,9,4,4}
				      \draw[line width = .95mm] (-.25,1) -- (12.25,1);
				      \draw[line width = .75mm] (10,-.25) -- (10,1.25);
				      \draw[line width = .75mm] (9,-.25) -- (9,2.25);
				      \draw[line width = .75mm] (7,-.25) -- (7,2.25);
				      \draw[line width = .75mm] (6,-.25) -- (6,2.25);
				      \draw[line width = .75mm] (4,-.25) -- (4,4.25);
				      \draw[line width = .75mm] (3,-.25) -- (3,4.25);
			      \end{tikzpicture}}
	      \end{align*}
\end{enumerate}

Let us define the $q$-enumerators
\begin{align*}
	\bC_\mu = \sum_{C \in \CC_\mu} q^{\lvert C \rvert} &  & \text{ and } &  & \overline{\bC}_\mu = \sum_{C \in \overline{\CC}_\mu} q^{\lvert C \rvert},
\end{align*}
which are power series in $q$. Then, by construction we have the following.

\begin{proposition}
	\begin{align*}
		f_{\mu}\left[ \frac{1}{1-q} \right] = (-1)^{|\mu| - \ell(\mu)} \bC_\mu
		 &  &
		\text{ and }
		 &  &
		(1-q^{\lvert \mu \rvert}) f_{\mu}\left[ \frac{1}{1-q} \right] = (-1)^{|\mu| - \ell(\mu)} \overline{\bC}_\mu.
	\end{align*}
\end{proposition}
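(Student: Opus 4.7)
The plan is to read off both identities directly from equation \eqref{eq:fmu} by setting up a weight-preserving bijection between its summation data and the set $\CC_\mu$, then relating $\bC_\mu$ and $\overline{\bC}_\mu$ by a shift of variables.

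First I would set up the bijection. Fix $\alpha \in R(\mu)$, with $\ell = \ell(\mu)$. I claim that weakly increasing sequences $0 \leq i_1 \leq i_2 \leq \cdots \leq i_\ell$ are in bijection with sequences $c = (c_1 \leq c_2 \leq \cdots \leq c_n)$ satisfying the defining condition of $\CC_\mu$: declare $c_j \coloneqq i_k$ whenever $\alpha_1 + \cdots + \alpha_{k-1} < j \leq \alpha_1 + \cdots + \alpha_k$, so that $c$ is constant with value $i_k$ on the $k^{\ith}$ block of length $\alpha_k$. Any strict jump $c_j < c_{j+1}$ can then occur only at a block boundary $j = \alpha_1 + \cdots + \alpha_k$ for $k \leq \ell - 1$, matching the defining condition of $\CC_\mu$, and the inverse map just reads off the common value of $c$ on each block. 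Crucially, equal consecutive blocks are permitted on both sides: the definition of $\CC_\mu$ allows but does not require strict jumps at block boundaries, and the $i_k$ are only required to be weakly increasing. Under this bijection
\[
(q^{i_1})^{\alpha_1} \cdots (q^{i_\ell})^{\alpha_\ell} \;=\; q^{\alpha_1 i_1 + \cdots + \alpha_\ell i_\ell} \;=\; q^{c_1 + \cdots + c_n} \;=\; q^{|C|},
\]
so summing equation \eqref{eq:fmu} over $\alpha \in R(\mu)$ and over the sequences $(i_k)$ immediately yields the first identity.

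For the second identity, I would use that the restriction to $C \in \overline{\CC}_\mu$ (i.e.\ $c_1 = 0$) corresponds under the bijection to $i_1 = 0$. Any weakly increasing sequence $(i_1, \ldots, i_\ell)$ decomposes uniquely as $i_k = m + j_k$ with $m = i_1 \geq 0$ and $0 = j_1 \leq j_2 \leq \cdots \leq j_\ell$; this common shift multiplies the weight $q^{\sum_k \alpha_k i_k}$ by $q^{m(\alpha_1 + \cdots + \alpha_\ell)} = q^{m|\mu|}$. Summing the geometric series in $m$ gives
\[
\bC_\mu \;=\; \frac{\overline{\bC}_\mu}{1 - q^{|\mu|}},
\]
and combining this with the first identity then yields the second.

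I do not foresee any serious obstacle; the argument reduces to verifying carefully that the bijection between block-constant sequences and tuples of block values is compatible with the definition of $\CC_\mu$ (including the cases where consecutive blocks share a value), together with the routine geometric-series manipulation that produces the factor on the left hand side.
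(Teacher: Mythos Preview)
Your proposal is correct and follows essentially the same approach as the paper: the bijection $(\alpha,(i_k)) \leftrightarrow (\alpha,c)$ that makes $c$ block-constant is exactly the one the paper uses for the first identity, and your geometric-series decomposition $i_k = m + j_k$ is the same shift-by-minimum argument the paper phrases as the injection $(\alpha,c) \mapsto (\alpha,c+1^n)$ to obtain $(1-q^{|\mu|})\bC_\mu = \overline{\bC}_\mu$. (Note that both your argument and the paper's proof actually yield the factor $1-q^{|\mu|}$, not $(1-q)^{|\mu|}$ as printed in the statement; the former is what is used downstream in the paper.)
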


\begin{proof}
	In Equation \eqref{eq:fmu}, each term in the principal evaluation of $f_\mu$ is given by selecting a rearrangement $\alpha$ of $\mu$, and choosing $i_1\leq \cdots \leq i_{\ell(\mu)}$. This uniquely determines an element $(\alpha,c) \in \CC_\mu$ where the first $\alpha_1$ columns $c_1,\dots, c_{\alpha_1}$ are of size $i_1$, the next $\alpha_2$ columns $c_{\alpha_1+1},\dots, c_{\alpha_1 + \alpha_2}$ are of size $i_2$, and so on. Since then
	\[
		\sum_{i=1}^{|\mu|} c_i = \sum_{j=1}^{\ell(\mu)} \alpha_j i_j,
	\]
	we see that $q^{\lvert (\alpha,c) \rvert}$ equals the term in Equation \eqref{eq:fmu} corresponding to choosing $\alpha$ and $i_1\leq \cdots \leq i_{\ell(\mu)}$.

	The second equality follows from the fact that if $(\alpha,c) \in \CC_\mu$, then so is $(\alpha,c+1^n)$, where $c+1^n = (c_1+1,\dots,c_n+1)$. This defines an injective map, and we have
	\[
		q^{ \lvert \mu \rvert} \bC_\mu = \sum_{ \substack{ C \in \CC_\mu \\ c_1(C) >0 }} q^{\lvert C \rvert}.
	\]
	Therefore
	\[\bC_\mu -q^{\lvert \mu \rvert} \bC_\mu = \sum_{ \substack{ C \in \CC_\mu \\ c_1 = 0}    }q^{\lvert C \rvert},
	\]
	which gives the last equality in the proposition.
\end{proof}

\section{Combinatorial expansions}

We interpret the terms in Proposition \ref{firstexpansion} by labeling a sequence of column-composition tableaux. Recall that we are trying to compute
\begin{align*}
	D^\gamma_{\lambda, \eta}(q) = \sum_{\beta \vDash n} \sum_{\vec{w} \in \WV(\lambda, \beta)} \sum_{\vec{\nu} \in \PR(\eta,\beta)} & q^{\revmaj(\vec{w})} m_\gamma\left[ \sum_i [\beta_i]_q \right]                         \\
	                                                                                                                                & \times (-1)^{n-\ell(\beta)} (1-q^{\beta_1}) f_{\vec{\nu}}\left[ \frac{1}{1-q} \right],
\end{align*}
that is, the coefficient in the expansion $\widetilde{\Delta}_{m_\lambda} \Xi e_\lambda = \sum_{\eta} D^\gamma_{\lambda,\eta}  e_\eta$.

First note that for $\vec{\nu} \in \PR(\eta,\beta)$, we have \[ (-1)^{n-\ell(\beta)} (1-q^{\beta_1}) f_{\vec{\nu}} \left[ \frac{1}{1-q} \right] = (-1)^{\ell(\eta) - \ell(\beta)}\overline{\bC}_{\nu^1} {\bC}_{\nu^2} \cdots  {\bC}_{\nu^{\ell(\beta)}},\]
The sign was computed from the fact that $\ell(\nu^1) + \cdots + \ell(\nu^{\ell(\beta)}) = \ell(\eta)$, and therefore
\[
	(-1)^{n - \ell(\beta)} (-1)^{\ell(\nu^1) - \lvert \nu^1 \rvert}  \cdots (-1)^{\ell(\nu^{\ell(\beta)}) - \lvert \nu^{\ell(\beta)} \rvert}  = (-1)^{\ell(\eta)- \ell(\beta) }\]

\begin{definition}
	A \emph{labeled column-composition tableaux} is a triple $(C, w, l)$, where $C$ is a column-composition tableau, $w \in \mathbb{N}_+^{\ell(C)}$, and $l \in \mathbb{N}^{\ell(C)}$
\end{definition}

\begin{definition}
	\label{LCCTDefinition}

	Let $\lambda, \eta \vdash n$, and $\gamma \vdash m$ such that $\ell(\gamma) \leq n$. A \emph{sequence of labeled column-composition tableaux of type $\lambda, \eta, \gamma$} is a tuple of labeled column-composition tableaux $(C^i, w^i, l^i)_{1 \leq i \leq r}$ such that, for $\beta = (\beta_1, \dots, \beta_r)$, $\beta_i = \ell(C^i)$, we have:

	\begin{enumerate}
		\item $C^1 \in \overline{\CC_{\nu^1}}$ and $C^i \in \CC_{\nu^i}$ for $i>1$, for some $\vec{\nu} \in \PR(\eta, \beta)$;
		\item $\vec{w} = (w^1, \dots, w^r) \in \WV(\lambda, \beta)$;
		\item $\vec{l} = (l^1, \dots, l^r) \in \WV(m(\gamma), \beta)$.
	\end{enumerate}

	In other words, a sequence of labeled column-composition tableaux of type $\lambda, \eta, \gamma$, is a tuple of column-composition tableaux of sizes $\beta_1, \dots, \beta_r$ such that $c_1(C^1) = 0$, so that to each tableau we associate a partition $\nu^i \vdash \beta_i$ and two words $w^i, l^i$ such that $\vec{\nu}$ rearranges to $\eta$, the global content of $\vec{w}$ is given by $\lambda$, and the letters of $l$ are the parts of $\gamma$ followed by an appropriate number of trailing zeros.

	We denote by $\LC_{\lambda,\eta}^\gamma$ the set of sequences of column-composition tableaux of type $\lambda, \eta, \gamma$. For $T = (T_i)_{1 \leq i \leq r} \in \LC_{\lambda,\eta}^\gamma$, we set $w(T_i) = w^i$ and $l(T_i) = l^i$.
\end{definition}

\begin{definition}
	\label{def:weight}
	For $T = (T_i)_{1 \leq i \leq r} \in \LC_{\lambda,\eta}^\gamma$, with $T_i = (C^i, w^i, l^i)$, let $\nu^i$ be the type of $C^i$, let $\beta_i = \lvert \nu^i \rvert$, and let \[ u(l^i) = \sum_{j=1}^{\beta_i} l^i_j \cdot (\beta_i-j). \] We define
	\begin{align*}
		\weight(T_i) = \lvert C^i \rvert + \revmaj(w^i) + u(l^i)
		 &  & \text{ and }
		 &  & \sign(T_i) = (-1)^{\ell(\nu^i)-1}.
	\end{align*}
	Notice that, for every letter in $w^i$ or $l^i$, its contribution to the weight only depends on the letter itself and the number of letters to its right. Also notice that the sign is given by the parity of the number of vertical bars in $C^i$. Finally, we define
	\begin{align*}
		\weight(T) = \sum_{i=1}^r \weight(T_i) &  & \text{ and } &  & \sign(T) = \prod_{i=1}^r \sign(T_i).
	\end{align*}
\end{definition}

\begin{example}
	\label{ex:CCT-computation}
	We are now going through an example in full detail. Let $\lambda=(3,2,2,2)$, $\eta=(3,2,1,1,1,1)$, $\gamma=(4,3,2,2,1)$, so $\lvert \lambda \rvert = \lvert \eta \rvert = 9$ and $\ell(\gamma) = 5 \leq 9$. For our convenience, we add four trailing zeros to $\gamma$, so $\gamma = (4,3,2,2,1,0,0,0,0)$.

	To build an element of $\LC_{\lambda,\eta}^\gamma$, first choose $\beta \vDash 9$ such that some permutation of $\eta$ refines $\beta$, say $\beta = (3,1,5)$. Next, select $\nu^i \vdash \beta_i$, say $\nu^1 = (2,1)$, $\nu^2 = (1)$, $\nu^3 = (3,1,1)$, so that the union of parts is $\eta$. Then, pick $C^1 \in \overline{\CC}_{\nu^1}$ and $C^i \in \CC_{\nu^i}$ for $i > 1$; say for example
	\begin{align*}
		\scalebox{.6}{
		\begin{tikzpicture}[xscale=-1]
				\path[use as bounding box] (0,-1) rectangle (2,3);
				\partitionfr{3,2}
				\draw[line width = .75mm] (-.25,1) -- (3.25,1);
				\draw[line width = .75mm] (2,-.25) -- (2,2.25);
			\end{tikzpicture}} &  &
		\scalebox{.6}{
		\begin{tikzpicture}[xscale=-1]
				\path[use as bounding box] (0,-1) rectangle (1,3);
				\partitionfr{1,1}
				\draw[line width = .75mm] (-.25,1) -- (1.25,1);
			\end{tikzpicture}} &  &
		\scalebox{.6}{
			\begin{tikzpicture}[xscale=-1]
				\path[use as bounding box] (0,-1) rectangle (5,3);
				\partitionfr{5,5,4}
				\draw[line width = .75mm] (-.25,1) -- (5.25,1);
				\draw[line width = .75mm] (1,-.25) -- (1,3.25);
				\draw[line width = .75mm] (4,-.25) -- (4,3.25);
			\end{tikzpicture}}
	\end{align*}
	(note that $c_1(C^1)=0$). Since $\lambda = (3,2,2,2)$, we have $m(\lambda) = (1,1,1,2,2,3,3,4,4)$. Pick any permutation of it and split it into parts of lengths given by the sizes of the parts of $\beta$, say for example $\vec{w} = ((2,1,2),(4),(3,4,1,3,1))$. Write these words into the bases of the tableaux. We get
	\begin{align*}
		\scalebox{.6}{
		\begin{tikzpicture}[xscale=-1]
				\path[use as bounding box] (0,-1) rectangle (2,3);
				\partitionfr{3,2}
				\draw[line width = .75mm] (-.25,1) -- (3.25,1);
				\draw[line width = .75mm] (2,-.25) -- (2,2.25);
				\draw (.5,.5) node {\Large$2$};
				\draw (1.5,.5) node {\Large$1$};
				\draw (2.5,.5) node {\Large$2$};
			\end{tikzpicture}} &  &
		\scalebox{.6}{
		\begin{tikzpicture}[ xscale=-1 ]
				\path[use as bounding box] (0,-1) rectangle (1,3);
				\partitionfr{1,1}
				\draw[line width = .75mm] (-.25,1) -- (1.25,1);
				\draw (.5,.5) node {\Large$4$};
			\end{tikzpicture}} &  &
		\scalebox{.6}{
			\begin{tikzpicture}[xscale=-1]
				\path[use as bounding box] (0,-1) rectangle (5,3);
				\partitionfr{5,5,4}
				\draw[line width = .75mm] (-.25,1) -- (5.25,1);
				\draw[line width = .75mm] (1,-.25) -- (1,3.25);
				\draw[line width = .75mm] (4,-.25) -- (4,3.25);
				\draw (.5,.5) node {\Large$1$};
				\draw (1.5,.5) node {\Large$3$};
				\draw (2.5,.5) node {\Large$1$};
				\draw (3.5,.5) node {\Large$4$};
				\draw (4.5,.5) node {\Large$3$};
			\end{tikzpicture}}
	\end{align*}
	Finally, pick any permutation of the parts of $\gamma$, and again split it into parts of lengths given by the sizes of the parts of $\beta$, say $\vec{l} = ((0,2,0),(1),(2,0,4,3,0))$. Write it underneath the bases of the tableaux. We get
	\begin{align*}
		\scalebox{.6}{
		\begin{tikzpicture}[xscale=-1]
				\path[use as bounding box] (0,-1) rectangle (2,3);
				\partitionfr{3,2}
				\draw[line width = .75mm] (-.25,1) -- (3.25,1);
				\draw[line width = .75mm] (2,-.25) -- (2,2.25);
				\draw (.5,.5) node {\Large$2$};
				\draw (1.5,.5) node {\Large$1$};
				\draw (2.5,.5) node {\Large$2$};
				\draw (.5,-.5) node {\Large$0$};
				\draw (1.5,-.5) node {\Large$2$};
				\draw (2.5,-.5) node {\Large$0$};
			\end{tikzpicture}} &  &
		\scalebox{.6}{
		\begin{tikzpicture}[ xscale=-1 ]
				\path[use as bounding box] (0,-1) rectangle (1,3);
				\partitionfr{1,1}
				\draw[line width = .75mm] (-.25,1) -- (1.25,1);
				\draw (.5,.5) node {\Large$4$};
				\draw (.5,-.5) node {\Large$1$};
			\end{tikzpicture}} &  &
		\scalebox{.6}{
			\begin{tikzpicture}[xscale=-1]
				\path[use as bounding box] (0,-1) rectangle (5,3);
				\partitionfr{5,5,4}
				\draw[line width = .75mm] (-.25,1) -- (5.25,1);
				\draw[line width = .75mm] (1,-.25) -- (1,3.25);
				\draw[line width = .75mm] (4,-.25) -- (4,3.25);
				\draw (.5,.5) node {\Large$1$};
				\draw (1.5,.5) node {\Large$3$};
				\draw (2.5,.5) node {\Large$1$};
				\draw (3.5,.5) node {\Large$4$};
				\draw (4.5,.5) node {\Large$3$};
				\draw (.5,-.5) node {\Large$0$};
				\draw (1.5,-.5) node {\Large$3$};
				\draw (2.5,-.5) node {\Large$4$};
				\draw (3.5,-.5) node {\Large$0$};
				\draw (4.5,-.5) node {\Large$2$};
			\end{tikzpicture}}
	\end{align*}
	which is an element of $\LC_{(3,2,2,2),(3,2,1,1,1,1)}^{(4,3,2,2,1)}$
	We now want to compute the weight and the sign of this sequence. Since there are $3$ vertical bars, we have that the sign is given by $(-1)^3$.

	We can compute the weight of this sequence of labeled column-composition tableaux in three steps.  First count the number of cells above the base rows: there are $2$, $1$, and $5+4=9$ cells respectively, so the total weight given by the cells is $12$.

	The weight corresponding to the labels in the base is found by taking the reverse major index of each individual base. We compute this by taking \[ \sum_{ w^i_j < w^i_{j+1}} \# \{ \text{cells in the same base and on the right of $w^i_j$} \}. \]
	In the above example, we have that $w^1 = (2,1,2)$ has an ascent in position $2$, and there is one cell to its right. Therefore $\revmaj(w^1) = 1$. Since $w^2$ has length $1$, it has no ascents. Finally, $w^3 = (3,4,1,3,1)$ has an ascent in position $1$ and one in position $3$. There are $4$ cells to the right of the label in position $2$, and $2$ cells to the right of the last ascent. Therefore, $\revmaj(w^3) = 4+2$, and the total contribution given by $\vec{w}$ is $7$.

	The last step is to calculate the contribution of labels underneath the base rows. For this, we will say a label $l^i_j$ has $\beta_i-j$ cells on its right, since these are the number of cells in its base row to the right of the label. We take \[ \sum_{i,j} l^i_j \times  \# \{ \text{cells in the same base and on the right of $l^i_j$} \}. \]
	The first nonzero label from the left is a $2$ in the second column of $T_1$. There is one cell to its right, meaning this label contributes by $2 \cdot 1$ to the weight. The next nonzero label is a $1$, but there are no cells to its right, so its contribution is $1 \cdot 0 = 0$. Similarly, $T_2$ has size $1$ so its contribution is also $0$. Finally, in $T_3$, the first label $2$ has $4$ cells to its right (so it contributes $2 \cdot 4$). The next label is a $4$ and it has $2$ cells to its right (so it contributes $4 \cdot 2$). The last nonzero label is a $3$ and it has one cell to its right (so it contributes $3 \cdot 1$). Therefore, the labels under the base rows collectively contribute a factor of $q^{2+8+8+3}$ to the weight. Putting everything together, we have
	\begin{align*}
		\weight(T) = (2+1+9) + (1+0+4+2) + (2+8+8+3) = 40
	\end{align*}
	and $\sign(T) = (-1)^3 = -1$.
\end{example}

The following proposition is an immediate consequence of our construction.

\begin{proposition}
	\[ D^\gamma_{\lambda, \eta} = \sum_{T \in \LC_{\lambda, \eta}^\gamma} q^{\weight(T)} \sign(T). \]
\end{proposition}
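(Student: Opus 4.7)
The proof is a matter of unpacking the four factors of the summand in Proposition~\ref{firstexpansion} combinatorially and observing that the result matches the definition of $\LC^\gamma_{\lambda,\eta}$ together with the stated weight and sign statistics.

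First, $q^{\revmaj(\vec w)} = \prod_i q^{\revmaj(w^i)}$, which is precisely the contribution of the words $w^i$ written into the bases of the column-composition tableaux. For the factor $(-1)^{n-\ell(\beta)}(1-q^{\beta_1})f_{\vec\nu}[\tfrac{1}{1-q}]$, I would invoke the identity
\[
(-1)^{n-\ell(\beta)}(1-q^{\beta_1})f_{\vec\nu}\!\left[\tfrac{1}{1-q}\right] \;=\; (-1)^{\ell(\eta)-\ell(\beta)}\,\overline{\bC}_{\nu^1}\,\bC_{\nu^2}\cdots\bC_{\nu^{\ell(\beta)}}
\]
already derived at the start of this section, and then expand each $\bC$-enumerator by its definition. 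This produces the sum over tuples $(C^1,\ldots,C^r)$ with $C^1 \in \overline{\CC}_{\nu^1}$ and $C^i \in \CC_{\nu^i}$ for $i \geq 2$, contributing $q^{\sum_i |C^i|}$ with sign $(-1)^{\ell(\eta)-\ell(\beta)} = \prod_i (-1)^{\ell(\nu^i)-1} = \prod_i \sign(T_i)$, so the sign already matches.

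The one step with genuine content is the expansion of $m_\gamma[\sum_i [\beta_i]_q]$. Writing $\sum_i [\beta_i]_q = \sum_{(i,j)} q^j$ over the $n$ pairs $(i,j)$ with $1 \leq i \leq r$ and $0 \leq j \leq \beta_i-1$, the plethystic substitution gives
\[
m_\gamma\!\left[\sum_i [\beta_i]_q\right] \;=\; \sum_{\phi} \prod_{(i,j)} (q^j)^{\phi(i,j)},
\]
where $\phi : \{(i,j)\} \to \mathbb{Z}_{\geq 0}$ ranges over all assignments whose multiset of nonzero values equals the multiset of parts of $\gamma$. Packaging the values $\phi(i,\cdot)$ into a word $l^i = (l^i_1, \dots, l^i_{\beta_i})$ via the reverse indexing $l^i_k \coloneqq \phi(i, \beta_i - k)$ yields $\vec l \in \WV(m(\gamma), \beta)$, and the $q$-weight becomes $\sum_i \sum_k l^i_k(\beta_i - k) = \sum_i u(l^i)$. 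Hence $m_\gamma[\sum_i [\beta_i]_q] = \sum_{\vec l \in \WV(m(\gamma), \beta)} q^{u(\vec l)}$.

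Combining these three expansions, the summand of $D^\gamma_{\lambda,\eta}$ becomes a sum over all tuples $(\beta, \vec\nu, \vec w, \vec l, (C^i)_{i=1}^r)$ satisfying precisely the conditions of Definition~\ref{LCCTDefinition}, weighted by $\sign(T)\,q^{\weight(T)}$ as prescribed in Definition~\ref{def:weight}. This is exactly $\sum_{T \in \LC^\gamma_{\lambda,\eta}} \sign(T)\, q^{\weight(T)}$, proving the claim. The principal bookkeeping subtlety lies in the reverse indexing $l^i_k = \phi(i, \beta_i - k)$: it is what converts the natural plethystic $q$-weight $\sum_j j\,\phi(i,j)$ into the statistic $u(l^i) = \sum_k l^i_k(\beta_i - k)$ used in Definition~\ref{def:weight}, and thereby aligns the $l^i$-contribution with the visual convention of placing the word $l^i$ below the base row of the tableau.
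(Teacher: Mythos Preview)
Your proof is correct and follows essentially the same approach as the paper's own proof: both expand the three factors (the $\revmaj$ contribution, the forgotten-symmetric-function factor via the $\bC$-enumerators, and the monomial-symmetric-function factor) and match the resulting data against Definition~\ref{LCCTDefinition}. Your treatment of the monomial expansion is in fact slightly more explicit than the paper's---you spell out the reverse indexing $l^i_k = \phi(i,\beta_i-k)$ that converts the plethystic weight $\sum_j j\,\phi(i,j)$ into $u(l^i)$, whereas the paper simply asserts the identity $m_\gamma[\sum_i [\beta_i]_q] = \sum_{\vec l \in \WV(m(\gamma),\beta)} q^{\sum_i u(l^i)}$ as following from the combinatorial formula for monomial symmetric functions.
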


\begin{proof}
	Let $n = \lvert \eta \rvert$. The terms in $D^\gamma_{\lambda, \eta} $ are found by first selecting a composition $\beta \vDash n$ such that some permutation of $\eta$ refines $\beta$, a word vector $\vec{w} \in \WV(\lambda, \beta)$, and a partition vector $\vec{\nu} \in \PR(\eta,\beta)$. This gives us the term \[ q^{\revmaj(\vec{w})} m_\gamma\left[ \sum_i [\beta_i]_q \right] (-1)^{n-\ell(\beta)} (1-q^{\beta_1}) f_{\vec{\nu}}\left[ \frac{1}{1-q}\right] \] which we can rewrite as \[ q^{\revmaj(\vec{w})} m_\gamma\left[ \sum_i [\beta_i]_q \right] (-1)^{\ell(\eta)-\ell(\beta)} \overline{\bC}_{\nu^1} \bC_{\nu^2} \cdots \bC_{\nu^{\ell(\beta)}}. \]
	From the combinatorial formula for the monomial symmetric functions, we have
	\[
		m_\gamma\left[  [\beta_1]_q + \cdots + [\beta_{\ell(\beta)}]_q \right] = \sum_{ \vec{l} \in \WV(m(\gamma)),\beta} q^{u(l^1)+\cdots + u (l^{\ell(\beta)})}.
	\]
	Substituting this in for the term, we get
	\[
		\sum_{ \vec{l} \in \WV(m(\gamma),\beta)} (-1)^{\ell(\eta)-\ell(\beta)}  q^{u(l^1)+\cdots + u (l^{\ell(\beta)})} q^{\revmaj(\vec{w})}  \overline{\bC}_{\nu^1} \bC_{\nu^2} \cdots \bC_{\nu^{\ell(\beta)}}
	\]
	We now see that the monomials in this expansion are found by selecting $C^i \in \CC_{\nu^i}$ for each $i$, giving the term
	\begin{align*}
		(-1)^{\ell(\eta)-\ell(\beta)} q^{\revmaj(\vec{w})} & q^{u(l^1)+\cdots + u (l^{\ell(\beta)})} q^{\lvert C^1 \rvert + \cdots + \lvert C^{\ell(\beta)} \rvert} \\ & = \prod_{i=1}^{\ell(\beta)}  (-1)^{\ell(\nu^i)-1} q^{\revmaj(w^i) + u(l^i) + |C^i|}.
	\end{align*}
	This uniquely determines an element $T \in \LC^{\gamma}_{\lambda,\eta}$, with $T_i = (C^i,w^i,l^i)$, and its sign and weight give precisely the same monomial and sign.
\end{proof}

\section{A weight-preserving, sign-reversing involution}

Our goal is to now give a positive expansion for the coefficients $D_{\lambda,\eta}^\gamma (q)$. To achieve this result, we will define a weight-preserving, sign-reversing involution $\psi \colon \LC_{\lambda,\eta}^\gamma \rightarrow  \LC_{\lambda,\eta}^\gamma$ whose fixed points $U^{\gamma}_{\lambda, \eta}$ give \[ D_{\lambda,\eta}^\gamma (q) = \sum_{T \in U_{\lambda,\eta}^\gamma} q^{\weight(T)}. \]

To construct $\psi$, we need to introduce a split map. Suppose $S = (C, w, l)$ is one of the possible labeled column-composition tableaux appearing in a sequence $T \in \LC_{\lambda,\eta}^\gamma$. Let $C = (\alpha, c)$, and recall that this means that the vertical bars appearing in $C$ are in positions $\alpha_1, \alpha_1 + \alpha_2, \dots, \alpha_1 + \dots + \alpha_{\ell-1}$. Let $d = \# \{ 1 \leq i \leq \alpha_1 \mid w_i < w_{i+1} \}$, that is, $d = \asc(w_1, \dots, w_{\alpha_1+1})$ (we don't count the last position if $\alpha$ has just one part).

\begin{definition}
	Suppose that $S$ has at least one bar, i.e.\ $\lvert \alpha \rvert \neq (\alpha_1)$. Then we say that $S$ can split, and define $\spl(S) = (S_1, S_2)$, where $S_1$ is the portion of $S$ occurring before the first vertical bar, and $S_2$ is obtained from the portion of $S$ after the first vertical bar by adding $d + \lvert l^1 \rvert$ cells to each column, $\lvert l^1 \rvert$ being the sum of the labels in $l$ appearing before the first bar (see Example~\ref{ex:split} for a pictorial realization).

	More formally, we have
	\begin{align*}
		S_1 & = ((\alpha_1, (c_1, \dots, c_{\alpha_1})), w^1, l^1),                                                                                              \\
		S_2 & = (((\alpha_2, \dots, \alpha_\ell), (c_{\alpha_1+1} + d + \lvert l^1 \rvert, \dots, c_{\lvert \alpha \rvert} + d + \lvert l^1 \rvert)), w^2, l^2).
	\end{align*}
	where we define $w^1 = (w_1, \dots, w_{\alpha_1})$, $l^1 = (l_1, \dots, l_{\alpha_1})$, and $w^2 = (w_{\alpha_1 + 1}, \dots, w_{\lvert \alpha \rvert})$, $l^2 = (l_{\alpha_1 + 1}, \dots, l_{\lvert \alpha \rvert})$.
\end{definition}

\begin{example}
	\label{ex:split}
	Let $S ((\alpha, c), w, l)$, $\alpha=(3,1,1)$, $c=(1,1,1,1,2)$, $w=(7,4,7,7,5)$, $l=(0,0,2,0,3)$. We split it after $\alpha_1 = 3$ cells. We have $1$ ascent in $(7,4,7,7)$ so $d=1$, and we have $\lvert l^1 \rvert = 0+0+2 = 2$, so we add $3$ cells to each column in $S_2$, and get

	\begin{align}
		\label{eq:separate}
		\begin{tikzpicture}[baseline=1.15cm]
			\draw (0,0) node { };
			\draw (0,1.25) node {$\spl$};
		\end{tikzpicture}
		\scalebox{.6}{
			\begin{tikzpicture}[xscale=-1, baseline=1.15cm]
				\path[use as bounding box] (-.5,0) rectangle (5.5,6);
				\draw (5.5,1.25) node {$\left(\rule{0cm}{2cm}\right.$};
				\partitionfr{5,5,1}
				\draw[line width = .75mm] (-.25,1) -- (5.25,1);
				\draw[line width = .75mm] (1,-.25) -- (1,3.25);
				\draw[line width = .75mm] (2,-.25) -- (2,2.25);
				\draw (.5,.5) node {\Large$5$};
				\draw (1.5,.5) node {\Large$7$};
				\draw (2.5,.5) node {\Large$7$};
				\draw (3.5,.5) node {\Large$4$};
				\draw (4.5,.5) node {\Large$7$};
				\draw (.5,-.5) node {\Large$3$};
				\draw (1.5,-.5) node {\Large$0$};
				\draw (2.5,-.5) node {\Large$2$};
				\draw (3.5,-.5) node {\Large$0$};
				\draw (4.5,-.5) node {\Large$0$};
				\draw (-.5,1.25) node {$\left.\rule{0cm}{2cm}\right)$};
			\end{tikzpicture}
		}
		                                                  &                &
		\begin{tikzpicture}[baseline=1.15cm]
			\draw (0,0) node {};
			\draw (0,1.25) node {$=$};
		\end{tikzpicture}
		                                                  &                &
		\scalebox{.6}{
			\begin{tikzpicture}[xscale=-1, baseline=1.15cm]
				\path[use as bounding box] (0,0) rectangle (3,6);
				\partitionfr{3,3}
				\draw[line width = .75mm] (-.25,1) -- (3.25,1);
				\draw (.5,.5) node {\Large$7$};
				\draw (1.5,.5) node {\Large$4$};
				\draw (2.5,.5) node {\Large$7$};
				\draw ( .5,-.5) node {\Large$2$};
				\draw (1.5,-.5) node {\Large$0$};
				\draw (2.5,-.5) node {\Large$0$};
			\end{tikzpicture}}
		                                                  & \scalebox{2}{
			\begin{tikzpicture}[baseline=1.15cm]
				\draw (0,0) node {};
				\draw (0,1) node {,};
			\end{tikzpicture}
		}
		                                                  & \scalebox{.6}{
		\begin{tikzpicture}[xscale=-1, baseline=1.15cm]
				\path[use as bounding box] (0,0) rectangle (2,6);
				\partitionfr{2,2,2,2,2,1}
				\draw[line width = .75mm] (-.25,1) -- (2.25,1);
				\draw[line width = .75mm] (1,-.25) -- (1,6.25);
				\draw (.5,.5) node {\Large$5$};
				\draw (1.5,.5) node {\Large$7$};
				\draw (.5,-.5) node {\Large$3$};
				\draw (1.5,-.5) node {\Large$0$};
			\end{tikzpicture}} &
		                                                  &                &
	\end{align}
\end{example}

\begin{proposition}
	\label{weightpreserving}
	The map $\spl$ is weight preserving: if $\spl(S) = (S_1, S_2)$, then $\weight(S) = \weight(S_1) + \weight(S_2)$.
\end{proposition}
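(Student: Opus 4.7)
The plan is to decompose $\weight(S)$ into its three additive pieces from Definition~\ref{def:weight} — the cell count $|C|$, the reverse major index $\revmaj(w)$, and the label sum $u(l)$ — and show that each discrepancy between $\weight(S)$ and $\weight(S_1)+\weight(S_2)$ cancels. Write $\beta = |\alpha|$, and split $w = w^1 \cdot w^2$, $l = l^1 \cdot l^2$ at position $\alpha_1$, exactly as in the definition of $\spl$.

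The cell contribution is the easy part. Since $S_2$ is obtained by adding $d + |l^1|$ cells to each of its $\beta - \alpha_1$ columns,
\[
|C^{S_1}| + |C^{S_2}| \;=\; |C| + (\beta - \alpha_1)\bigl(d + |l^1|\bigr),
\]
so the split \emph{gains} $(\beta - \alpha_1)(d + |l^1|)$ in cells. The rest of the proof is to show that the other two contributions each \emph{lose} a matching amount, whose sum is exactly $(\beta - \alpha_1)(d + |l^1|)$.

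For the label sum, I would observe that the contribution of $l_j$ with $j \leq \alpha_1$ to $u(l)$ is $l_j(\beta - j)$, but in $u(l^1)$ it is only $l_j(\alpha_1 - j)$, while labels with $j > \alpha_1$ keep the same coefficient $\beta - j$ in $u(l^2)$ (since $\beta - j = (\beta - \alpha_1) - (j - \alpha_1)$). This yields
\[
u(l) - u(l^1) - u(l^2) \;=\; (\beta - \alpha_1)\sum_{j=1}^{\alpha_1} l_j \;=\; (\beta - \alpha_1)\,|l^1|.
\]
For the reverse major index, the ascents of $w$ split into three types: ascents strictly inside $w^1$ (positions $i < \alpha_1$), the boundary ascent at position $\alpha_1$ if $w_{\alpha_1} < w_{\alpha_1+1}$, and ascents strictly inside $w^2$. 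Ascents inside $w^2$ contribute the same amount $\beta - \alpha_1 - j$ to both $\revmaj(w)$ and $\revmaj(w^2)$, so they cancel out. An ascent at position $i < \alpha_1$ contributes $\beta - i$ to $\revmaj(w)$ but only $\alpha_1 - i$ to $\revmaj(w^1)$, a deficit of $\beta - \alpha_1$ each. The boundary ascent (if present) contributes $\beta - \alpha_1$ to $\revmaj(w)$ and nothing to either $\revmaj(w^1)$ or $\revmaj(w^2)$. Summing, and recognizing that $d = \asc(w^1) + [\![w_{\alpha_1} < w_{\alpha_1+1}]\!]$ is precisely the number of these combined ascents, we obtain
\[
\revmaj(w) - \revmaj(w^1) - \revmaj(w^2) \;=\; (\beta - \alpha_1)\,d.
\]
Adding the two deficits gives $(\beta - \alpha_1)(d + |l^1|)$, which cancels the cell gain exactly.

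The only place demanding care is the bookkeeping of the boundary ascent at position $\alpha_1$: it is counted by $d$ (since $d$ is $\asc$ of $(w_1,\dots,w_{\alpha_1+1})$) and yet it is not an ascent of either $w^1$ or $w^2$, so it must be recovered through the extra cells added to $S_2$ rather than through either smaller $\revmaj$. Once the definition of $d$ is unpacked correctly, the three quantities align and the identity $\weight(S) = \weight(S_1) + \weight(S_2)$ follows.
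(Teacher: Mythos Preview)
Your proof is correct and follows essentially the same approach as the paper: both decompose the weight into its three additive pieces (cell count, $\revmaj$, and $u$), compute the discrepancy for each under splitting, and observe that the cell gain $(\beta-\alpha_1)(d+|l^1|)$ is exactly offset by the combined losses in $\revmaj$ and $u$. Your explicit remark on the boundary ascent at position $\alpha_1$ is a helpful clarification of a step the paper leaves implicit in its summation manipulation.
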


\begin{proof}
	Suppose $\spl(S) = (S_1, S_2)$. Let $S = (C, w, l)$, $C = (\alpha, c)$ with $\alpha_1 = v$, and let $\ell(C) = n$. Let us denote $S_1 = (C^1, w^1, l^1)$ and $S_2 = (C^2, w^2, l^2)$.

	By definition, the weight has three components, one coming from the total size, one coming from the $\revmaj$ of the word $w$, and one coming from the labels $l$.

	Let $d = \asc(w_1, \dots, w_{v+1})$. By definition of $\spl$, the number of cells above $S_1$ stays the same, while the number of cells above $S_2$ increases by $\ell(C_2) (d + \lvert l^1 \rvert) = (n-v) (d + \lvert l^1 \rvert)$, so the first component of the total weight increases by the same amount.

	By definition of $\revmaj$, we have
	\begin{align*}
		\revmaj(w) & = \sum_{i \in \Asc(w)} (n-i)                                                                  \\
		           & = \sum_{i \in \Asc(w^1)} (n-i) + \sum_{i \in \Asc(w^2)} (n-(v+i)) + \chi(w_v < w_{v+1}) (n-v) \\
		           & = \sum_{i \in \Asc(w^1)} (n-v+v-i)                                                            \\
		           & \quad + \sum_{i \in \Asc(w^2)} (n-v-i) + \chi(w_v < w_{v+1})(n-v)                             \\
		           & = (n-v) \cdot d + \revmaj(w^1) + \revmaj(w^2)
	\end{align*}
	so the second component of the total weight decreases by $(n-v) \cdot d$.

	Finally, by definition of $u$, we have
	\begin{align*}
		u(w) & = \sum_{i=1}^n l_i \cdot (n-i)                                          \\
		     & = \sum_{i=1}^v l_i \cdot (n-i) + \sum_{i=v+1}^n l_i \cdot (n-i)         \\
		     & = \sum_{i=1}^v l_i \cdot (n-v+v-i) + \sum_{i=1}^{n-v} l_i \cdot (n-v-i) \\
		     & = (n-v) \lvert l^1 \rvert + u(l^1) + u(l^2)                             \\
	\end{align*}
	so the third component of the total weight decreases by $ (n-v) \lvert l^1 \rvert$.

	All these changes cancel out and so the weight is preserved, as desired.
\end{proof}

\begin{definition}
	Given two labeled column-composition tableaux $S_1$, $S_2$, we define \[ \asc(S_1; S_2) \coloneqq \asc(w(S_1) w(S_2)_1), \] that is, the number of ascents in the word of $S_1$ followed by the first letter of $S_2$.
\end{definition}

\begin{lemma}
	Let $S_1, S_2$ be two labeled column-composition tableaux. There exists $S$ such that $\spl(S) = (S_1, S_2)$ if and only if
	\begin{equation}
		\label{eq:join}
		c_1(S_2) \geq c_\ell(S_1) + \asc(S_1;S_2) + \lvert l(S_1) \rvert
	\end{equation}
	If such $S$ exists, then it is unique; we say that $S_1$ can join $S_2$ and set $\join(S_1, S_2) = S$.
\end{lemma}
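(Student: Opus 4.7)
The plan is to identify, directly from the definition of $\spl$, exactly what $S$ must look like in order to split into the given pair $(S_1, S_2)$, and then show that the constraints imposed on $S$ by being a valid labeled column-composition tableau collapse to the single inequality \eqref{eq:join}.

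For uniqueness, I would first observe that any preimage $S$ of $(S_1,S_2)$ under $\spl$ is forced to have composition $\alpha = (\ell(S_1), \alpha_1(S_2), \alpha_2(S_2), \dots)$, word $w = w(S_1) \cdot w(S_2)$, and label word $l = l(S_1) \cdot l(S_2)$, since $S_1$ is by definition the portion of $S$ before its first vertical bar (and in particular has composition with a single part). The shift used by $\spl$ is $d + \lvert l^1 \rvert$, where $d = \asc(w_1,\dots,w_{\ell(S_1)+1}) = \asc(S_1;S_2)$ and $l^1 = l(S_1)$, a quantity depending only on $S_1$ and the first letter of $w(S_2)$. Hence the column heights of $S$ are uniquely determined: the first $\ell(S_1)$ equal $c(S_1)$ entrywise, and the remaining ones equal $c(S_2) - \asc(S_1;S_2) - \lvert l(S_1) \rvert$ entrywise.

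For the forward direction, if such $S$ exists then the requirement that $c(S)$ be weakly increasing at position $\ell(S_1)$ yields exactly
\[
c_\ell(S_1) \le c_1(S_2) - \asc(S_1;S_2) - \lvert l(S_1) \rvert,
\]
which is \eqref{eq:join}. For the converse, I would assume \eqref{eq:join} and define $S$ by the formula above, then verify it is a legitimate labeled column-composition tableau by checking: (a) the column heights of $S$ are non-negative, which reduces to $c_\ell(S_1) \ge 0$ combined with \eqref{eq:join}; (b) $c(S)$ is weakly increasing, inherited from $c(S_1)$ on the left block, from $c(S_2)$ on the right block (a constant shift preserves both equalities and strict inequalities), and equivalent to \eqref{eq:join} at the single join position; and (c) strict increases of $c(S)$ occur only at bar positions of $\alpha$, inherited within each block and automatic at the newly inserted bar between them.

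The bookkeeping step that requires the most care is (c), together with confirming that the recovered shift $\asc(S_1;S_2)+\lvert l(S_1)\rvert$ coincides with the ``$d+\lvert l^1\rvert$'' that $\spl$ would produce on the candidate $S$: this follows because $w(S_1)$ and the first letter of $w(S_2)$ together form the initial segment $w_1,\dots,w_{\ell(S_1)+1}$ of $w(S)$, so $\asc$ matches $d$ on the nose. Once these checks are made, the construction $\join(S_1,S_2) := S$ is well-defined and by design satisfies $\spl(\join(S_1,S_2)) = (S_1,S_2)$, concluding the proof.
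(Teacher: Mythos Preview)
Your proof is correct and follows essentially the same approach as the paper's: both construct the candidate $S$ by subtracting $\asc(S_1;S_2)+\lvert l(S_1)\rvert$ from each column of $S_2$ and concatenating, with \eqref{eq:join} being precisely the condition that the column heights remain weakly increasing across the join. Your version is more explicit in spelling out the uniqueness argument and the verification that the candidate $S$ is a valid column-composition tableau (checks (a)--(c)), whereas the paper simply asserts these ``by construction''; but the underlying idea is identical.
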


\begin{proof}
	If such $S$ exists, then \eqref{eq:join} holds by construction. Suppose that \eqref{eq:join} holds. Then we can define $S$ as the labeled column composition tableau obtaining by decreasing the size of each column of $S_2$ by $\asc(S_1; S_2) + \lvert l(S_1) \rvert$ and then concatenating it to $S_1$, also concatenating their words. Equation \eqref{eq:join} ensures that the result is still a column-composition tableau.

	It is now immediate that $\spl(S) = (S_1, S_2)$ and that such $S$ is unique.
\end{proof}

The following lemma is crucial to ensure that our sign-reversing, weight-preserving bijection is well defined.

\begin{lemma}
	\label{splitlemma}
	Let $S_1$, $S$ be labeled column-composition tableaux, and let $\spl(S) = (S_2, S_3)$. Then $S_1$ can join $S_2$ if and only if it can join $S$.
\end{lemma}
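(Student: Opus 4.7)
The plan is to compare the two join inequalities side by side. By the preceding lemma, $S_1$ can join a labeled column-composition tableau $T$ precisely when
\[ c_1(T) \geq c_\ell(S_1) + \asc(S_1; T) + \lvert l(S_1) \rvert. \]
Applying this criterion with $T = S_2$ and then with $T = S$, the goal is to show that both resulting inequalities are identical, so that each holds if and only if the other does.

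The key observation comes directly from the definition of $\spl$. When $\spl(S) = (S_2, S_3)$, the tableau $S_2$ is obtained as the portion of $S$ strictly before its first vertical bar, taken without modification; the added $d + \lvert l^1 \rvert$ cells are placed only on top of the columns of $S_3$, never on those of $S_2$. Consequently, $S$ and $S_2$ share their first column entirely, giving $c_1(S_2) = c_1(S)$, and they share the first letter of their base word, giving $w(S_2)_1 = w(S)_1$. The latter immediately yields
\[ \asc(S_1; S_2) = \asc(w(S_1) \, w(S_2)_1) = \asc(w(S_1) \, w(S)_1) = \asc(S_1; S), \]
while $\lvert l(S_1) \rvert$ and $c_\ell(S_1)$ depend only on $S_1$. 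Substituting these identities, the join inequality for $T = S_2$ coincides term-by-term with the one for $T = S$, so the equivalence in the lemma follows.

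No substantial obstacle is expected: the statement really amounts to verifying that the only two pieces of information about the joining target that appear in the join criterion, namely the height of its first column and its first base-word letter, are exactly the data preserved by taking the first factor of $\spl$. Once this is spelled out, the identical form of the two inequalities is immediate.
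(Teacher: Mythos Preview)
Your proof is correct and follows the same approach as the paper's own argument. The paper's proof is terser, citing only $c_1(S_2) = c_1(S)$ and invoking the join criterion~\eqref{eq:join}; you make explicit the additional (equally trivial) observation that $w(S_2)_1 = w(S)_1$, which is needed so that $\asc(S_1;S_2) = \asc(S_1;S)$.
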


\begin{proof}
	By construction, $c_1(S_2) = c_1(S)$, so \eqref{eq:join} holds for $S_1$ and $S_2$ if and only if it holds for $S_1$ and $S$.
\end{proof}

We can now define our bijection as follows.

\begin{definition}
	Given $T = (T_1,\dots, T_r) \in \LC_{\lambda,\eta}^\gamma$, define $\psi(T)$ by the following process:
	\begin{enumerate}
		\item if $r=0$, then $\psi(T) = T$;
		\item if $T_1$ can split, then $\psi(T) = (\spl(T_1), T_2,\dots, T_r)$;
		\item if $T_1$ cannot split and $T_1$ can join $T_2$, then $\psi(T) = (\join(T_1, T_2), T_3,\dots, T_r)$;
		\item otherwise we inductively define $\psi(T) = (T_1, \psi(T_2, \dots, T_r))$.
	\end{enumerate}
\end{definition}

\begin{theorem}
	\label{fixedpointtheorem}
	Let
	\begin{align*}
		U_{\lambda,\eta}^\gamma = \Big\{ T \in \LC_{\lambda,\eta}^\gamma \mid T & \text{ has no vertical bars, and for all $i$}                                         \\
		                                                                        & c_1(T_{i+1}) < c_{\ell(T_i)} (T_i) + \asc(T_i;T_{i+1}) + \lvert l(T_i) \rvert \Big\}.
	\end{align*}
	Then \[ D_{\lambda,\eta}^\gamma (q) = \sum_{T \in U_{\lambda,\eta}} q^{\weight(T)}. \]
\end{theorem}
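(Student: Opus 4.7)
The plan is to show that $\psi$ is a well-defined, weight-preserving, sign-reversing involution on $\LC_{\lambda,\eta}^\gamma$, whose set of fixed points is exactly $U_{\lambda,\eta}^\gamma$. Combined with the previous proposition expressing $D_{\lambda,\eta}^\gamma$ as a signed sum over $\LC_{\lambda,\eta}^\gamma$, the theorem would follow immediately: all non-fixed points cancel in pairs, leaving only the positive contribution from $U_{\lambda,\eta}^\gamma$.

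First I would verify that $\psi$ is well-defined and that its image still lies in $\LC_{\lambda,\eta}^\gamma$. The four cases are mutually exclusive and exhaustive, and in case (2) the leftmost piece $S_1$ produced by $\spl(T_1)$ inherits $c_1(S_1) = c_1(T_1) = 0$, so it remains in $\overline{\CC}$; similarly in case (3) the joined tableau $\join(T_1, T_2)$ preserves $c_1 = 0$ since $T_1$ begins the sequence. In the recursive case (4) one applies the algorithm to the tail, which is handled by induction on $r$.

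Next I would check the weight-preserving and sign-reversing properties. Weight preservation in case (2) is exactly Proposition~\ref{weightpreserving}; in case (3) it follows because $\join$ is the inverse operation of $\spl$; in case (4) it is immediate by induction. For the sign, observe that $\spl$ replaces a single tableau of type $\nu$ of length $\ell(\nu)$ by two tableaux of lengths $1$ and $\ell(\nu)-1$, so $\sign$ changes by a factor of $(-1)^0(-1)^{\ell(\nu)-2}/(-1)^{\ell(\nu)-1} = -1$; the same calculation shows $\join$ reverses the sign, and case (4) is handled by induction.

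The main technical step is showing $\psi$ is an involution. In case (2), the output $S_1$ has no bars, so cannot split; but by construction $S_1$ can join $S_2$ (since $\spl$ and $\join$ are inverses), so applying $\psi$ again triggers case (3) and returns $T$. In case (3), the joined tableau $\join(T_1,T_2)$ has at least one bar, so it splits and returns $T$. The subtle case is (4): I must ensure that, after the recursive call produces some $(T_2',T_3',\ldots)$ from $(T_2,T_3,\ldots)$, applying $\psi$ to the whole tuple again lands back in case (4) with the same $T_1$. This is exactly what Lemma~\ref{splitlemma} guarantees: whether the recursion splits $T_2$ (yielding $(S_2,S_3,T_3,\ldots)$ with $c_1(S_2)=c_1(T_2)$) or joins $T_2$ with $T_3$ (yielding $(\join(T_2,T_3),T_4,\ldots)$, which also preserves $c_1$), the value $c_1$ of the new second tableau equals $c_1(T_2)$, so the join condition~\eqref{eq:join} between $T_1$ and the new second tableau is exactly the same as between $T_1$ and $T_2$, hence still fails. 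Thus case (4) applies again, and involutivity follows by induction on $r$. I expect this step, requiring careful bookkeeping that the recursion never creates a joinable pair where there wasn't one before, to be the main obstacle.

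Finally, reading off the fixed points: $T$ is fixed iff at every level of the recursion we are in case (1) or case (4), which by induction is equivalent to saying that for every $i$, $T_i$ has no bars (so cannot split) and $c_1(T_{i+1}) < c_{\ell(T_i)}(T_i) + \asc(T_i;T_{i+1}) + \lvert l(T_i)\rvert$ (so cannot join $T_{i+1}$). These are precisely the defining conditions of $U_{\lambda,\eta}^\gamma$, completing the argument.
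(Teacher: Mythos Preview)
Your proposal is correct and follows essentially the same approach as the paper: both arguments verify that $\psi$ is a weight-preserving, sign-reversing involution using Proposition~\ref{weightpreserving} and Lemma~\ref{splitlemma}, and then read off the fixed points. Your version frames the involutivity check as an induction on $r$, while the paper unrolls the recursion and argues directly at the first index $i$ where a split or join occurs; these are equivalent presentations, and your added checks on well-definedness (e.g.\ that $c_1$ of the first tableau stays zero) are implicit in the paper but good to make explicit.
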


\begin{proof}
	Since we are using the split map and its inverse, Proposition~\ref{weightpreserving} ensures that $\psi$ is weight-preserving. Furthermore, $\spl$ and $\join$ remove or adds a single vertical bar, so $\psi$ it is sign-reversing.

	We have to make sure that $\psi$ is an involution. Let $T = (T_1, \dots, T_r) \in \LC_{\lambda,\eta}^\gamma$. If $\psi(T) = T$, then clearly $\psi^2(T) = T$.

	Suppose that $\psi(T) = (T_1, \dots, T_{i-1}, \spl(T_i), T_{i+1}, \dots, T_r)$. Let $\spl(T_i) = (S_1, S_2)$. By construction, $T_1, \dots, T_{i-1}$ cannot split, and $T_j$ cannot join $T_{j+1}$ for $j < i$. By Lemma~\ref{splitlemma}, since $T_{i-1}$ cannot join $T_i$, it also cannot join $S_1$. By construction, $S_1$ and $S_2$ can join, so $\psi^2(T) = T$.

	Suppose instead that $\psi(T) = (T_1, \dots, T_{i-1}, \join(T_i, T_{i+1}), T_{i+2}, \dots, T_r)$. By construction, $T_1, \dots, T_{i-1}$ cannot split, and $T_j$ cannot join $T_{j+1}$ for $j < i$.	By Lemma~\ref{splitlemma}, since $T_{i-1}$ cannot join $T_i$, it also cannot join $\join(T_i, T_{i+1})$. By construction, $\join(T_i, T_{i+1})$ can split, so again $\psi^2(T) = T$ and $\psi$ is an involution.

	The set of fixed points is the set of labeled column composition whose parts cannot split or be joined, and the conditions for that to hold are exactly the conditions given in the definition of $U^{\gamma}_{\lambda,\eta}$.
\end{proof}

\begin{figure}[!ht]
	\centering
	\begin{align*}
		\scalebox{.6}{
		\begin{tikzpicture}[ xscale=-1]
				\partitionfr{3}
				\draw[line width = .75mm] (-.25,1) -- (3.25,1);
				\draw (.5,.5) node {\Large$3$};
				\draw (1.5,.5) node {\Large$4$};
				\draw (2.5,.5) node {\Large$2$};
				\draw (.5,-.5) node {\Large$0$};
				\draw (1.5,-.5) node {\Large$2$};
				\draw (2.5,-.5) node {\Large$0$};
			\end{tikzpicture}} &  &
		\scalebox{.6}{
		\begin{tikzpicture}[ xscale=-1]
				\partitionfr{1,1,1}
				\draw[line width = .75mm] (-.25,1) -- (1.25,1);
				\draw (.5,.5) node {\Large$1$};
				\draw (.5,-.5) node {\Large$2$};
			\end{tikzpicture}} &  &
		\scalebox{.6}{
		\begin{tikzpicture}[ xscale=-1]
				\partitionfr{4}
				\draw[line width = .75mm] (-.25,1) -- (4.25,1);
				\draw (.5,.5) node {\Large$2$};
				\draw (1.5,.5) node {\Large$1$};
				\draw (2.5,.5) node {\Large$1$};
				\draw (3.5,.5) node {\Large$3$};
				\draw (.5,-.5) node {\Large$0$};
				\draw (1.5,-.5) node {\Large$1$};
				\draw (2.5,-.5) node {\Large$1$};
				\draw (3.5,-.5) node {\Large$2$};
			\end{tikzpicture}} &  &
		\scalebox{.6}{
			\begin{tikzpicture}[ xscale=-1]
				\partitionfr{1,1,1}
				\draw[line width = .75mm] (-.25,1) -- (1.25,1);
				\draw (.5,.5) node {\Large$2$};
				\draw (.5,-.5) node {\Large$3$};
			\end{tikzpicture}}
	\end{align*}
	\caption{A fixed point of $\psi$}
	\label{fig:fixedpoint}
\end{figure}

\begin{example}
	We can read the type of the element in Figure~\ref{fig:fixedpoint} as follows. Since the rows have lengths $3,1,4,1$ respectively, we know that $\eta = (4,3,1,1)$. Since the words in the base rows are $(2,4,3),(1),(3,1,1,2),(2)$, which have multiplicities given by $1^3 2^3 3^1 4^1$, we have $\lambda = (3,3,1,1).$ Lastly, the labels underneath the rows rearrange to $(3,2,2,2,1,1,0,0,0)$, meaning $\gamma = (3,2,2,2,1,1).$
\end{example}

\section{A bijection to ascent polyominoes}

In this section we define an intermediate family of objects that will turn out handy to describe our bijection between the fixed points of $\psi$ and $\gamma$-parking functions, namely \emph{ascent labeled polyominoes}.

\begin{definition}
	\label{LPPDefinition}
	An $m \times n$ ascent labeled parallelogram polyomino is a triple $(P,Q,w)$ such that $(P,Q)$ is an $m \times n$ parallelogram polyomino (as in Definition~\ref{def:polyomino}), and $w \in \mathbb{N}^n$ is such that if $Q$ has no East steps on the line $y = i-1$ (or has only $1$ East step if $i=1$, since the first step of $Q$ must be East), then $w_i \geq w_{i+1}$.
\end{definition}

\begin{definition}
	\label{def:ascent-polyominoes}
	Let $(P, Q, w)$ be an ascent labeled parallelogram polyomino. Let $\lambda \vDash n$ be the content of $w$ (that is, $m(w) = 1^{\lambda_1} 2^{\lambda_2}\cdots$), and let $\eta \vdash n$ be the partition whose block sizes are the lengths of the maximal streaks of North steps in $P$, in some order. Let \[ \beta_i \coloneqq \# \{ \text{East steps of $Q$ on the line $y = i-1$} \} - \chi(i=1) - \chi(i \in \Asc(w)), \] and let $\gamma$ be the partition obtained by rearranging $(\beta_1, \dots, \beta_n)$ and removing zeros.

	We define $\type(P, Q, w) = (\lambda, \eta, \gamma)$, and call $\PP_{\lambda, \eta}^\gamma$ the set of ascent labeled polyominoes of type $(\lambda, \eta, \gamma)$. Note that the height is fixed by the type but the width isn't, as it depends on the number of ascents in $w$.
\end{definition}

We will now give a bijection $\varphi \colon U^{\gamma}_{\lambda,\eta} \rightarrow \PP^\gamma_{\lambda,\eta}$ from the set of fixed points of $\psi$ of given type, and ascent labeled parallelogram polyominoes of the same type.

In order to describe the bijection, for $T = (T_1, \dots, T_r)\in U^{\gamma}_{\lambda,\eta}$, we need to define a triple $\varphi(T) = (P(T),Q(T),w(T))$ corresponding to the polyomino and its labels.

\begin{definition}
	Let $T = (T_1, \dots, T_r)\in U^{\gamma}_{\lambda,\eta}$, with $T_i = (C^i, w^i, l^i)$. First, we define $w(T) \coloneqq w^1 \cdots w^r$ (the concatenation).

	Next, let $l(T) = l^1 \cdots l^r$ and let $r_i(T) = l(T)_i + \chi(i \in \Asc(w))$. We define \[ Q(T) \coloneqq E E^{r_1(T)} N E^{r_2(T)} N \cdots E^{r_n(T)} N, \] that is, the path with $r_i(T)$ East step on the line $y=i-1$, plus $1$ if $i=1$.

	Finally, let $s_i(T) = c_{\ell(T_i)}(T_i) + \asc(T_i; T_{i+1}) + \lvert l^i \rvert - c_1(T_{i+1})$, which is guaranteed to be positive by the fact that $T$ is a fixed point of $\psi$. We set \[ P(T) \coloneqq N^{\ell(T_1)} E^{s_1(T)} N^{\ell(T_2)} E^{s_2(T)} \cdots N^{\ell(T_n)} E^{s_n(T)} E. \]
\end{definition}

\begin{figure}[!ht]
	\centering
	\begin{align*}
		\scalebox{.6}{
			\begin{tikzpicture}[scale=1]
				\dyckpathmn{0,0}{15}{9}{0};
				\draw[line width=3pt,red] (-.2,0) -- (-.2,3)  ;
				\draw[line width=3pt,red] (.8,3) -- (.8,4)  ;
				\draw[line width=3pt,red] (5.8,4) -- (5.8,8) ;
				\draw[line width=3pt,red] (8.8,8) -- (8.8,9) ;
				\draw[line width=3pt,green] (1,-.2) -- (2,-.2)  ;
				\draw[line width=3pt,green] (2 ,1-.2) -- (4,1-.2)   ;
				\draw[line width=3pt,green] (4,3-.2) -- (7,3-.2)  ;
				\draw[line width=3pt,green] (7,4-.2) -- (9,4-.2)   ;
				\draw[line width=3pt,green] (9,5-.2) -- (10,5-.2)   ;
				\draw[line width=3pt,green] (10,6-.2) -- (12,6-.2)  ;
				\draw[line width=3pt,green] (12,8-.2) -- (15,8-.2)  ;
				\dyckpathmnnogrid{0,0}{15}{9}{1,1,1,0,1,0,0,0,0,0,1,1,1,1,0,0,0,1,0,0,0,0,0,0};
				\draw (.5,.5) node {\LARGE$2$};
				\draw (.5,1.5) node {\LARGE$4$};
				\draw (.5,2.5) node {\LARGE$3$};
				\draw (1.5,3.5) node {\LARGE$1$};
				\draw (6.5,4.5) node {\LARGE$3$};
				\draw (6.5,5.5) node {\LARGE$1$};
				\draw (6.5,6.5) node {\LARGE$1$};
				\draw (6.5,7.5) node {\LARGE$2$};
				\draw (9.5,8.5) node {\LARGE$2$};
				\dyckpathmnnogrid{0,0}{15}{9}{0,0,1,0,0,1,1,0,0,0,1,0,0,1,0,1,0,0,1,1,0,0,0,1};
			\end{tikzpicture}}
	\end{align*}
	\caption{The image $\varphi(T)$ of the fixed point in Figure~\ref{fig:fixedpoint}}
	\label{fig:polyomino}
\end{figure}

\begin{example}
	We demonstrate the bijection for the sequence of labeled column composition tableaux $T \in U^{(3,2,2,2,1)}_{(3,3,1,1),(4,3,1,1)}$ appearing in Figure~\ref{fig:fixedpoint}. We have  $w = w(T) = (2,4,3,1,3,1,1,2,2)$ and $l = l(T) = (0,2,0,2,2,1,1,0,3)$.

	We have $\Asc(w) = \{1, 4, 7\}$, so we get \[ Q(T) = E E^{0+1} N E^{2+0} N E^{0+0} N E^{2+1} N E^{2+0} N E^{1+0} N E^{1+1} N E^{0+0} N E^{3+0} N. \]

	Finally, we have $s_1(T) = 0 + 1 + 2 - 2 = 1$, $s_2(T) = 2 + 1 + 2 - 0 = 5$, $s_3(T) = 0 + 1 + 4 - 2 = 3$, and $s_4(T) = 2 + 0 + 3 - 0 = 5$, so we end up with \[ P(T) = N^3 E^1 N^1 E^5 N^4 E^3 N^1 E^5 E. \]

	We refer to Figure~\ref{fig:polyomino} for the image of $T$ under $\varphi$. The North steps of $P(T)$ are labeled with the word $w$ written from bottom to top. The North segments are also distinguished in our picture with a red line on its left. We see that the vertical segments of $P(T)$ have lengths $3,1,4,1$, which rearranges to $\eta = (4,3,1,1)$. The green segments highlight the horizontal segments of $Q(T)$ (ignoring the first, mandatory East step), and along each horizontal line, we have lengths $(1,2,0,3,2,1,2,0,3)$. Since $\Asc(w) = \{1,4,7\}$, we subtract term by term to see that \[ (1,2,0,3,2,1,2,0,3) - (1,0,0,1,0,0,1,0,0)= (0,2,0,2,2,1,1,0,3) \] has nonzero parts that rearrange to $\gamma =  (3,2,2,2,1,1)$, as expected.
\end{example}

\begin{figure}[!ht]
	\centering
	\begin{center}\begin{tabular}{ccc}
			\scalebox{.4}{
				\begin{tikzpicture}
					\dyckpathmn{0,0}{7}{9}{1,1,1,0,1,0,1,1,1,1,0,0,1,0,0,0};
					\dyckpathmnnogrid{0,0}{7}{9}{0,1,0,0,1,0,1,1,0,1,1,0,1,1,0,1};
					\draw (.5,.5) node {\Large$2$};
					\draw (.5,1.5) node {\Large$2$};
					\draw (.5,2.5) node {\Large$2$};
					\draw (1.5,3.5) node {\Large$1$};
					\draw (2.5,4.5) node {\Large$1$};
					\draw (2.5,5.5) node {\Large$3$};
					\draw (2.5,6.5) node {\Large$1$};
					\draw (2.5,7.5) node {\Large$2$};
					\draw (4.5,8.5) node {\Large$1$};
				\end{tikzpicture}
			} &
			\scalebox{.4}{
				\begin{tikzpicture}
					\dyckpathmn{0,0}{7}{9}{1,1,1,0,1,0,1,1,1,1,0,0,1,0,0,0};
					\dyckpathmnnogrid{0,0}{7}{9}{0,1,0,0,1,0,1,1,0,1,1,0,1,1,0,1};
					\draw (.5,.5) node {\Large$2$};
					\draw (.5,1.5) node {\Large$2$};
					\draw (.5,2.5) node {\Large$2$};
					\draw (3.5,3.5) node {\Large$1$};
					\draw (3.5,4.5) node {\Large$1$};
					\draw (3.5,5.5) node {\Large$3$};
					\draw (3.5,6.5) node {\Large$1$};
					\draw (3.5,7.5) node {\Large$2$};
					\draw (5.5,8.5) node {\Large$1$};
					\filldraw[line width=3pt,red, fill=red, fill opacity = 0.1] (0,0) -- (1,0) -- (1,3) -- (0,3) --cycle;
					\filldraw[line width=3pt,red, fill=red, fill opacity = 0.1] (1,3) -- (4,3) -- (4,4) -- (1,4) --cycle;
					\filldraw[line width=3pt,red, fill=red, fill opacity = 0.1] (2,4) -- (4,4) -- (4,8) -- (2,8) --cycle;
					\filldraw[line width=3pt,red, fill=red, fill opacity = 0.1] (4,8) -- (6,8) -- (6,9) -- (4,9) --cycle;
				\end{tikzpicture}}
			  &
			\scalebox{.4}{
				\begin{tikzpicture}[rotate=-90]
					\dyckpathmn{0,0}{7}{9}{1,1,1,0,1,0,1,1,1,1,0,0,1,0,0,0};
					\dyckpathmnnogrid{0,0}{7}{9}{0,1,0,0,1,0,1,1,0,1,1,0,1,1 ,0,1};
					\draw (.5,.5) node {\Large$2$};
					\draw (.5,1.5) node {\Large$2$};
					\draw (.5,2.5) node {\Large$2$};
					\draw (3.5,3.5) node {\Large$1$};
					\draw (3.5,4.5) node {\Large$1$};
					\draw (3.5,5.5) node {\Large$3$};
					\draw (3.5,6.5) node {\Large$1$};
					\draw (3.5,7.5) node {\Large$2$};
					\draw (5.5,8.5) node {\Large$1$};
					\filldraw[line width=3pt,red, fill=red, fill opacity = 0.1] (0,0) -- (1,0) -- (1,3) -- (0,3) --cycle;
					\filldraw[line width=3pt,red, fill=red, fill opacity = 0.1] (1,3) -- (4,3) -- (4,4) -- (1,4) --cycle;
					\filldraw[line width=3pt,red, fill=red, fill opacity = 0.1] (2,4) -- (4,4) -- (4,8) -- (2,8) --cycle;
					\filldraw[line width=3pt,red, fill=red, fill opacity = 0.1] (4,8) -- (6,8) -- (6,9) -- (4,9) --cycle;
				\end{tikzpicture}}
			\\1.&2.&3.\\
		\end{tabular}\end{center}
	\begin{center}
		\begin{align*}
			\scalebox{.6}{
			\begin{tikzpicture}[ xscale=-1]
					\partitionfr{3}
					\draw[line width = .75mm] (-.25,1) -- (3.25,1);
					\draw (.5,.5) node {\Large$2$};
					\draw (1.5,.5) node {\Large$2$};
					\draw (2.5,.5) node {\Large$2$};
					\draw (.5,-.5) node {\Large$1$};
					\draw (1.5,-.5) node {\Large$2$};
					\draw (2.5,-.5) node {\Large$0$};
				\end{tikzpicture}} &  &
			\scalebox{.6}{
			\begin{tikzpicture}[ xscale=-1]
					\partitionfr{1,1,1}
					\draw[line width = .75mm] (-.25,1) -- (1.25,1);
					\draw (.5,.5) node {\Large$1$};
					\draw (.5,-.5) node {\Large$0$};
				\end{tikzpicture}} &  &
			\scalebox{.6}{
			\begin{tikzpicture}[ xscale=-1]
					\partitionfr{4,4}
					\draw[line width = .75mm] (-.25,1) -- (4.25,1);
					\draw (.5,.5) node {\Large$2$};
					\draw (1.5,.5) node {\Large$1$};
					\draw (2.5,.5) node {\Large$3$};
					\draw (3.5,.5) node {\Large$1$};
					\draw (.5,-.5) node {\Large$0$};
					\draw (1.5,-.5) node {\Large$0$};
					\draw (2.5,-.5) node {\Large$0$};
					\draw (3.5,-.5) node {\Large$0$};
				\end{tikzpicture}} &  &
			\scalebox{.6}{
				\begin{tikzpicture}[ xscale=-1]
					\partitionfr{1,1}
					\draw[line width = .75mm] (-.25,1) -- (1.25,1);
					\draw (.5,.5) node {\Large$1$};
					\draw (.5,-.5) node {\Large$1$};
				\end{tikzpicture}}
		\end{align*}
		\\4.
	\end{center}
	\caption{A pictorial description of $\varphi^{-1}$}
	\label{fig:CCT-to-ascent-polyomino}
\end{figure}

\begin{theorem}
	\label{bijectiontheorem}
	The map $\varphi(T) = (P(T),Q(T),w(T))$ is a bijection between $U_{\lambda,\eta}^\gamma$ and $\PP_{\lambda,\eta}^\gamma$ such that $\weight(T) = \area(\varphi(T))$, that is, it is weight-preserving.
\end{theorem}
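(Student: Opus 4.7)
The plan is to prove the three required properties of $\varphi$ in turn: that $\varphi(T) \in \PP_{\lambda,\eta}^\gamma$ whenever $T \in U_{\lambda,\eta}^\gamma$, that $\varphi$ admits an explicit inverse, and that $\weight(T) = \area(\varphi(T))$. The definition of $\varphi$ is essentially a dictionary that records each piece of data of a fixed point as a corresponding geometric feature, so the bijection is built into the construction; the real content is that the fixed-point conditions on $T$ correspond exactly to the validity conditions on an ascent labeled polyomino.

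For well-definedness, I would first check that $P(T)$ and $Q(T)$ go from $(0,0)$ to $(m,n)$ with $n = \lvert \eta \rvert$ (total North steps) and $m = 1 + \lvert \gamma \rvert + \asc(w(T))$ (total East steps in $Q(T)$, hence also in $P(T)$). The content of $w(T)$ is $\lambda$ by the $\WV(\lambda,\beta)$ condition, the streak lengths of $P(T)$ rearrange to $\eta$ since they are the $\lvert \nu^i \rvert$, and the corrected horizontal step counts of $Q(T)$ rearrange to $\gamma$ by definition of $r_i(T)$. The labeling condition of Definition~\ref{LPPDefinition} is automatic: if $Q(T)$ has no East step in row $i-1$ (or only the mandatory one when $i=1$), then $r_i(T) = 0$, forcing $i \notin \Asc(w(T))$ and hence $w_i \geq w_{i+1}$. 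The only nontrivial point is that $P(T)$ lies strictly above $Q(T)$ between endpoints, and this is exactly what the fixed-point condition $c_1(T_{i+1}) < c_{\ell(T_i)}(T_i) + \asc(T_i;T_{i+1}) + \lvert l(T_i) \rvert$ guarantees, since it is equivalent to $s_i(T) > 0$, so every intermediate East block of $P(T)$ has positive length.

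For the inverse, given $(P,Q,w) \in \PP_{\lambda,\eta}^\gamma$, I would read off the maximal North streaks of $P$ to recover $\ell(T_1), \dots, \ell(T_r)$ and partition $w$ accordingly into $w^1,\dots,w^r$. Subtracting $\chi(i=1) + \chi(i \in \Asc(w))$ from the East step count of $Q$ on row $y=i-1$ recovers $l(T)_i$ and hence each $l^i$; the positions of ascents inside each $w^i$ then determine the partition $\nu^i$ by grouping columns of the eventual tableau appropriately, and the East steps $s_i$ of $P$ between consecutive North streaks together with the normalization $c_1(T_1) = 0$ recover all column heights $c^i$ via the relation defining $s_i(T)$. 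Since the tableaux $T_i$ in a fixed point of $\psi$ have no vertical bars, each $C^i$ has a single column block and its heights are uniquely determined by $c_1(T_i)$ plus a weakly increasing sequence, so nothing is lost. One verifies that the resulting $T$ lies in $U_{\lambda,\eta}^\gamma$ exactly because the inequality at each gap is strict.

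The hardest step is matching $\weight(T) = \area(\varphi(T))$, and I would do this by computing the area row-by-row. The number of cells between $P(T)$ and $Q(T)$ at height $i$ is the difference between the running East-step totals of $P(T)$ and $Q(T)$ at that height; summing these differences, subtracting the baseline $m + n - 1$, and reorganizing by which $T_i$ each row belongs to, the contribution from $T_i$ splits cleanly into three pieces. The piece coming from the column heights themselves produces $\lvert C^i \rvert$, the piece coming from the East shifts $s_j(T)$ for $j < i$ together with the ascents inside $w^i$ produces $\revmaj(w^i)$, and the piece coming from the label row of $Q(T)$ produces $u(l^i)$; this bookkeeping is exactly what Proposition~\ref{weightpreserving} foreshadows, since $\spl$ was designed to respect the same decomposition. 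The main obstacle is carrying out this row-by-row accounting cleanly: one must track how the ascents of $w$ and the labels $l$ interact with the accumulated East counts, and verify that the cross terms between distinct $T_i$'s cancel against the $m+n-1$ baseline subtraction.
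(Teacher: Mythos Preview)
Your overall strategy matches the paper's: verify that the fixed-point inequality is precisely the condition that $P(T)$ stays strictly above $Q(T)$, then argue weight preservation. However, there are two points where your write-up diverges from the paper and where you are either imprecise or leave real work undone.

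First, a structural confusion about fixed points. You write that the ascents inside $w^i$ ``determine the partition $\nu^i$'' and that the column heights of $C^i$ form ``$c_1(T_i)$ plus a weakly increasing sequence.'' Neither is right. In a fixed point there are \emph{no} vertical bars, so each $C^i$ has composition $\alpha^i = (\beta_i)$ consisting of a single part; hence $\nu^i = (\beta_i)$ automatically, and by the column-composition condition all the column heights in $C^i$ are \emph{equal} to $c_1(T_i)$. This actually makes the inverse simpler than you suggest: once you know $c_1(T_i)$ you know the whole tableau $C^i$, and the recursion $c_1(T_{i+1}) = c_1(T_i) + \asc(T_i;T_{i+1}) + \lvert l^i \rvert - s_i$ recovers everything from $c_1(T_1)=0$.

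Second, and more substantively, your weight argument is only a plan. You assert that a row-by-row count ``splits cleanly'' into $\lvert C^i \rvert + \revmaj(w^i) + u(l^i)$ and that cross terms cancel against the $m+n-1$ baseline, but you do not carry this out and you flag it yourself as the main obstacle. The paper avoids this bookkeeping entirely with a geometric device: it inscribes, for each maximal vertical segment of $P$, the largest rectangle inside the polyomino with that segment as a side, then draws a staircase path through the labels. Cells above the staircase contribute exactly $\sum_i \lvert C^i \rvert$, and cells below it are counted column-by-column as $(l_j + \chi(j \in \Asc(w)))$ times the number of base cells to the right of $w_j$, which is precisely $\revmaj(\vec{w}) + u(\vec{l})$. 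This rectangle-and-staircase decomposition is what makes the three contributions visible simultaneously and sidesteps the cross-term cancellation you would otherwise have to track; if you pursue the row-by-row route you should expect to rediscover an equivalent regrouping.
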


\begin{proof}
	We describe the inverse $\varphi^{-1}$ instead. Starting with $S = (P,Q,w) \in \PP_{\lambda, \eta}^\gamma$, for each maximal vertical segment in $P$, draw the maximal rectangle contained in $S$ that has that streak as one of the sides and whose perimeter does not contain any East step in $Q$. Then slide all of the labels in these vertical segments to the opposite side of the maximal rectangle (see Step 2 of Figure~\ref{fig:CCT-to-ascent-polyomino}).

	To construct $(T_1,\dots, T_{\ell(\lambda)})$, we can determine the tableaux by looking at the rectangles: if $T_i = (C^i, w^i, l^i)$, then $C^i$ is the column-composition tableau obtained by rotating the rectangle delimited by the $i^{\ith}$ vertical segment in $S$; $w^i$ is the sequence of labels appearing in the rectangle, read from bottom to top; if $l = l^1 \cdots l^{\ell(\lambda)}$ then $l_j + \chi(j \in \Asc(w))$ is the number of cells of $S$ in the $j^{\ith}$ row that are outside the maximal rectangle and whose bottom segment is an East step of $Q$. This is better seen by rotating $S$ by $90$ degrees clockwise (as in Figure~\ref{fig:CCT-to-ascent-polyomino}, Step 3); let us call this rotated picture $S'$.

	We should note that since $(P,Q,w)$ gives a parallelogram polyomino, the $i+1^{\ith}$ vertical segment in $P$ occurs strictly right of the $i^{\ith}$ vertical segment and also strictly left of $Q$. Using the translation to $T = \varphi^{-1}(S)$, this is equivalent to say that \[ c_1(T_{i+1}) < c_{\ell(T_i)}(T_{i}) + \sum_{j =1}^{\ell(T_i)} \chi(j \in \Asc(w^i w^{i+1})) + \sum_{j=1}^{\ell(T_i)} l_i \] or rather \[ c_1(T_{i+1}) < c_{\ell(T_i)}(T_{i}) + \asc(T_i;T_{i+1}) + \lvert l(T_i)\rvert. \]
	Since the defining property making $P$ a path above $Q$ with respect to $w$ converts directly to the defining relation for elements $T \in U_{\lambda,\eta}^\gamma$, then $\varphi$ is a bijection.

	\begin{figure}[!ht]
		\centering
		\begin{align*}
			\scalebox{.6}{
				\begin{tikzpicture}[rotate=-90]
					\dyckpathmn{0,0}{7}{9}{1,1,1,0,1,0,1,1,1,1,0,0,1,0,0,0};
					\dyckpathmnnogrid{0,0}{7}{9}{0,1,0,0,1,0,1,1,0,1,1,0,1,1,0,1};
					\draw (.5,.5) node {\Large$2$};
					\draw (.5,1.5) node {\Large$2$};
					\draw (.5,2.5) node {\Large$2$};
					\draw (3.5,3.5) node {\Large$1$};
					\draw (3.5,4.5) node {\Large$1$};
					\draw (3.5,5.5) node {\Large$3$};
					\draw (3.5,6.5) node {\Large$1$};
					\draw (3.5,7.5) node {\Large$2$};
					\draw (5.5,8.5) node {\Large$1$};
					\draw[line width=3pt,dashed,blue] (.5,.5) -- (.5,2.5)-- (3.5,2.5)--(3.5,7.5)--(5.5,7.5)--(5.5,8.5)--(6.5,8.5);
					\filldraw[line width=3pt,red, fill=red, fill opacity = 0.1] (0,0) -- (1,0) -- (1,3) -- (0,3) --cycle;
					\filldraw[line width=3pt,red, fill=red, fill opacity = 0.1] (1,3) -- (4,3) -- (4,4) -- (1,4) --cycle;
					\filldraw[line width=3pt,red, fill=red, fill opacity = 0.1] (2,4) -- (4,4) -- (4,8) -- (2,8) --cycle;
					\filldraw[line width=3pt,red, fill=red, fill opacity = 0.1] (4,8) -- (6,8) -- (6,9) -- (4,9) --cycle;
				\end{tikzpicture}}
		\end{align*}
		\caption{$\varphi$ is weight-preserving}
		\label{fig:weight-preserving}
	\end{figure}

	We are now going to show that $\varphi$ is weight-preserving. First, draw a path along the cells of $S'$ starting from the top left cell, and moving East if there are labels directly East, and moves South otherwise (see Figure~\ref{fig:weight-preserving}). Now the area can be computed by counting the number of unit cells with no dashed line through it, those cells covering the minimal area we remove as normalization. The area above the dashed line equals the weight contribution in the $T_i$ given by the cells above the base row.

	The cells below the dashed line will be counted in the following way: In $S'$, directly South of the label $w_j$, there are by construction $l_j + \chi(j \in \Asc(w))$ cells which are adjacent to the path on the left. Each of these cells have the same number of cells weakly East of them, namely the number of cells between $w_j$ and the next base row. The number of cells in these rows is then \[ (l_j + \chi(j \in \Asc(w))) \times \# \{ \text{cells in the same base and on the right of $w_j$} \}. \] Taking the sum over all $j$, we get the number of cells below the dashed line. But now the second and the third factor contributing to $\weight(T)$ are exactly \[ \sum_{j \in \Asc(w)} \# \{ \text{cells in the same base and on the right of $w_j$} \} \] and \[ \sum_j l_j \times \# \{ \text{cells in the same base and on the right of $w_j$} \} \] (as we described in Example~\ref{ex:CCT-computation}), so indeed $\weight(T) = \area(\varphi(T))$, as desired.
\end{proof}

In the end, we get the following result.

\begin{theorem}
	\[ \left. \Delta_{m_\gamma} \Xi e_\lambda \right\rvert_{t=1} =\sum_{\eta \vdash n } e_\eta \sum_{S \in \PP^{\gamma}_{\lambda,\eta} } q^{\area(S)}. \]
\end{theorem}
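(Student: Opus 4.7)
The plan is essentially to assemble the three main components established in the preceding sections into a single chain of equalities. Since $\widetilde{\Delta}_{m_\gamma} \Xi e_\lambda = \left. \Delta_{m_\gamma} \Xi e_\lambda \right\rvert_{t=1}$ by definition, it suffices to show that the coefficient of each $e_\eta$ in $\widetilde{\Delta}_{m_\gamma} \Xi e_\lambda$ equals the area generating function over $\PP^{\gamma}_{\lambda,\eta}$.

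First, I would invoke Proposition \ref{firstexpansion} to write
\[
\widetilde{\Delta}_{m_\gamma} \Xi e_\lambda = \sum_{\eta \vdash n} D^{\gamma}_{\lambda,\eta}(q) \, e_\eta,
\]
which reduces the problem to identifying $D^{\gamma}_{\lambda,\eta}(q)$ with $\sum_{S \in \PP^{\gamma}_{\lambda,\eta}} q^{\area(S)}$. Next, I would apply the combinatorial reformulation established in the proposition immediately following Definition \ref{def:weight}, which rewrites $D^{\gamma}_{\lambda,\eta}(q)$ as the signed, weighted sum $\sum_{T \in \LC^{\gamma}_{\lambda,\eta}} \sign(T)\, q^{\weight(T)}$ over sequences of labeled column-composition tableaux.

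Then I would apply Theorem \ref{fixedpointtheorem}: the weight-preserving, sign-reversing involution $\psi$ cancels all signed contributions outside the fixed-point set $U^{\gamma}_{\lambda,\eta}$, leaving
\[
D^{\gamma}_{\lambda,\eta}(q) = \sum_{T \in U^{\gamma}_{\lambda,\eta}} q^{\weight(T)}.
\]
Finally, Theorem \ref{bijectiontheorem} supplies the weight-preserving bijection $\varphi \colon U^{\gamma}_{\lambda,\eta} \to \PP^{\gamma}_{\lambda,\eta}$ satisfying $\area(\varphi(T)) = \weight(T)$, so reindexing via $\varphi$ yields
\[
\sum_{T \in U^{\gamma}_{\lambda,\eta}} q^{\weight(T)} = \sum_{S \in \PP^{\gamma}_{\lambda,\eta}} q^{\area(S)}.
\]
Chaining the equalities and summing over $\eta \vdash n$ gives the theorem.

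All the substantive work has been done: the plethystic expansion and forgotten-symmetric-function manipulations in Section \ref{sec:preliminary-manipulations}, the construction of $\spl$ and $\join$ together with the verification that $\psi$ is a well-defined involution, and the geometric description of $\varphi$ and $\varphi^{-1}$ along with the comparison of $\weight$ to $\area$ cell-by-cell. Consequently, this final theorem is a direct corollary and there is no real obstacle remaining; the only care needed is in citing the intermediate results in the correct order and observing that the specialization $t=1$ commutes with taking the $e_\eta$ coefficient, which is immediate since the $e_\eta$ form a basis over $\mathbb{Q}(q,t)$ and the coefficients $D^{\gamma}_{\lambda,\eta}$ are already polynomials in $q$ alone.
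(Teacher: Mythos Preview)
Your proposal is correct and matches the paper's approach exactly: the paper states this theorem immediately after Theorem~\ref{bijectiontheorem} with no separate proof, treating it as a direct corollary of the chain Proposition~\ref{firstexpansion} $\to$ the signed-sum proposition $\to$ Theorem~\ref{fixedpointtheorem} $\to$ Theorem~\ref{bijectiontheorem}, precisely as you outline.
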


\section{\texorpdfstring{$\gamma$}{gamma}-Parking Functions}

In this section we complete the construction by giving a weight-preserving bijection between $\PP_{\lambda,\eta}^\gamma$ and the set \[ \PF_{\lambda, \eta}^\gamma \coloneqq \{ p \in \PF_{\lambda}^\gamma \mid \eta(p) = \eta \} \] of $\gamma$-parking function with content $\lambda$ and $e$-composition $\eta$, part of which we have already seen in Definition~\ref{def:e-composition}.

It is slightly easier to describe the inverse, so that is what we will do.

\begin{definition}
	\label{def:iota}
	Let $p = (P, Q, w)$ be a labeled $\gamma$-Dyck path. We define $\iota(p) = (\overline{P}, \overline{Q}, w)$, where $\overline{P}$ and $\overline{Q}$ are the paths obtained from $P$ and $Q$ respectively by removing, for every $i \not \in \Asc(w)$, the first East step of $P$ on the line $y=i$ and the first East step of $Q$ on the line $y=i-1$.
\end{definition}

See Figure~\ref{fig:iota} for an example.

\begin{theorem}
	The map $\iota$ defines a bijection between $\PF_{\lambda, \eta}^\gamma$ and $\PP_{\lambda,\eta}^\gamma$ such that $\area(p) = \area(\iota(p))$.
\end{theorem}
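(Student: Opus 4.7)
The plan is to check that $\iota$ lands in $\PP_{\lambda,\eta}^\gamma$, construct its inverse explicitly, and verify the area identity; each step reduces to tracking East step counts under the paired removal.

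First, I would confirm that every prescribed East step exists. For $i \notin \Asc(w)$ with $i < n$, the inequality $w_i \geq w_{i+1}$ combined with the rule that consecutive North steps carry strictly increasing labels forces at least one East step of $P$ between its $i^{\ith}$ and $(i+1)^{\ith}$ North steps. For $i = n$, the path $P$ must end with an East step, because otherwise $P$ and $Q$ would be forced to touch before the terminal lattice point. The $\gamma$-Dyck condition that $(\alpha_1 - 1, \alpha_2, \dots, \alpha_n)$ rearranges to $\gamma + 1^n$ gives $\alpha_1 \geq 2$ and $\alpha_i \geq 1$ for $i \geq 2$, so $Q$ has the required East step on $y = i-1$. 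Each paired removal leaves both paths unchanged before the removed steps and shifts their tails left by one unit after, so $\overline{P}$ and $\overline{Q}$ inherit the strictly-above relation from $P$ and $Q$.

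Next, I would verify the type. The content of $w$ gives $\lambda$, and Definition~\ref{def:e-composition} directly identifies the multiset of maximal North streak lengths of $\overline{P}$ with $\eta(p) = \eta$. For $\gamma$, letting $\overline{\alpha}_i$ denote the number of East steps of $\overline{Q}$ on $y = i-1$, we have $\overline{\alpha}_i = \alpha_i - \chi(i \notin \Asc(w))$, so using $\chi(i \in \Asc(w)) + \chi(i \notin \Asc(w)) = 1$,
\[
\beta_i = \overline{\alpha}_i - \chi(i = 1) - \chi(i \in \Asc(w)) = \alpha_i - 1 - \chi(i = 1).
\]
Hence $(\beta_1, \dots, \beta_n)$ rearranges to $\gamma$ padded with zeros, and the partition recovered by removing zeros is $\gamma$. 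The inverse $\iota^{-1}$ inserts, for each $i \notin \Asc(w)$, one East step immediately after the $i^{\ith}$ North step of the top path and one East step immediately after the $(i-1)^{\ith}$ North step of the bottom path (at the very start for $i = 1$); the analogous check shows the output is a labeled $\gamma$-Dyck path, and $\iota$ and $\iota^{-1}$ are manifestly mutual inverses.

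For area preservation, let $p_k, q_k$ be the numbers of East steps of $P, Q$ on the line $y = k$ and set $L_j = \sum_{k < j} p_k$, $R_j = \sum_{k < j} q_k$; then the number of unit cells of the polyomino in row $j$ equals $R_j - L_j$. A single paired removal decreases $L_j$ by $1$ for $j > i$ and $R_j$ by $1$ for $j \geq i$, so the total cell count drops by exactly one (the loss being confined to row $i$). Simultaneously the polyomino width decreases by one, so the normalization in the area formula also drops by one and the two changes cancel. Summing over all paired removals yields $\area(p) = \area(\iota(p))$. The main obstacle I anticipate is the explicit verification that the paired shifts preserve the strictly-above relation in the interior; I would reduce this to a one-removal-at-a-time argument by showing each paired shift preserves the inequality between $P$'s and $Q$'s $y$-coordinates at each intermediate lattice point.
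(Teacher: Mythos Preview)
Your proposal is correct and follows essentially the same route as the paper: verify the removed steps exist, check the type $(\lambda,\eta,\gamma)$, describe the inverse by re-insertion, and observe that each paired removal kills one cell in row $i$ while shrinking the width by one, leaving the normalized area unchanged. Your bookkeeping for $\gamma$ via $\beta_i = \alpha_i - 1 - \chi(i=1)$ and for the area via the partial sums $L_j,R_j$ is in fact more explicit than the paper's.

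The one place where the paper is sharper is the point you flag as your ``main obstacle'': rather than a one-removal-at-a-time inequality check, the paper simply notes that the first East step of $P$ on the line $y=i$ lies strictly to the left of the first East step of $Q$ on the line $y=i-1$ (a direct consequence of $P$ lying strictly above $Q$). Once you know this, the region where only $P$ has been shifted left is harmless, since shifting $P$ left can only raise it, and in the region where both have been shifted the relation is inherited verbatim. Adopting this observation would let you dispense with the anticipated case analysis.
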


\begin{proof}
	First, notice that $i \not \in \Asc(w)$ means that $w_i \geq w_{i+1}$, and so there must be an East step of $P$ on the line $y=i$ as the labeling is strictly increasing along columns; then, notice that by definition we have at least an East step of $Q$ on each line; finally, since the first East step of $P$ on the line $y = i$ must necessarily be strictly on the left of the first East step of $Q$ on the line $y=i-1$, then $(\overline{P}, \overline{Q})$ is still a parallelogram polyomino.

	Now, since $Q$ has at least one East step on each line (two on the line $y=0$), and we are only removing steps on lines $y=i-1$ where $i \not \in \Asc(w)$, then $\overline{Q}$ is guaranteed to have at least one East step on each line $y=i-1$ where $i \in \Asc(w)$ (two on the line $y=0$ if $1 \in \Asc(w)$). This means that $\iota(p)$ is an ascent labeled polyomino.

	By construction, the word is the same and so the content is also the same; by Definition~\ref{def:e-composition}, the $e$-composition of $p$ is exactly given by the lenghts of the maximal vertical segments of $\overline{P}$, and by Definition~\ref{def:ascent-polyominoes} the last component of the type must be $\gamma$. It follows that $\iota(p) \in \PP_{\lambda,\eta}^\gamma$.

	Since the algorithm that defines $\iota$ is invertible, the map is bijective. Finally, notice that the number of cells in the $i^{\ith}$ row decreases by $1$ if $i \not \in \Asc(w)$ and stays the same if $i \in \Asc(w)$, so we lose $\ell(w) - \asc(w)$ cells; on the other hand, the width of the polyomino also decreases by $\ell(w) - \asc(w)$, so the area stays the same.
\end{proof}

\begin{figure}[!ht]
	\centering
	\begin{align*}
		\scalebox{.6}{
			\begin{tikzpicture}[scale=1]
				\draw[gray!60, thin] (0,0) grid (9,8) (0,0) -- (8,8);
				\filldraw[yellow, opacity=0.3] (0,0) -- (1,0) -- (2,0) -- (2,1) -- (3,1) -- (3,2) -- (4,2) -- (4,3) -- (5,3) -- (5,4) -- (6,4) -- (6,5) -- (7,5) -- (7,6) -- (8,6) -- (8,7) -- (9,7) -- (9,8) -- (8,8) -- (7,8) -- (6,8) -- (6,7) -- (5,7) -- (5,6) -- (4,6) -- (3,6) -- (3,5) -- (3,4) -- (2,4) -- (1,4) -- (1,3) -- (0,3) -- (0,2) -- (0,1) -- (0,0);
				\draw[red, sharp <-sharp >, sharp angle = -45, line width=1.6pt] (0,0) -- (0,1) -- (0,2) -- (0,3) -- (1,3) -- (1,4) -- (2,4) -- (3,4) -- (3,5) -- (3,6) -- (4,6) -- (5,6) -- (5,7) -- (6,7) -- (6,8) -- (7,8) -- (8,8) -- (9,8);
				\draw[green, sharp <-sharp >, sharp angle = 45, line width=1.6pt] (0,0) -- (1,0) -- (2,0) -- (2,1) -- (3,1) -- (3,2) -- (4,2) -- (4,3) -- (5,3) -- (5,4) -- (6,4) -- (6,5) -- (7,5) -- (7,6) -- (8,6) -- (8,7) -- (9,7) -- (9,8);
				\node at (0.5,0.5) {\LARGE$2$};
				\node at (0.5,1.5) {\LARGE$4$};
				\node at (0.5,2.5) {\LARGE$7$};
				\node at (1.5,3.5) {\LARGE$5$};
				\node at (3.5,4.5) {\LARGE$1$};
				\node at (3.5,5.5) {\LARGE$3$};
				\node at (5.5,6.5) {\LARGE$8$};
				\node at (6.5,7.5) {\LARGE$6$};
				\node at (0.5, 3) {\Huge $\times$};
				\node at (1.5, 4) {\Huge $\times$};
				\node at (5.5, 7) {\Huge $\times$};
				\node at (6.5, 8) {\Huge $\times$};
				\node at (3.5, 2) {\Huge $\times$};
				\node at (4.5, 3) {\Huge $\times$};
				\node at (7.5, 6) {\Huge $\times$};
				\node at (8.5, 7) {\Huge $\times$};
			\end{tikzpicture}
		}
		 &  &
		\scalebox{.6}{
			\begin{tikzpicture}[scale=1]
				\draw (0,0) node { };
				\draw (0,4) node {\LARGE$\longrightarrow$};
			\end{tikzpicture}
		}
		 &  &
		\scalebox{.6}{
			\begin{tikzpicture}[scale=1]
				\dyckpathmn{0,0}{5}{8}{1,1,1,1,0,1,1,0,0,1,1,0,0};
				\draw (.5,.5) node {\LARGE$2$};
				\draw (.5,1.5) node {\LARGE$4$};
				\draw (.5,2.5) node {\LARGE$7$};
				\draw (.5,3.5) node {\LARGE$5$};
				\draw (1.5,4.5) node {\LARGE$1$};
				\draw (1.5,5.5) node {\LARGE$3$};
				\draw (3.5,6.5) node {\LARGE$8$};
				\draw (3.5,7.5) node {\LARGE$6$};
				\dyckpathmnnogrid{0,0}{5}{8}{0,0,1,0,1,1,1,0,1,0,1,1,1};
			\end{tikzpicture}
		}
	\end{align*}
	\caption{A pictorial description of $\iota$ for a $\varnothing$-parking function}
	\label{fig:iota}
\end{figure}

As a corollary, we get the desired $e$-expansion.

\begin{theorem}
	\[ \left. \Delta_{m_\gamma} \Xi e_\lambda \right\rvert_{t=1} = \sum_{p \in \PF^{\gamma}_{\lambda} } q^{\area(p)} e_{\eta(p)}. \]
\end{theorem}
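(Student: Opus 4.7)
The plan is to derive this identity as an immediate corollary of the two preceding results. The theorem just above already establishes
\begin{equation*}
\left. \Delta_{m_\gamma} \Xi e_\lambda \right\rvert_{t=1} = \sum_{\eta \vdash n} e_\eta \sum_{S \in \PP^\gamma_{\lambda,\eta}} q^{\area(S)},
\end{equation*}
so it suffices to transfer the inner sum from ascent labeled polyominoes to $\gamma$-parking functions of the matching type.

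For this transfer I would invoke the area-preserving bijection $\iota \colon \PF^\gamma_{\lambda,\eta} \to \PP^\gamma_{\lambda,\eta}$ just constructed, which sends a $\gamma$-parking function of content $\lambda$ and $e$-composition $\eta$ to an ascent labeled polyomino of the same type, with the same area. This immediately yields
\begin{equation*}
\sum_{S \in \PP^\gamma_{\lambda,\eta}} q^{\area(S)} = \sum_{p \in \PF^\gamma_{\lambda,\eta}} q^{\area(p)}
\end{equation*}
for every $\eta \vdash n$.

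Substituting this into the previous identity and then using the disjoint decomposition $\PF^\gamma_\lambda = \bigsqcup_{\eta \vdash n} \PF^\gamma_{\lambda,\eta}$, together with the observation that $e_\eta = e_{\eta(p)}$ whenever $p \in \PF^\gamma_{\lambda,\eta}$ (since $e_\mu$ is invariant under permutations of the index), I would conclude
\begin{equation*}
\left. \Delta_{m_\gamma} \Xi e_\lambda \right\rvert_{t=1} = \sum_{\eta \vdash n} e_\eta \sum_{p \in \PF^\gamma_{\lambda,\eta}} q^{\area(p)} = \sum_{p \in \PF^\gamma_\lambda} q^{\area(p)} e_{\eta(p)},
\end{equation*}
which is the stated identity. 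There is no substantive obstacle here: all the combinatorial work has been done upstream, namely the sign-reversing involution $\psi$ that reduces the signed column-composition expansion of Proposition \ref{firstexpansion} to the fixed-point set $U^\gamma_{\lambda,\eta}$, the bijection $\varphi$ from those fixed points to $\PP^\gamma_{\lambda,\eta}$, and the bijection $\iota$ between $\gamma$-parking functions and ascent labeled polyominoes. The present theorem merely records the composition of these three bijections, each of which has already been shown to preserve the relevant statistic.
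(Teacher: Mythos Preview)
Your proposal is correct and matches the paper's approach exactly: the paper states this theorem as an immediate corollary of the expansion over ascent labeled polyominoes together with the area-preserving bijection $\iota$, without writing out a separate proof. Your explicit unpacking of the corollary—substituting via $\iota$ and then collapsing the double sum using $\PF^\gamma_\lambda = \bigsqcup_\eta \PF^\gamma_{\lambda,\eta}$—is precisely the intended argument.
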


\begin{example}
	Looking at Figure~\ref{fig:(2)-parking-functions}, we have the $e$-expansion
	\begin{align*}
		\left. \Delta_{m_2} \Xi e_{11} \right\rvert_{t=1} & = e_{11} + e_2 + q e_2 + e_{11} + e_{11} + qe_{11} + qe_{11} + q^2 e_{11} + q^2 e_2 + q^3 e_2 \\
		                                                  & = (q^3+q^2+q+1) e_2 + (q^2+2q+3) e_{11}
	\end{align*}
	which is indeed the correct one.
\end{example}

\section{Using different labelings}

Our construction generalizes to a broader framework, in which we use a different family of labelings and a different statistic on words. If these new labelings satisfy certain properties, then the construction we just describe still holds and allows us to give a combinatorial expansion of different families of symmetric functions.

To this end, let us analyze the construction of an element $T \in \LC^{\gamma}_{\lambda,\eta}$ and the computation of its statistic. Let $\vec{w}(T) = (w^1,\dots, w^r) \in \WV(\lambda, \beta)$ with $\beta \in R(\eta)$, and let $w(T) = w^1 \cdots w^r$ be the concatenation, that is, the word in the base rows of $T$ read from left to right. The key property of $\revmaj$ we use in Proposition~\ref{weightpreserving} and Lemma~\ref{splitlemma} is that it is computed by taking a subset of letters of $w(T)$ and summing the number of letters to the right of, and in the same base as, each letter in the subset. But these two results do not depend on the subset itself. In the end, we can give the following definition.

\begin{definition}
	Let $W$ be any set of words, and let $\rho \colon W \rightarrow \mathbb{N}$ be any statistic on $W$. We say that $\rho$ \emph{looks right} if \[ \rho(w) = \sum_{i \in S(w)} (\ell(w)-i) \] for any subset-picking function $S \colon W \rightarrow 2^{\mathbb{N}}$ such that $S(w) \subseteq \{1,\dots, \ell(w)\}$.
\end{definition}

Notice that $\rho$ and $S$ determine each other, meaning that for any $\rho$ that looks right there is a unique subset-picking function $S$ satisfying the identity, and any subset-picking function $S$ defines a statistic $\rho$ that looks right.

We can extend the definition to vectors as follows.

\begin{definition}
	Let $WV$ be any set of word vectors, and let $\vec{\rho} \colon WV \rightarrow \mathbb{N}$ be any statistic on $WV$. We say that $\vec{\rho}$ looks right if, for $\vec{w} = (w^1, \dots, w^r) \in WV$, we have \[ \vec{\rho}(\vec{w}) = \sum_{i=1}^r \rho(w^i) \] where $\rho$ is a statistic that is defined on all the entries of word vectors in $WV$ and it looks right. With an abuse of notation, we write $\rho$ for $\vec{\rho}$.
\end{definition}

Let $W \subseteq \mathbb{N}^n$, let $\LC_{n,\eta}^\gamma = \bigcup_{\lambda \vDash_w n} \LC_{\lambda,\eta}^\gamma$ (i.e.\ there is no restriction on the word), and let \[ \LC_{W,\eta}^\gamma \coloneqq \left\{ T \in \LC_{n,\eta}^\gamma \mid w(T) \in W \right\}. \] Let $\rho$ be any statistic defined on the set of subwords of words in $W$ that looks right. For $T \in \LC_{W, \eta}^\gamma$, with the same notation as in Definition~\ref{def:weight} let \[ \weight(T_i) = \lvert C^i \rvert + \rho(w^i) + u(l^i), \qquad \weight(T) = \sum_{i=1}^{\ell(\eta)} \weight(T_i). \]

For any composition $\beta $, let $\WV(\beta) = \bigcup_{\lambda \vDash_w n} \WV(\lambda, \beta)$, that is, the set of word vectors $\vec{w} = (w^1, \dots, w^{\ell(\beta)})$ such that $\ell(w^i) = \beta_i$. We define \[ W(\beta) = \{\vec{w} \in \WV(\beta) \mid  w^1\cdots w^{\ell(\beta)} \in W \}, \] which is the subset of such vectors that concatenate to a word in $W$. Note that $\rho$ extends to $W(\beta)$ so that it looks right.

Let \[ D_{W, \eta}^{\gamma}(q) = \sum_{\beta \vDash n} \sum_{\vec{w} \in W(\beta)} \sum_{\vec{\nu} \in \PR(\eta,\beta)} q^{\rho(\vec{w})} m_\gamma\left[ \sum_i [\beta_i]_q \right] (-1)^{n-\ell(\beta)} (1-q^{\beta_1}) f_{\vec{\nu}}\left[ \frac{1}{1-q} \right]. \]

We have the following analogue of Theorem~\ref{fixedpointtheorem}.
\begin{theorem}
	Let
	\begin{align*}
		U_{W,\eta}^\gamma = \Big\{ T\in \LC_{W,\eta}^\gamma \mid T & \text{ has no vertical bars, and for all $i$}                               \\
		                                                           & c_1(T_{i+1}) < c_\ell (T_i) + s(T_i;T_{i+1}) + \lvert l(T_i) \rvert \Big\},
	\end{align*}
	where $s(T_i;T_{i+1}) = \lvert S(w^i\cdot w^{i+1}_1) \rvert$. Then \[ D^{\gamma}_{W,\eta}(q) = \sum_{T \in U_{W,\eta}^\gamma} q^{\weight(T)}. \]
\end{theorem}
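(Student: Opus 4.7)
The plan is to adapt the proof of Theorem~\ref{fixedpointtheorem} essentially verbatim, with $\revmaj$ replaced by $\rho$ and $\asc(T_i;T_{i+1})$ replaced by $s(T_i;T_{i+1}) = \lvert S(w^i \cdot w^{i+1}_1) \rvert$ throughout. First, one establishes the analogue of the proposition immediately preceding Theorem~\ref{fixedpointtheorem}, namely
\[ D^{\gamma}_{W,\eta}(q) = \sum_{T \in \LC^{\gamma}_{W,\eta}} \sign(T) q^{\weight(T)}. \]
The derivation is identical to the one for $D^{\gamma}_{\lambda,\eta}$: the unpacking of $f_{\vec\nu}[1/(1-q)]$ into column-composition tableaux and of $m_\gamma[\sum_i [\beta_i]_q]$ into label vectors is unaffected, and one simply substitutes $q^{\rho(\vec w)}$ for $q^{\revmaj(\vec w)}$ at every step. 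Restricting the outer sum to $\vec w \in W(\beta)$ recovers exactly $D^{\gamma}_{W,\eta}(q)$.

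Next, one defines a sign-reversing, weight-preserving involution $\psi$ on $\LC^{\gamma}_{W,\eta}$ using the same split/join machinery. The split map $\spl(S) = (S_1,S_2)$ is defined by the same cell-shifting recipe as in Section~7, but with the number of cells added to each column of $S_2$ being $\lvert S(w_1,\dots,w_{\alpha_1+1}) \rvert + \lvert l^1 \rvert$ in place of $\asc(w_1,\dots,w_{\alpha_1+1}) + \lvert l^1 \rvert$. The crucial technical step is the analogue of Proposition~\ref{weightpreserving}: writing $w = w^1 w^2$ with $\lvert w^1 \rvert = v$, $\lvert w \rvert = n$, and $s = \lvert S(w^1 \cdot w^2_1) \rvert$, one needs
\[ \rho(w) = \rho(w^1) + \rho(w^2) + (n-v)\cdot s. \]
This follows from the looks-right formula by splitting $\sum_{i \in S(w)}(n-i)$ at position $v$, rewriting $n-i = (n-v)+(v-i)$ for $i \leq v$, and matching the three resulting pieces against $\rho(w^1)$, $\rho(w^2)$, and the $(n-v)\cdot s$ correction, in complete parallel with the $\revmaj$ calculation. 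The $u(l)$ analysis is unchanged.

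With weight preservation in hand, the definition of $\join$ and the analogue of Lemma~\ref{splitlemma} transfer with no modification, since both depend only on the invariant $c_1(S_2) = c_1(S)$ and not on the specific statistic. One then defines $\psi$ by the same four-case recursion and verifies $\psi^2 = \mathrm{id}$ word-for-word, using only that $\spl$ and $\join$ each change the number of vertical bars by one and the split-join lemma. The characterization of fixed points follows immediately: a sequence is fixed iff no $T_i$ can split (no vertical bars) and no adjacent pair $T_i, T_{i+1}$ can join, which is precisely the inequality defining $U^{\gamma}_{W,\eta}$.

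The main obstacle is the identity $\rho(w^1 w^2) = \rho(w^1) + \rho(w^2) + (n-v)\cdot \lvert S(w^1 \cdot w^2_1)\rvert$, which requires that the subset-picking function $S$ behaves compatibly with restriction to subwords: specifically, that $S(w^1 w^2) \cap \{1,\dots,v-1\}$ agrees with $S(w^1)$, that $S(w^1 w^2) \cap \{v+1,\dots,n-1\}$ is a shift of $S(w^2)$, and that $S(w^1 w^2) \cap \{1,\dots,v\} = S(w^1 \cdot w^2_1) \cap \{1,\dots,v\}$ so the boundary position $v$ is treated consistently. For $\revmaj$ this holds automatically from the local (consecutive-pair) nature of $\Asc$; for a general looks-right statistic it must be read as an implicit consistency requirement on how the subset-picking function extends across subwords, and should be verified case-by-case when applying the theorem to any particular $\rho$.
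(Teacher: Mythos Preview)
Your approach matches the paper's proof, which simply asserts that ``the same argument as in Proposition~\ref{weightpreserving} and Lemma~\ref{splitlemma} holds if we replace $\asc$ with any statistic that looks right.'' You have unpacked this assertion in considerably more detail than the paper does.

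Your final paragraph is worth highlighting: you correctly identify that the weight-preservation identity
\[
\rho(w^1 w^2) = \rho(w^1) + \rho(w^2) + (n-v)\cdot\lvert S(w^1\cdot w^2_1)\rvert
\]
is not a formal consequence of the bare ``looks right'' definition, but requires that the subset-picking function $S$ be compatible with restriction to prefixes and suffixes (so that $S(w)\cap\{1,\dots,v\}$ agrees with $S(w^1\cdot w^2_1)$ and $S(w)\cap\{v+1,\dots,n-1\}$ is the shift of $S(w^2)$). The paper does not make this hypothesis explicit; it is satisfied automatically in every application the paper considers (where $S=\Asc$ or $S=\Des$, both determined by consecutive pairs), and the authors' informal discussion just before the definition (``taking a subset of letters of $w(T)$\dots'') suggests they have such local statistics in mind. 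Your caveat that this compatibility should be checked case-by-case is a fair reading of what the theorem actually requires.
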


\begin{proof}
	The same argument as in Proposition~\ref{weightpreserving} and Lemma~\ref{splitlemma} holds if we replace $\asc$ with any statistic that looks right. In this case, we replaced $\asc$ with $\rho$, which looks right by hypothesis, so the statement holds.
\end{proof}

Definitions~\ref{LPPDefinition} and \ref{def:ascent-polyominoes} generalize as follows.

\begin{definition}
	Let $W$ be a set of words of length $n$, let $\rho$ be a statistic that looks right on the subwords of words in $W$, and let $S$ be the corresponding subset-picking function. An $S$-labeled parallelogram polyomino is a triple $(P,Q,w)$ where $w \in W$, and $(P,Q)$ is a parallelogram polyomino such that if $Q$ has no East steps on the line $y = i-1$ (or has only $1$ East step if $i=1$, since the first step of $Q$ must be East), then $i \not \in S(w)$.
\end{definition}

\begin{definition}
	Let $(P, Q, w)$ be an $S$-labeled parallelogram polyomino. Let $\eta \vdash n$ be the partition whose block sizes are the lengths of the maximal streaks of North steps in $P$, in some order. Let \[ \beta_i \coloneqq \# \{ \text{East steps of $Q$ on the line $y = i-1$} \} - \chi(i=1) - \chi(i \in S(w)), \] and let $\gamma$ be the partition obtained by rearranging $(\beta_1, \dots, \beta_n)$ and removing zeros.

	We define $\type(P, Q, w) = (W, \eta, \gamma)$, and call $\PP_{W, \eta}^\gamma$ the set of S-labeled polyominoes of type $(W, \eta, \gamma)$. Once again, the width depends on the cardinality of $S(w)$ and it's not constant through the set.
\end{definition}

Finally, using the same argument as in Theorem~\ref{bijectiontheorem}, we have the following.
\begin{theorem}
	\[ D_{W,\eta}^{\gamma}(q) = \sum_{P \in \PP_{W,\eta}^{\gamma} } q^{\area(P)}. \]
\end{theorem}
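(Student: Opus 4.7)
The plan is to mimic the proof of Theorem~\ref{bijectiontheorem} almost verbatim, combined with the immediately preceding theorem which already reduces $D_{W,\eta}^\gamma(q)$ to a sum over fixed points in $U_{W,\eta}^\gamma$. So it suffices to construct a weight-preserving bijection $\varphi_S \colon U_{W,\eta}^\gamma \to \PP_{W,\eta}^\gamma$ that generalizes $\varphi$ from the special case $W = \mathbb{N}_+^n$, $S = \Asc$, $\rho = \revmaj$. Because every step in the original proof only invoked the ``looks right'' property of $\asc$ (namely that it picks a subset of letter positions and weighs each by the number of letters to the right in the same base row), replacing $\asc$ throughout by the subset-picking function $S$ associated to $\rho$ should go through essentially unchanged.

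Concretely, given $T = (T_1,\dots,T_r) \in U_{W,\eta}^\gamma$ with $T_i = (C^i,w^i,l^i)$, I would define $w(T) = w^1 \cdots w^r \in W$, then set
\[
	r_i(T) = l(T)_i + \chi(i \in S(w(T))), \qquad s_i(T) = c_{\ell(T_i)}(T_i) + s(T_i;T_{i+1}) + \lvert l^i \rvert - c_1(T_{i+1}),
\]
and build $Q(T)$ and $P(T)$ by exactly the same formulas as before, with $\asc$ replaced by $s = \lvert S(\cdot) \rvert$. The fixed-point inequality in the definition of $U_{W,\eta}^\gamma$ is precisely what guarantees $s_i(T) \geq 0$, so $P(T)$ is well-defined and stays strictly above $Q(T)$ away from the endpoints; and the condition defining $S$-labeled parallelogram polyominoes (that $Q$ lacks East steps on row $y=i-1$ only when $i \notin S(w)$) is forced by the construction of $Q(T)$. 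This shows $\varphi_S(T) \in \PP_{W,\eta}^\gamma$.

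For the inverse I would copy the pictorial procedure in Figure~\ref{fig:CCT-to-ascent-polyomino}: given $(P,Q,w) \in \PP_{W,\eta}^\gamma$, draw, for each maximal vertical segment of $P$, the maximal rectangle contained in the polyomino whose boundary avoids East steps of $Q$; then slide labels to the opposite side and rotate by $90^\circ$ to read off the column-composition tableaux $C^i$, the words $w^i$, and the label sequences $l^i$, where $l^i_j$ is recovered from the row-by-row count of cells outside the rectangle minus $\chi(j \in S(w))$. The translation between ``strictly to the right of the previous vertical segment and strictly left of $Q$'' and ``$c_1(T_{i+1}) < c_{\ell(T_i)}(T_i) + s(T_i;T_{i+1}) + \lvert l(T_i) \rvert$'' is exactly the looks-right identity applied to the subset $S(w^i w^{i+1}_1)$, so bijectivity follows.

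Weight preservation is the step I expect to require the most care, but even here the argument in Theorem~\ref{bijectiontheorem} transports directly. Running the same dashed path along $S'$ from top-left going East when possible and South otherwise, the cells above the dashed line give $\sum_i \lvert C^i \rvert$, while the cells below decompose by column: directly South of the letter $w_j$ there are $l_j + \chi(j \in S(w))$ cells, each contributing a factor equal to the number of cells in the same base row to its right. Summing gives
\[
	\sum_j l_j \cdot \#\{\text{cells right of } w_j\} \;+\; \sum_{j \in S(w)} \#\{\text{cells right of } w_j\} \;=\; \sum_i \bigl(u(l^i) + \rho(w^i)\bigr),
\]
using precisely the hypothesis that $\rho$ looks right. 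Thus $\weight(T) = \area(\varphi_S(T))$, and combining with the preceding theorem yields $D_{W,\eta}^\gamma(q) = \sum_{P \in \PP_{W,\eta}^\gamma} q^{\area(P)}$. The main obstacle, as always in this kind of generalization, is to make sure that when we specialize back to $S = \Asc$ the identification of terms is bookkeeping-consistent, so I would verify this sanity check on a small example before writing up the formal proof.
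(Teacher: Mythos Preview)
Your proposal is correct and follows exactly the route the paper takes: the paper proves this theorem by saying ``using the same argument as in Theorem~\ref{bijectiontheorem}'', and you have carried out precisely that transport, replacing $\Asc$ by the general subset-picking function $S$ at every step. One tiny slip: the fixed-point inequality gives $s_i(T) > 0$ strictly (not just $\geq 0$), which is what you actually need for $P(T)$ to stay strictly above $Q(T)$; otherwise the write-up is sound.
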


In particular, for $S = \Des$, our labeled polyominoes $(P,Q,w)$ enumerated by $D_{\lambda,\eta}^{\gamma}$ can be interpreted in two different ways: in one, the path $Q$ has forced East steps at the descents of $w$, in the other it has forced East steps at the ascents of $w$. Both are valid combinatorial interpretations.

Finally, the map $\iota$ from Definition~\ref{def:iota} also generalizes, by replacing the condition $i \not \in \Asc(w)$ with $i \not \in S(w)$. Of course, the set of $\gamma$-Dyck paths we get in the end will not necessarily have strictly increasing columns, but rather the condition that if $w_i, w_{i+1}$ are in the same column, then $i \in S(w)$.

\section{Applications to Schur functions}

We can now use the idea of choosing different labelings to get similar results for other instances of $\widetilde{\Delta}_{m_\gamma} \Xi F$. One interesting case that our technique can handle is when $F$ is a Schur function (rather than an elementary symmetric function). We now find an $e$-expansion for $\widetilde{\Delta}_{m_\gamma} \Xi s_{\lambda}$.

The idea is to mimic what we did in Section~\ref{sec:preliminary-manipulations}. We have the expansion \[ s_{\lambda}^\ast = \sum_{\mu \vdash n} \frac{\Ht_\mu}{w_\mu} \langle s_{\lambda}^\ast , \Ht_\mu \rangle_\ast = \sum_{\mu \vdash n} \frac{\Ht_\mu}{w_\mu} \langle s_{\lambda'} , \Ht_\mu \rangle, \] where again in the last step we went from the $\ast$-scalar product to the ordinary Hall scalar product.

We can handle the term $\langle s_{\lambda'} , \Ht_\mu \rangle$ by using the specialization at $t=1$ of the Macdonald polynomials \cite{Macdonald-Book-1995, Haglund-Book-2008, Haglund-Haiman-Loehr}. Combining \cite[Equation~(2.26), (2.30)]{Haglund-Book-2008} and specializing $t=1$, we get the formula \[ \left. \langle s_{\lambda} , \Ht_\mu \rangle \right\rvert_{t=1} = \tilde{K}_{\lambda, \mu}(q,1) = \sum_{T \in \SYT(\lambda)} q^{\comaj(T, \mu)}, \] where $\SYT(\lambda)$ is the set of standard Young tableaux of shape $\lambda$, and $\comaj(T, \mu)$ is computed as follows: first, partition the entries of $T$ in blocks according to $\mu$, i.e.\ one block containing the entries going from $1$ to $\mu_1$, one block containing the entries going from $\mu_1 + 1$ to $\mu_1 + \mu_2$, and so on; then, for $1 \leq j \leq \lvert \lambda \rvert$, check if $j$ and $j+1$ are in the same block, say the $i^{\ith}$ block, and check if $j+1$ occurs in a row above $j$ in $T$; if they do, add $\mu_1 + \dots + \mu_i - j$ to $\comaj(T, \mu)$.

We want to convert this statement into one about words. Recall that a lattice word $w = (w_1,\dots, w_n)$ is a word such that for every $i$ and $j$, \[ m_{j+1}(w_1,\dots,w_i) \leq m_j(w_{1},\dots, w_{i}). \] This means that when read from the left, the number of $j$'s encountered is, at every moment, greater than or equal to the number of $j+1$'s. If $m_j (w) = \lambda_j$, then we say $w$ has content $\lambda$, and write $w \in \LW(\lambda)$. Notice that, since $w$ is a lattice word, then $\lambda$ must be a partition. We have the following.

\begin{lemma}
	Lattice words encode standard Young tableaux.
\end{lemma}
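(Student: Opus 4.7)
The plan is to exhibit an explicit bijection between lattice words of content $\lambda$ and standard Young tableaux of shape $\lambda$. Given an SYT $T$ of shape $\lambda \vdash n$, define a word $w(T) = (w_1, \dots, w_n)$ by letting $w_i$ be the row of $T$ containing the entry $i$. Conversely, given a lattice word $w \in \LW(\lambda)$, build a tableau $T(w)$ by reading $w$ from left to right and, at step $i$, placing the entry $i$ at the end of row $w_i$.

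First I would check that $w \mapsto T(w)$ is well-defined: after reading the first $i$ letters, the lattice condition $m_{j+1}(w_1, \dots, w_i) \leq m_j(w_1, \dots, w_i)$ guarantees that row $j+1$ never has more entries than row $j$, so the filled cells form a Young diagram at every step. Placing the next entry at the end of row $w_{i+1}$ then places it in a cell whose position is a corner of the current diagram, and since entries are inserted in increasing order $1, 2, \dots, n$, the resulting filling is both row- and column-increasing, hence a valid SYT. The final shape has exactly $m_j(w) = \lambda_j$ cells in row $j$, so $T(w)$ has shape $\lambda$.

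Next I would check that $T \mapsto w(T)$ lands in $\LW(\lambda)$: for any prefix $(w_1, \dots, w_i)$, the number of $j$'s is the number of entries of $T$ that are $\leq i$ and lie in row $j$. Since $T$ is column-strict, every cell $(j+1, c)$ of $T$ with entry $\leq i$ sits below a cell $(j, c)$ whose entry is strictly smaller and thus also $\leq i$; this yields an injection from the row-$(j+1)$ entries $\leq i$ into the row-$j$ entries $\leq i$, which is precisely the lattice condition. The content of $w(T)$ is $\lambda$ by construction.

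Finally I would verify that the two maps are mutually inverse, which is immediate from the construction: $T(w(T))$ reconstructs $T$ cell by cell, and $w(T(w))$ reproduces $w$ letter by letter. The only subtlety is keeping the bookkeeping straight at the step where one argues the filled cells form a Young diagram; this is essentially the content of the classical fact that lattice/ballot sequences are in bijection with SYT, so I do not expect a real obstacle.
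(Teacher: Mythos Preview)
Your proposal is correct and uses the same bijection as the paper: putting $i$ in row $w_i$ (equivalently, setting $w_i$ to be the row of $i$ in $T$). The paper's proof is terser, merely asserting that the lattice condition makes $T$ a standard Young tableau and that the construction is reversible, whereas you spell out the verifications; but the approach is identical.
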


\begin{proof}
	Given a lattice word $w \in \LW(\lambda)$, let $T$ be the standard Young tableau obtained by putting $i$ in the $w_i^{\ith}$ row. Since $w$ has content $\lambda$, then $T$ has shape $\lambda$, and since $w$ is a lattice word, then $T$ is indeed a standard Young tableau. It is clear that this construction is reversible and hence bijective. See Figure~\ref{fig:syt} for an example.
\end{proof}

\begin{figure}[!ht]
	\centering
	\begin{tikzpicture}[scale=1]
		\partitionfr{4,3,2}
		\draw (0.5, 0.5) node {$1$};
		\draw (1.5, 0.5) node {$2$};
		\draw (2.5, 0.5) node {$4$};
		\draw (3.5, 0.5) node {$7$};
		\draw (0.5, 1.5) node {$3$};
		\draw (1.5, 1.5) node {$6$};
		\draw (2.5, 1.5) node {$9$};
		\draw (0.5, 2.5) node {$5$};
		\draw (1.5, 2.5) node {$8$};
	\end{tikzpicture}
	\caption{The standard Young tableau of shape $(4,3,2)$ encoded by the lattice word $112132132$}
	\label{fig:syt}
\end{figure}

For any composition $\beta$, recall that $\WV(\beta)$ is the set of word vectors $\vec{w} = (w^1, \dots, w^{\ell(\beta)})$ such that $\ell(w^i) = \beta_i$. We define \[ \LW(\lambda, \beta) = \{\vec{w} \in \WV(\beta) \mid  w^1\cdots w^{\ell(\beta)} \in \LW(\lambda) \}, \] which is the subset of vectors that concatenate to a lattice word with content $\lambda$. Recall that \[ \revmaj(\vec{w}) = \sum_{i=1}^{\ell(\beta)} \revmaj(w^i). \] We have the following.

\begin{lemma}
	\label{lem:tableau-to-lattice-stat}
	If $w \in \LW(\lambda)$ encodes $T \in \SYT(\lambda)$, for $\vec{w} \in \LW(\lambda, \beta)$ that concatenates to $w$, we have \[ \revmaj(\vec{w}) = \comaj(T, \beta). \]
\end{lemma}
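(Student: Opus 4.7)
My plan is to check that the bijection $w \leftrightarrow T$ already lines up the descent/ascent data in the obvious way, and that the weighting then matches term by term.

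Write $w = w^1 w^2 \cdots w^{\ell(\beta)}$, so that the block boundaries in $\vec{w}$ cut $\{1, 2, \dots, n\}$ into consecutive intervals of sizes $\beta_1, \beta_2, \dots$. Under the encoding $T \leftrightarrow w$, the entry $j$ sits in row $w_j$ of $T$, so $j$ and $j+1$ belong to the same $i$-th block in the $\beta$-partition of $T$ precisely when $w_j$ and $w_{j+1}$ are in the same component $w^i$. Moreover, writing $j = \beta_1 + \cdots + \beta_{i-1} + k$ so that $w_j$ is the $k$-th letter of $w^i$, the condition ``$j+1$ occurs in a row above $j$'' reads $w_j < w_{j+1}$, which (when $k < \beta_i$) is exactly the statement $k \in \Asc(w^i)$. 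Thus
\[
\{ j : j, j+1 \text{ in the same } \beta\text{-block of } T, \ j{+}1 \text{ above } j\} \ \longleftrightarrow \ \bigsqcup_{i} \Asc(w^i).
\]

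Once this correspondence is established, the two statistics are weighted identically. On the right, position $k \in \Asc(w^i)$ contributes $\ell(w^i) - k = \beta_i - k$ to $\revmaj(\vec{w})$. On the left, the corresponding $j = \beta_1 + \cdots + \beta_{i-1} + k$ contributes $\beta_1 + \cdots + \beta_i - j = \beta_i - k$ to $\comaj(T,\beta)$. Summing over $i$ and over $k \in \Asc(w^i)$ gives $\revmaj(\vec{w}) = \comaj(T, \beta)$.

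The verification is essentially a re-indexing, so there is no real obstacle; the only subtle point is being careful that the ``same block'' clause in the definition of $\comaj(T,\beta)$ corresponds precisely to the fact that $\Asc(w^i)$ is taken within $w^i$ and does not include the boundary index $k = \beta_i$ (where one would compare the last letter of $w^i$ to the first letter of $w^{i+1}$). This is exactly why the definition of $\revmaj$ on the vector $\vec{w}$ sums $\revmaj(w^i)$ block by block, and ensures the two boundary conventions match.
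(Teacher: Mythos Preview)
Your proof is correct and follows essentially the same approach as the paper: both set up the bijection between positions $j$ contributing to $\comaj(T,\beta)$ and the ascent positions $k \in \Asc(w^i)$ via the re-indexing $j = \beta_1 + \cdots + \beta_{i-1} + k$, and then verify that each such position contributes $\beta_i - k$ on both sides. Your explicit remark about the boundary index $k = \beta_i$ being excluded is a nice clarification that the paper leaves implicit.
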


\begin{proof}
	Let $1 \leq i \leq \ell(\beta)$, let $\Sigma_i(\beta) \coloneqq \beta_1 + \dots + \beta_i$, and let \[ \Asc_{\beta,i}(w) \coloneqq \{ \Sigma_{i-1}(\beta) + r \mid r \in \Asc(w^i) \}, \] that is, the set $\Asc(w^i)$ relabeled so that its entries give the positions of the ascents in $w$ rather than in $w^i$. By definition, \[ \revmaj(w^i) = \sum_{r \in \Asc(w^i)} (\beta_i - r) = \sum_{j \in \Asc_{\beta,i}(w)} (\Sigma_i(\beta) - j). \] If $j \in \Asc_{\beta,i}(w)$, then necessarily $j+1 \leq \Sigma_i(\beta)$ (because $w_j$ must occur within $w^i$), so $j$ and $j+1$ belong to the same block of $T$ according to $\beta$; moreover, we have $w_j < w_{j+1}$, so $j+1$ occurs in a row above $j$ in $T$. This means that the contribution of $j$ to $\comaj(T, \beta)$ is exactly $\Sigma_{i}(\beta) - j$, so taking the sum over all $j$ we get \[ \revmaj(\vec{w}) = \comaj(T, \beta) \] as desired.
\end{proof}

\begin{figure}[!ht]
	\centering
	\includegraphics{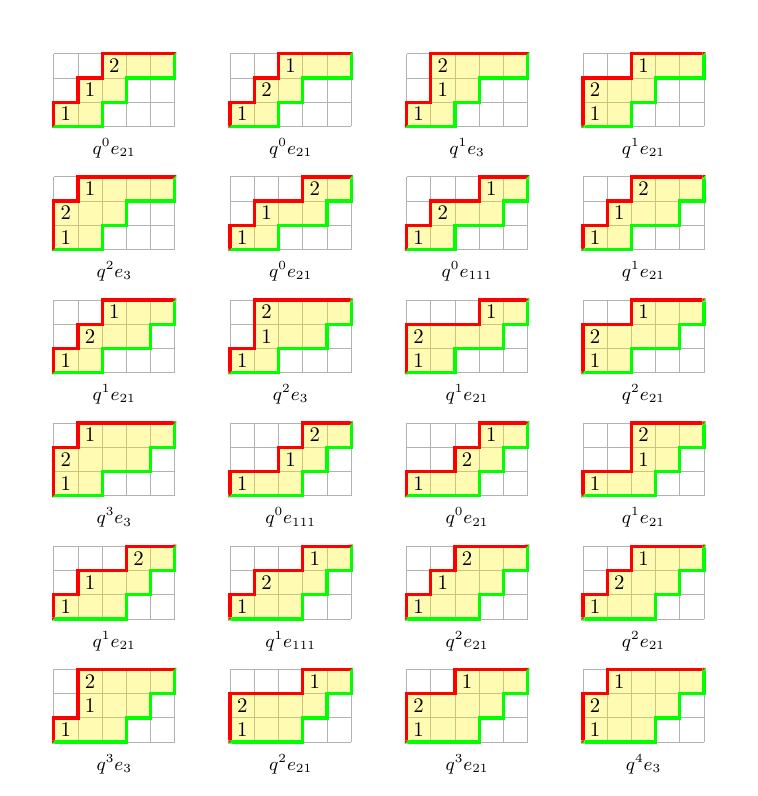}
	\caption{The $e$-expansion for $\widetilde{\Delta}_{m_1} \Xi s_{21}$}
	\label{FigureFullExample}
\end{figure}

It is important to remark that \[ \Ht_\mu[X;q,1] = (q;q)_\mu h_\mu \left[ \frac{1}{1-q} \right], \] which is a multiplicative basis. This means that, when evaluating $t=1$, the order of the parts of $\mu$ doesn't matter and we can use compositions rather than partitions, the combinatorial argument being exactly the same. Putting everything together, we have the following expansion.

\begin{proposition}
	\[ \left\langle s_{\lambda} , (q;q)_\beta h_\beta \left[ \frac{1}{1-q} \right] \right\rangle = \sum_{\vec{w} \in \LW(\lambda, \beta)} \revmaj(\vec{w}). \]
\end{proposition}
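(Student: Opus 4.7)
The proposed identity is essentially an assembly of facts already collected in the preceding paragraphs, and the plan is to chain together three ingredients: the $t=1$ specialization of the modified Macdonald basis, the specialized $q,t$-Kostka formula, and Lemma~\ref{lem:tableau-to-lattice-stat}. I interpret the right-hand side as $\sum q^{\revmaj(\vec{w})}$ (and the plethystic argument as $X/(1-q)$), since otherwise the dimensions do not match.

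First, for any partition $\mu \vdash n$, the specialization $\Ht_\mu[X;q,1] = (q;q)_\mu h_\mu[X/(1-q)]$ recalled in Section~\ref{Sectionsymmetricfunctions}, combined with the $q,t$-Kostka specialization also recalled in this section, gives
\[ \left\langle s_\lambda, (q;q)_\mu h_\mu\left[\tfrac{X}{1-q}\right] \right\rangle = \langle s_\lambda, \Ht_\mu \rangle\big\rvert_{t=1} = \tilde{K}_{\lambda,\mu}(q,1) = \sum_{T \in \SYT(\lambda)} q^{\comaj(T,\mu)}. \]
As noted in the excerpt just before the proposition, the left-hand side depends only on the multiset of parts of $\mu$, since $h_\mu$ is a multiplicative basis and $(q;q)_\mu$ is defined as the corresponding product. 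The definition of $\comaj(T,\mu)$ partitions the entries of $T$ into consecutive blocks of sizes $\mu_1,\mu_2,\dots$ in order, so replacing $\mu$ by an arbitrary rearrangement $\beta \vDash n$ yields a statistic $\comaj(T,\beta)$ for which the same identity holds verbatim.

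Second, I would apply the bijection $\SYT(\lambda) \leftrightarrow \LW(\lambda)$ proved in the lemma above, sending $T$ to its encoding lattice word $w$, and then split $w$ uniquely into consecutive blocks of sizes $\beta_1, \ldots, \beta_{\ell(\beta)}$ to obtain a unique $\vec{w} \in \LW(\lambda, \beta)$ whose concatenation is $w$. This clearly gives a bijection $\SYT(\lambda) \leftrightarrow \LW(\lambda,\beta)$ for fixed $\beta$. Lemma~\ref{lem:tableau-to-lattice-stat} then states that under this bijection $\comaj(T,\beta) = \revmaj(\vec{w})$, so after reindexing over $\vec{w}$ instead of $T$ we obtain
\[ \left\langle s_\lambda, (q;q)_\beta h_\beta\left[\tfrac{X}{1-q}\right] \right\rangle = \sum_{\vec{w} \in \LW(\lambda,\beta)} q^{\revmaj(\vec{w})}, \]
which is the claimed identity.

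There is no real obstacle: once the specialization of $\Ht_\mu$, the bijection between $\SYT(\lambda)$ and $\LW(\lambda)$, and Lemma~\ref{lem:tableau-to-lattice-stat} are in place, the proof is a short chain of equalities. The only step requiring a moment's care is the passage from partitions to compositions, which relies on the multiplicativity of $h$ observed immediately before the proposition and the observation that $\comaj(T,\beta)$ is defined block-by-block in the order prescribed by $\beta$.
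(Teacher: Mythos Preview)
Your proposal is correct and follows exactly the argument the paper intends: the proposition is stated without an explicit proof block, as it is meant to follow immediately from the chain of results just established (the $t=1$ specialization of $\Ht_\mu$, the $\tilde K_{\lambda,\mu}(q,1)$ formula, the bijection $\SYT(\lambda)\leftrightarrow\LW(\lambda)$, Lemma~\ref{lem:tableau-to-lattice-stat}, and the remark on passing from partitions to compositions via multiplicativity of $h$). You have also correctly identified the two evident typos in the statement (the missing $q^{\cdot}$ on the right-hand side and $1/(1-q)$ in place of $X/(1-q)$).
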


Using this expansion, the same argument we used in Section~\ref{sec:preliminary-manipulations} to prove Proposition~\ref{firstexpansion} leads us to the following.

\begin{proposition}
	For any $\lambda \vdash n$ and $\gamma \vdash m$, we have
	\[ \widetilde{\Delta}_{m_\gamma} \Xi s_{\lambda} = \sum_{\eta \vdash n } e_\eta ~C^{\gamma}_{s_\lambda,\eta}(q), \]
	where
	\begin{align*}
		C^{\gamma}_{s_\lambda, \eta} = \sum_{\beta \vDash n} \sum_{\vec{w} \in \LW(\lambda', \beta)} \sum_{\vec{\nu} \in \PR(\eta,\beta)} & q^{\revmaj(\vec{w})} m_\gamma\left[ \sum_i [\beta_i]_q \right]                         \\
		                                                                                                                                  & \times (-1)^{n-\ell(\beta)} (1-q^{\beta_1}) f_{\vec{\nu}}\left[ \frac{1}{1-q} \right].
	\end{align*}
\end{proposition}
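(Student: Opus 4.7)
The plan is to reproduce, essentially verbatim, the derivation of Proposition~\ref{firstexpansion}, substituting the Schur-function inner-product formula for the one used there with $h_\lambda$. So the work will largely consist of checking that each ingredient in that earlier derivation has a Schur-function analogue already available.

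First I would expand $s_{\lambda}^{\ast}$ in the modified Macdonald basis via the $\ast$-scalar product, obtaining
\[ s_{\lambda}^{\ast} = \sum_{\mu \vdash n} \frac{\Ht_\mu}{w_\mu} \langle s_{\lambda}^{\ast}, \Ht_\mu \rangle_\ast = \sum_{\mu \vdash n} \frac{\Ht_\mu}{w_\mu} \langle s_{\lambda'}, \Ht_\mu \rangle, \]
the last equality using $\omega s_\lambda = s_{\lambda'}$ and the definition of $\langle\cdot,\cdot\rangle_\ast$. Applying $\Delta_{m_\gamma} M \Delta_{e_1} \Pi$ termwise then gives
\[ \Delta_{m_\gamma} \Xi s_{\lambda} = \sum_{\mu \vdash n} \frac{M B_\mu \Pi_\mu}{w_\mu} \, m_\gamma[B_\mu] \, \langle s_{\lambda'}, \Ht_\mu \rangle \, \Ht_\mu. \]

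Next I would specialize $t=1$ in each of the three factors. The computation of $\left.(M B_\mu \Pi_\mu)/w_\mu\right|_{t=1} = (-1)^{n-\ell(\mu)}(q;q)_\mu^{-1} \sum_{\alpha \in R(\mu)}(1-q^{\alpha_1})$ from Section~\ref{sec:preliminary-manipulations} carries over without change, as does the Cauchy/forgotten-function expansion
\[ \left.\Ht_\mu[X;q,1]\right. = (q;q)_\mu \sum_{\eta \vdash n} e_\eta \sum_{\vec{\nu} \in \PR(\eta,\mu)} f_{\vec{\nu}}\!\left[\frac{1}{1-q}\right]. \]
The only new ingredient is the Schur inner product at $t=1$: combining the Macdonald specialization $\langle s_{\lambda'},\Ht_\mu\rangle\big|_{t=1} = \sum_{T\in \SYT(\lambda')} q^{\comaj(T,\mu)}$ with the bijection between $\SYT(\lambda')$ and $\LW(\lambda')$ and with Lemma~\ref{lem:tableau-to-lattice-stat}, I would obtain, for every composition $\beta \vDash n$,
\[ \left.\langle s_{\lambda'}, \Ht_\beta \rangle\right|_{t=1} = \sum_{\vec{w} \in \LW(\lambda', \beta)} q^{\revmaj(\vec{w})}, \]
using the remark after Lemma~\ref{lem:tableau-to-lattice-stat} that $\Ht_\mu[X;q,1]$ is a multiplicative basis so that the indexing tuple may be a composition rather than a partition.

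Finally, exactly as in Section~\ref{sec:preliminary-manipulations}, I would collapse the double sum over $\mu \vdash n$ and $\alpha \in R(\mu)$ into a single sum over compositions $\beta \vDash n$ (with $\beta_1 = \alpha_1$ producing the factor $(1-q^{\beta_1})$), insert the forgotten-function expansion of $\Ht_\beta$ to produce the sum over $\vec{\nu} \in \PR(\eta,\beta)$, and extract the coefficient of $e_\eta$. The result is precisely the stated formula for $C^{\gamma}_{s_\lambda,\eta}(q)$.

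The only step that is not a formal transcription of the earlier argument is the lattice-word identity indexed by a composition $\beta$; this is the main thing to verify, but it reduces immediately to the already-proved partition case via multiplicativity of $\Ht_\mu[X;q,1]$ in $\mu$. I therefore do not anticipate any genuine obstacle beyond bookkeeping.
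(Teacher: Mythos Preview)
Your proposal is correct and follows essentially the same approach as the paper: the paper simply states that, once the lattice-word identity $\langle s_{\lambda'}, (q;q)_\beta h_\beta[X/(1-q)] \rangle = \sum_{\vec w \in \LW(\lambda',\beta)} q^{\revmaj(\vec w)}$ is established (via the tableau bijection, Lemma~\ref{lem:tableau-to-lattice-stat}, and the multiplicativity of $\Ht_\mu[X;q,1]$), the argument of Section~\ref{sec:preliminary-manipulations} goes through verbatim with $\LW(\lambda',\beta)$ replacing $\WV(\lambda,\beta)$. That is exactly what you have outlined.
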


Since our statistic is still $\revmaj$, we still have $S(w) = \Asc(w)$, so we can repeat the whole construction starting from lattice words instead, and get lattice $\gamma$-Dyck paths. In the end, we get the following.

\begin{theorem}
	\[ \widetilde{\Delta}_{m_\gamma} \Xi s_{\lambda} = \sum_{p \in \LPF^{\gamma}_{\lambda'} } q^{\area(p)} e_{\eta(p)}. \]
\end{theorem}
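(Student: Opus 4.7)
The plan is to mirror the argument that established the analogous statement for $\Xi e_\lambda$, replacing every step by its ``lattice word'' counterpart and then invoking the general framework of the preceding section. First, I would use the expansion $s_\lambda^\ast = \sum_\mu \widetilde H_\mu \langle s_{\lambda'}, \widetilde H_\mu\rangle / w_\mu$, where the $\omega$ appearing in the $\ast$-scalar product swaps $\lambda$ with $\lambda'$. The specializations of $MB_\mu\Pi_\mu/w_\mu$ at $t=1$ and of $\widetilde H_\mu[X;q,1]$ are identical to those in Section~\ref{sec:preliminary-manipulations}, so the only new ingredient is a combinatorial interpretation of $\langle s_{\lambda'}, \widetilde H_\mu\rangle\rvert_{t=1}$.

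For that, I would use Haglund's formula $\tilde K_{\lambda',\mu}(q,1) = \sum_{T \in \SYT(\lambda')} q^{\comaj(T,\mu)}$, then apply the lemma that lattice words encode standard Young tableaux together with Lemma~\ref{lem:tableau-to-lattice-stat} to rewrite $\comaj(T,\mu)$ as $\revmaj(\vec w)$ over $\vec w \in \LW(\lambda',\beta)$. Since $\widetilde H_\mu[X;q,1]$ is a multiplicative basis, replacing partitions $\mu$ by arbitrary compositions $\beta$ is legitimate. Putting this inside the same chain of manipulations that gave Proposition~\ref{firstexpansion} yields exactly the expansion
\[
	\widetilde\Delta_{m_\gamma} \Xi s_\lambda = \sum_{\eta \vdash n} e_\eta\, C^\gamma_{s_\lambda,\eta}(q),
\]
with $C^\gamma_{s_\lambda,\eta}$ as displayed in the proposition immediately preceding the theorem.

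Now I would invoke the general machinery. The statistic $\revmaj$ ``looks right'' via the subset-picking function $S = \Asc$, and $C^\gamma_{s_\lambda,\eta}$ is precisely $D^\gamma_{W,\eta}$ for the choice $W = \LW(\lambda')$. Hence the generalized fixed-point theorem identifies $C^\gamma_{s_\lambda,\eta}$ with $\sum_{T \in U_{W,\eta}^\gamma} q^{\weight(T)}$, and the generalized Theorem~\ref{bijectiontheorem} rewrites this as $\sum_{P \in \PP^\gamma_{W,\eta}} q^{\area(P)}$. Finally, the generalized $\iota$-map (obtained by replacing the condition $i \notin \Asc(w)$ with $i \notin S(w) = \Asc(w)$, which here coincides) converts ascent-labeled polyominoes into labeled $\gamma$-Dyck paths. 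Because $\iota$ preserves the label word $w$, the resulting labeled Dyck paths carry lattice words precisely when the input is in $\LW(\lambda')$, i.e.\ they are lattice $\gamma$-parking functions of content $\lambda'$, and the $e$-composition coincides with $\eta$ by Definition~\ref{def:e-composition}.

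The main point requiring care is verifying that restricting to $W = \LW(\lambda')$ is compatible with the involution $\psi$: one must check that $\spl$ and $\join$ leave the concatenated base word $w^1 \cdots w^r$ unchanged, so that being a lattice word is preserved under $\psi$. This is immediate from the formal definitions of $\spl$ and $\join$ (they only reshape the column data $c$ and redistribute labels between tableaux, never permuting letters of the base word), but it is the one spot where the restriction to the set $W$ has to be shown to be invariant under the involution in order for the general framework to apply cleanly. Once this invariance is checked, everything else reduces to quoting the already-established results.
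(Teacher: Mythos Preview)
Your proposal is correct and follows essentially the same route as the paper: expand $s_\lambda^\ast$ in the Macdonald basis, replace $\langle s_{\lambda'},\widetilde H_\mu\rangle\rvert_{t=1}$ by a $\revmaj$-sum over lattice words via the SYT bijection and Lemma~\ref{lem:tableau-to-lattice-stat}, and then rerun the entire $\psi$/$\varphi$/$\iota$ machinery with $W=\LW(\lambda')$ and $S=\Asc$. Your explicit remark that $\spl$ and $\join$ leave the concatenated base word unchanged (hence $\psi$ preserves the lattice-word condition) is a point the paper leaves implicit, so if anything your write-up is slightly more careful than the original.
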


\begin{example}
	For $\lambda = (2,1)$ and $\gamma = (1)$, we have \[ \widetilde{\Delta}_{m_\gamma} \Xi s_{\lambda} = (q+2) e_{111} + (q^3+4q^2+6q+4) e_{21} + (q^4+2q^3+2q^2+q) e_3. \] The corresponding combinatorial expansion is shown in Figure~\ref{FigureFullExample}, and we indeed see that they coincide.
\end{example}

\section{Special cases}

Our construction yields a lot of special cases that are of interest and that find matches in the literature. In this section, we aim to go through some of them.

\subsection{The extended Delta theorem when \texorpdfstring{$t=1$}{t=1}}

From the fundamental fact that $\Xi e_n = e_n$, as a corollary of our construction we obtain a special case of the results in \cite{Hicks-Romero-2018} and \cite{Romero-Thesis}. Here, the polyominoes in $\PP^{\gamma}_{(n),\eta}$ are precisely the set of parallelogram polyominoes whose top path has vertical segments whose lengths rearrange to $\eta$, and whose bottom path has horizontal segments (ignoring the first step of the path) whose lengths rearrange to $\gamma$. There is only one word labeling the polyominoes, consisting of only $1$'s. There are no descents or ascents in this word. Our bijection to $\gamma$-parking functions produces a pair of paths $(P',Q')$ where $P$ has no consecutive North steps, and the bottom path has lengths rearranging to $\gamma + 1^n$.

The same result can be used to derive the $t=1$ case of the Extended Delta Theorem (\cite{Haglund-Remmel-Wilson-2018, DAdderio-Mellit-Compositional-Delta-2020, Blasiak-Haiman-Morse-Pun-Seeling-Extended-Delta-2021}). In fact, there is an explicit bijection between appropriate families of our objects and the objects appearing in the theorem; it states the following.

\begin{theorem}[Extended Delta Theorem]
	For $m, n, k \in \mathbb{N}$, we have the monomial expansion \[ \Delta_{h_m e_{n-k}} e_n = \sum_{p \in \LD(n,m)^{\ast k}} q^{\dinv(p)} t^{\area(p)} x^p. \]
\end{theorem}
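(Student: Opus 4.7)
The full bivariate statement is established in \cite{Blasiak-Haiman-Morse-Pun-Seeling-Extended-Delta-2021} (building on \cite{Haglund-Remmel-Wilson-2018} and \cite{DAdderio-Mellit-Compositional-Delta-2020}); the plan is to recover the $t=1$ specialization from the framework developed in this paper, by exhibiting an explicit bijection between an appropriate union of $\gamma$-parking functions and the set $\LD(n,m)^{*k}$ at $t=1$.

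The first step is to reduce the left-hand side to an instance the main theorem can handle. Using the expansion $h_m = \sum_{\gamma \vdash m} m_\gamma$ and linearity of $\widetilde{\Delta}$,
\[
\widetilde{\Delta}_{h_m e_{n-k}} e_n = \sum_{\gamma \vdash m} \widetilde{\Delta}_{m_\gamma} \widetilde{\Delta}_{e_{n-k}} e_n.
\]
The factor $\widetilde{\Delta}_{e_{n-k}} e_n$ is then rewritten in terms of $\Xi e_\lambda$ for suitable $\lambda$, using the identity $\Xi e_n = e_n$ together with the eigenvalue formula $\widetilde{\Delta}_F h_\mu[X/(1-q)] = F[\sum_i [\mu_i]_q] h_\mu[X/(1-q)]$ recalled in Section \ref{Sectionsymmetricfunctions}. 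The case $k=n$ reduces immediately to $\widetilde{\Delta}_{m_\gamma} e_n$, for which the main theorem yields the $e$-expansion indexed by $\PF_{(n)}^\gamma$; for general $k$, the effect of the extra $\widetilde{\Delta}_{e_{n-k}}$ is absorbed into an additional decoration on the combinatorial objects, accounting for $n-k$ extra ``rise'' marks among the North steps.

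The second step is to build a weight-preserving bijection from the resulting family of $\gamma$-parking functions (with $\gamma \vdash m$ tracking the $m$-valley positions, and an extra $k$-subset of the North steps tracking the decorated rises) to $\LD(n,m)^{*k}$ evaluated at $t=1$, i.e.\ to the area-zero elements of that set. Under this bijection, the labels of the parking function translate to the labels of the Dyck path (hence $e_{\eta(p)}$ corresponds to $x^p$ after the standard conversion between $e$-expansion and monomial expansion), the positions of the $\gamma$-data encode valleys with multiplicity prescribed by $\gamma$, and the positions of the $k$ extra marks encode the decorated rises.

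The principal obstacle is matching the statistics: our $\area$ on $\PF^\gamma_\lambda$ counts cells of a polyomino, while $\dinv$ on $\LD(n,m)^{*k}$ counts attacking pairs of labels. At $t=1$ the $\area$ statistic on the Dyck-path side collapses, and only $\dinv$ survives, so the required identification is between two genuinely $q$-only statistics of different combinatorial flavor. A local-to-global analysis, tracking how a single decoration or label swap affects both the number of cells between paths on our side and the number of attacking pairs on the decorated Dyck path side, should reduce this to a finite verification on elementary moves; this is where the bulk of the work lies.
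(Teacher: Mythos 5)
Your appeal to \cite{Blasiak-Haiman-Morse-Pun-Seeling-Extended-Delta-2021} for the full bivariate statement matches the paper, which likewise treats the theorem as known and only derives its $t=1$ specialization from the $\gamma$-parking function machinery. But your plan for that derivation contains genuine gaps. First, a concrete error: you identify ``$\LD(n,m)^{\ast k}$ evaluated at $t=1$'' with ``the area-zero elements of that set.'' Setting $t=1$ does not restrict the index set at all; it merely forgets the $\area$ statistic (area-zero elements would correspond to $t=0$). So the target of your proposed weight-preserving bijection is wrong, and no bijection onto that subset could match the weights.

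Second, and more fundamentally, you arrange for the wrong statistic to survive the specialization. If you set $t=1$ directly in the combinatorial sum, then for a fixed path the labelings contribute $\sum q^{\dinv(p)} x^p$, and since $\dinv$ depends on the labels this is an LLT-type polynomial, not a power of $q$ times $e_\eta$; your claim that ``$e_{\eta(p)}$ corresponds to $x^p$ after the standard conversion'' therefore fails, and you are left needing an $\area$-versus-$\dinv$ equidistribution jointly with the label data. Your proposed ``local-to-global analysis \dots finite verification on elementary moves'' is a hope rather than an argument: $\dinv$ is a global statistic, and there is no reason a single local move changes the polyomino cell count and the number of attacking pairs by the same amount. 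The paper sidesteps all of this with one observation you are missing: $\Delta_{h_m e_{n-k}} e_n$ is symmetric in $q$ and $t$, so one may instead set $q=1$ and $t=q$. That kills $\dinv$, and since $\area$ is independent of the labels, the labelings of a fixed path sum to exactly $e_\eta$, producing an $e$-expansion whose statistic is the Dyck-path $\area$. What remains is then an area-to-area, $e$-composition-preserving bijection, which is tractable and is the five-step construction of Example~\ref{ExampleExtendedDelta}; there the factor $h_m e_{n-k}$ enters not through a separate application of $\widetilde{\Delta}_{e_{n-k}}$, as in your sketch, but through its monomial expansion, realized as $n-k$ East-step insertions in distinct rows and $m$ unrestricted East-step insertions in the bottom path, with the $k$ decorations of $\LD(n,m)^{\ast k}$ (note: $k$ of them by definition, not the $n-k$ ``rise marks'' of your sketch) emerging from the consecutive North-step pairs of the modified bottom path.
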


Here, $\LD(n,m)^{\ast k}$ is the set of labeled Dyck paths of size $m+n$, with $m$ zero labels (that cannot be in the first column) and $k$ decorated double rises (i.e\ North steps preceded by North steps; the first step also counts as a double rise); $\dinv(p)$ is the number of \emph{diagonal inversions} of the path (that we do not define); $\area(p)$ is the number of whole squares between the path and the main diagonal in rows that do not contain decorations; and $x^p$ is the product of the variables indexed by labels of the path, where $x_0 = 1$. For our purposes, it will be more convenient to consider paths with decorated double falls (i.e\ East steps followed by East steps; the last step also counts as a double fall), which is clearly equivalent.

Note that the symmetric function is also symmetric in $q$ and $t$, so we can set $q=1, t=q$ and since the area does not depend on the labels, this gives us an $e$-expansion of the symmetric function when $t=1$. We are going to show that this expansion coincides with ours. Since the content of the labeling is $(n)$, it is more convenient to use ascent polyominoes, rather than $\gamma$-parking functions, and disregard the labeling altogether.

Indeed, $h_m e_{n-k}$ is monomial-positive: the coefficient of $m_\gamma$ is the number of ways one can fill $\ell(\gamma)$ ordered boxes with $m$ green balls and $n-k$ blue balls, such that the $i^{\ith}$ box contains $\gamma_i$ balls in total and no box contains $2$ or more green balls. In our language, this means that the $e$-expansion of $\Delta_{h_m e_{n-k}} e_n$ will be given by all the $\gamma$-parking functions of content $(n)$ such that the bottom path is obtained by starting with $EN^n$, then inserting $n-k$ East steps in different rows colored green, and then inserting $m$ more blue East steps in any possible way.

The bijection we give is essentially a combination of \cite[Theorem~6.17]{DAdderio-Iraci-VandenWyngaerd-TheBible-2019}
(see also \cite[Theorem~6.1]{DAdderio-Iraci-VandenWyngaerd-GenDeltaSchroeder-2019}) and \cite[Theorem~2]{Romero-Thesis}, disregarding the labels. We will not give a rigorous proof, but we will go through an example in detail, and the generalization follows.

\begin{example} \label{ExampleExtendedDelta}
	Let $n=7$, $k=4$, and $m=3$.

	\textbf{Step 1.} We start with the bottom path $NE^7$, we add $n-k=3$ East steps in different rows (in green in the picture), and then we add $m=3$ East steps anywhere (in blue in the picture). To compute the area, we ignore the cells that touch the bottom path; the cells that do contribute are shaded.

	\begin{center}
		\scalebox{0.6}{
			\begin{tikzpicture}
				\draw[draw=none, use as bounding box] (-1, -1) rectangle (8,8);
				\draw[gray!60, thin] (0,0) grid (7,7);

				\filldraw[yellow, opacity=0.3] (0,0) -- (0,1) -- (1,1) -- (1,3) -- (4,3) -- (4,5) -- (5,5) -- (5,6) -- (6,6) -- (6,7) -- (5,7) -- (4,7) -- (4,6) -- (3,6) -- (2,6) -- (2,5) -- (2,4) -- (1,4) -- (0,4) -- (0,3) -- (0,2) -- (0,1) -- (0,0);

				\draw[black, sharp <-sharp >, sharp angle = -45, line width=1.6pt] (0,0) -- (0,1) -- (0,2) -- (0,3) -- (0,4) -- (1,4) -- (2,4) -- (2,5) -- (2,6) -- (3,6) -- (4,6) -- (4,7) -- (5,7) -- (6,7) -- (7,7);

				\draw[black, sharp <-sharp >, sharp angle = 45, line width=1.6pt] (0,0) -- (1,0) -- (2,0) -- (2,1) -- (2,2) -- (3,2) -- (4,2) -- (5,2) -- (5,3) -- (5,4) -- (6,4) -- (6,5) -- (7,5) -- (7,6) -- (7,7);

				\draw[green, sharp <-sharp >, sharp < angle = 0, sharp > angle = 45, line width=1.6pt] (1,0) -- (2,0);
				\draw[green, sharp <-sharp >, sharp < angle = 45, sharp > angle = 0, line width=1.6pt] (2,2) -- (3,2);
				\draw[green, sharp <-sharp >, sharp < angle = 45, sharp > angle = 45, line width=1.6pt] (5,4) -- (6,4);

				\draw[blue, sharp <-sharp >, sharp < angle = 0, sharp > angle = 45, line width=1.6pt] (3,2) -- (5,2);
				\draw[blue, sharp <-sharp >, sharp < angle = 45, sharp > angle = 45, line width=1.6pt] (6,5) -- (7,5);
			\end{tikzpicture}
		}
	\end{center}

	\textbf{Step 2.} For each blue step (the ones coming from $h_m$) we insert a North step in the bottom path right before it, and insert a North step in the top path in the column one unit to the left. We mark the North steps added to the top path with a $\bullet$ symbol, to denote that they will be the ``empty'' valleys in the final picture. We highlight these steps in red. Notice that the area does not change in the process, as the number of shaded cells in each column does not change.

	\begin{center}
		\scalebox{0.6}{
			\begin{tikzpicture}
				\draw[draw=none, use as bounding box] (-1, -1) rectangle (8,11);
				\draw[gray!60, thin] (0,0) grid (7,10);

				\filldraw[yellow, opacity=0.3] (0,1) -- (0,4) -- (2,4) -- (2,7) -- (3,7) -- (3,8) -- (4,8) -- (4,9) -- (5,9) -- (5,10) -- (6,10) -- (6,9) -- (5,9) -- (5,7) -- (4,7) -- (4,5) -- (3,5) -- (3,4) -- (2,4) -- (2,3) -- (1,3) -- (1,1) -- (0,1);

				\draw[black, sharp <-sharp >, sharp < angle = 45, sharp > angle = 45, line width=1.6pt] (0,0) -- (0,4) -- (2,4);
				\draw[red, sharp <-sharp >, sharp < angle = -45, sharp > angle = 0, line width=1.6pt] (2,4) -- (2,5);
				\draw[black, sharp <-sharp >, sharp < angle = 0, sharp > angle = 45, line width=1.6pt] (2,5) -- (2,7) -- (3,7);
				\draw[red, sharp <-sharp >, sharp < angle = -45, sharp > angle = -45, line width=1.6pt] (3,7) -- (3,8);
				\draw[black, sharp <-sharp >, sharp < angle = 45, sharp > angle = 45, line width=1.6pt] (3,8) -- (4,8) -- (4,9) -- (5,9);
				\draw[red, sharp <-sharp >, sharp < angle = -45, sharp > angle = -45, line width=1.6pt] (5,9) -- (5,10);
				\draw[black, sharp <-sharp >, sharp < angle = 45, sharp > angle = -45, line width=1.6pt] (5,10) -- (7,10);

				\draw[black, sharp <-sharp >, sharp < angle = -45, sharp > angle = 0, line width=1.6pt] (0,0) -- (1,0);
				\draw[green, sharp <-sharp >, sharp < angle = 0, sharp > angle = 45, line width=1.6pt] (1,0) -- (2,0);
				\draw[black, sharp <-sharp >, sharp < angle = -45, sharp > angle = -45, line width=1.6pt] (2,0) -- (2,2);
				\draw[green, sharp <-sharp >, sharp angle = -45, sharp > angle = 45, line width=1.6pt] (2,2) -- (3,2);
				\draw[red, sharp <-sharp >, sharp < angle = -45, sharp > angle = -45, line width=1.6pt] (3,2) -- (3,3);
				\draw[blue, sharp <-sharp >, sharp < angle = 45, sharp > angle = 45, line width=1.6pt] (3,3) -- (4,3);
				\draw[red, sharp <-sharp >, sharp < angle = -45, sharp > angle = -45, line width=1.6pt] (4,3) -- (4,4);
				\draw[blue, sharp <-sharp >, sharp < angle = 45, sharp > angle = 45, line width=1.6pt] (4,4) -- (5,4);
				\draw[black, sharp <-sharp >, sharp < angle = -45, sharp > angle = -45, line width=1.6pt] (5,4) -- (5,5) -- (5,6);
				\draw[green, sharp <-sharp >, sharp < angle = 45, sharp > angle = 45, line width=1.6pt] (5,6) -- (6,6);
				\draw[black, sharp <-sharp >, sharp < angle = -45, sharp > angle = 0, line width=1.6pt] (6,6) -- (6,7);
				\draw[red, sharp <-sharp >, sharp < angle = 0, sharp > angle = -45, line width=1.6pt] (6,7) -- (6,8);
				\draw[blue, sharp <-sharp >, sharp < angle = 45, sharp > angle = 45, line width=1.6pt] (6,8) -- (7,8);
				\draw[black, sharp <-sharp >, sharp < angle = -45, sharp > angle = 45, line width=1.6pt] (7,8) -- (7,9) -- (7,10);

				\node at (2.5, 4.5) {\Large $\bullet$};
				\node at (3.5, 7.5) {\Large $\bullet$};
				\node at (5.5, 9.5) {\Large $\bullet$};
			\end{tikzpicture}
		}
	\end{center}

	\textbf{Step 3.} For each pair of consecutive North steps in the bottom path, we insert an East step in between them, and an East step in the top path in the same column, on the right side of the already present East step. We mark these extra East steps with a $\ast$ symbol. We do the same in the first row if there is no green step there. By construction, in the top path, these added steps have another East step to the left. We do not shade the area in the newly introduced column. We won't need the previous colours anymore, so we highlight again in red these new steps.

	\begin{center}
		\scalebox{0.6}{
			\begin{tikzpicture}
				\draw[draw=none, use as bounding box] (-1, -1) rectangle (12,11);
				\draw[gray!60, thin] (0,0) grid (11,10);

				\filldraw[yellow, opacity=0.3] (0,1) -- (0,4) -- (2,4) -- (2,3) -- (1,3) -- (1,1) -- (0,1) (3,4) -- (3,7) -- (4,7) -- (4,8) -- (5,8) -- (5,9) -- (6,9) -- (6,7) -- (5,7) -- (5,5) -- (4,5) -- (4,4) -- (3,4) (7,9) -- (7,10) -- (8,10) -- (8,9);

				\draw[black, sharp <-sharp >, sharp < angle = 45, sharp > angle = -0, line width=1.6pt] (0,0) -- (0,4) -- (2,4);
				\draw[red, sharp <-sharp >, sharp < angle = 0, sharp > angle = 45, line width=1.6pt] (2,4) -- (3,4);
				\draw[black, sharp <-sharp >, sharp < angle = -45, sharp > angle = -0, line width=1.6pt] (3,4) -- (3,5) -- (3,7) -- (4,7) -- (4,8) -- (5,8) -- (5,9) -- (6,9);
				\draw[red, sharp <-sharp >, sharp < angle = 0, sharp > angle = 45, line width=1.6pt] (6,9) -- (7,9);
				\draw[black, sharp <-sharp >, sharp < angle = -45, sharp > angle = -0, line width=1.6pt] (7,9)-- (7,10) -- (8,10);
				\draw[red, sharp <-sharp >, sharp < angle = 0, sharp > angle = -0, line width=1.6pt] (8,10) -- (9,10);
				\draw[black, sharp <-sharp >, sharp < angle = 0, sharp > angle = -0, line width=1.6pt] (9,10) -- (10,10);
				\draw[red, sharp <-sharp >, sharp < angle = 0, sharp > angle = -45, line width=1.6pt] (10,10) -- (11,10);

				\draw[black, sharp <-sharp >, sharp < angle = -45, sharp > angle = -45, line width=1.6pt] (0,0) -- (1,0) -- (2,0) -- (2,1);
				\draw[red, sharp <-sharp >, sharp < angle = 45, sharp > angle = 45, line width=1.6pt] (2,1) -- (3,1);
				\draw[black, sharp <-sharp >, sharp < angle = -45, sharp > angle = -45, line width=1.6pt]  (3,1) -- (3,2) -- (4,2) -- (4,3) -- (5,3) -- (5,4) -- (6,4) -- (6,5);
				\draw[red, sharp <-sharp >, sharp < angle = 45, sharp > angle = 45, line width=1.6pt] (6,5) -- (7,5);
				\draw[black, sharp <-sharp >, sharp < angle = -45, sharp > angle = -45, line width=1.6pt] (7,5) -- (7,6) -- (8,6) -- (8,7);
				\draw[red, sharp <-sharp >, sharp < angle = 45, sharp > angle = 45, line width=1.6pt] (8,7) -- (9,7);
				\draw[black, sharp <-sharp >, sharp < angle = -45, sharp > angle = -45, line width=1.6pt] (9,7) -- (9,8) -- (10,8) -- (10,9);
				\draw[red, sharp <-sharp >, sharp < angle = 45, sharp > angle = 45, line width=1.6pt] (10,9) -- (11,9);
				\draw[black, sharp <-sharp >, sharp < angle = -45, sharp > angle = 45, line width=1.6pt] (11,9) -- (11,10);

				\node at (3.5, 4.5) {\Large $\bullet$};
				\node at (4.5, 7.5) {\Large $\bullet$};
				\node at (7.5, 9.5) {\Large $\bullet$};

				\node at (2.5, 4.5) {\Large $\ast$};
				\node at (6.5, 9.5) {\Large $\ast$};
				\node at (8.5, 10.5) {\Large $\ast$};
				\node at (10.5, 10.5) {\Large $\ast$};
			\end{tikzpicture}
		}
	\end{center}

	\textbf{Step 4.} Now, we move the $\ast$ symbols one column to the left, and we move all the shaded cells that end up under a $\ast$ one column to the right. Then, we draw the diagonal $y=x$.

	\begin{center}
		\scalebox{0.6}{
			\begin{tikzpicture}
				\draw[draw=none, use as bounding box] (-1, -1) rectangle (12,11);
				\draw[gray!60, thin] (0,0) grid (11,10) (0,0) -- (10,10);

				\filldraw[yellow, opacity=0.3] (0,1) -- (0,4) -- (1,4) -- (1,1)  (2,3) -- (2,4) -- (3,4) -- (3,3) (3,4) -- (3,7) -- (4,7) -- (4,8) -- (5,8) -- (5,5) -- (4,5) -- (4,4) -- (3,4) (6,7) -- (6,9) -- (7,9) -- (7,7) (9,9) -- (9,10) -- (8,10) -- (8,9);

				\draw[black, sharp <-sharp >, sharp angle = -45, line width=1.6pt] (0,0) -- (0,4) -- (2,4) -- (3,4) -- (3,5) -- (3,7) -- (4,7) -- (4,8) -- (5,8) -- (5,9) -- (6,9) -- (7,9) -- (7,10) -- (8,10) -- (9,10)-- (10,10)  -- (11,10);

				\draw[black, sharp <-sharp >, sharp angle = 45, line width=1.6pt] (0,0) -- (1,0) -- (2,0) -- (2,1) -- (3,1) -- (3,2) -- (4,2) -- (4,3) -- (5,3) -- (5,4) -- (6,4) -- (6,5) -- (7,5) -- (7,6) -- (8,6) -- (8,7) -- (9,7) -- (9,8) -- (10,8) -- (10,9) -- (11,9) -- (11,10);

				\node at (3.5, 4.5) {\Large $\bullet$};
				\node at (4.5, 7.5) {\Large $\bullet$};
				\node at (7.5, 9.5) {\Large $\bullet$};

				\node at (1.5, 4.5) {\Large $\ast$};
				\node at (5.5, 9.5) {\Large $\ast$};
				\node at (7.5, 10.5) {\Large $\ast$};
				\node at (9.5, 10.5) {\Large $\ast$};
			\end{tikzpicture}
		}
	\end{center}

	\textbf{Step 5.} The part of the top path going from $(0,0)$ to $(m+n,m+n)$ is now a Dyck path. By construction, it has $m$ marked valleys and $n-k$ decorated double falls (i.e.\ pairs of consecutive East steps). The area of the path is the number of whole cells between the path and the diagonal that are not in columns containing a $\ast$, which are exactly the shaded cells. The lengths of the vertical segments, ignoring the parts containing a marked valley, are also preserved. It follows that this procedure yields a bijection that preserves both the $\area$ and the $e$-composition, as desired.
\end{example}

\subsection{The parking function case}

When $\lambda = 1^n$ and $\gamma = \varnothing$, then elements in $\PF^\varnothing_{1^n}$ are constructed by selecting a $\varnothing$-Dyck path and writing the numbers $1, \dots, n$ along the North segments of the top path so that the columns are increasing. This is precisely the set of parking functions in the classical sense.

\begin{figure}[!ht]
	\centering
	\scalebox{.6}{
		\begin{tikzpicture}[scale=1]
			\draw[gray!60, thin] (0,0) grid (9,8) (0,0) -- (8,8);
			\filldraw[yellow, opacity=0.3] (0,0) -- (1,0) -- (2,0) -- (2,1) -- (3,1) -- (3,2) -- (4,2) -- (4,3) -- (5,3) -- (5,4) -- (6,4) -- (6,5) -- (7,5) -- (7,6) -- (8,6) -- (8,7) -- (9,7) -- (9,8) -- (8,8) -- (7,8) -- (6,8) -- (6,7) -- (5,7) -- (5,6) -- (4,6) -- (3,6) -- (3,5) -- (3,4) -- (2,4) -- (1,4) -- (1,3) -- (0,3) -- (0,2) -- (0,1) -- (0,0);
			\draw[red, sharp <-sharp >, sharp angle = -45, line width=1.6pt] (0,0) -- (0,1) -- (0,2) -- (0,3) -- (1,3) -- (1,4) -- (2,4) -- (3,4) -- (3,5) -- (3,6) -- (4,6) -- (5,6) -- (5,7) -- (6,7) -- (6,8) -- (7,8) -- (8,8) -- (9,8);
			\draw[green, sharp <-sharp >, sharp angle = 45, line width=1.6pt] (0,0) -- (1,0) -- (2,0) -- (2,1) -- (3,1) -- (3,2) -- (4,2) -- (4,3) -- (5,3) -- (5,4) -- (6,4) -- (6,5) -- (7,5) -- (7,6) -- (8,6) -- (8,7) -- (9,7) -- (9,8);
			\node at (0.5,0.5) {\LARGE$2$};
			\node at (0.5,1.5) {\LARGE$4$};
			\node at (0.5,2.5) {\LARGE$7$};
			\node at (1.5,3.5) {\LARGE$5$};
			\node at (3.5,4.5) {\LARGE$1$};
			\node at (3.5,5.5) {\LARGE$3$};
			\node at (5.5,6.5) {\LARGE$8$};
			\node at (6.5,7.5) {\LARGE$6$};
		\end{tikzpicture}
	}
	\caption{A parking function of size $8$}
	\label{fig:parking-function}
\end{figure}

Figure~\ref{fig:parking-function} shows an element of $\PF_8 = \PF_{(8)}^\varnothing$ (the parking functions of size $8$), drawn in the style of our $\varnothing$-parking functions. Its area is $10$.

Indeed, we have \[ \Xi e_{1^n} \vert_{t=1} = \sum_{p \in \PF_n} q^{\area(p)} e_{\eta(p)}, \] which suggests that the combinatorial description given in \cite[Conjecture~6.4]{DAdderio-Iraci-LeBorgne-Romero-VandenWyngaerd-2022} should hold. In fact, it is known that spanning trees on the complete graph on $\{0, 1, \dots, n\}$ $K_{n+1}$, rooted at $0$, $q$-weighted with $\kappa$-inversions, are in bijection with parking functions of size $n$, $q$-weighed with the area. To prove the conjecture, one should show that the $e$-expansion (or, likely easier, the $m$-expansion) is preserved by the bijection.

We actually have that \cite[Conjecture~6.1]{DAdderio-Iraci-LeBorgne-Romero-VandenWyngaerd-2022}, which is very close to this statement, is actually solved for the symmetric function $\Theta_{e_{1^n}} e_1$ (evaluated at $t=1$), of which it provides a monomial expansion. At the moment the relation between the $e$-expansion we just provided and the monomial expansion of \cite[Theorem~4.5]{DAdderio-Iraci-LeBorgne-Romero-VandenWyngaerd-2022} remains unclear.

\subsection{\texorpdfstring{$G$}{G}-parking functions, labeled Dyck paths, tiered trees}

It might be possible to apply the construction of the previous subsection in a much broader context. Indeed, we have a statistc-preserving map between $\varnothing$-Dyck paths with content $\alpha$ and some subset of $\alpha$-tiered trees rooted at $0$ (as in \cite[Conjecture~6.4]{DAdderio-Iraci-LeBorgne-Romero-VandenWyngaerd-2022}), which we can obtain using results about $G$-parking functions \cite{Bak-Chao-Kurt-G-Parking-1987, Dhar-1990} (see also \cite{Perkinson-Yang-Yu-2017}) for appropriate multipartite graphs.

This connection may give a proof of \cite[Conjecture~6.4]{DAdderio-Iraci-LeBorgne-Romero-VandenWyngaerd-2022}, provided that one finds a way to translate the $e$-expansion into the monomial expansion. It is also possible that investigating this further would lead to the discovery of a bistatistic on rooted tiered trees that describes the symmetric function in full (without the need to specialize $t=1$).

\subsection{The two-part case}

When $\lambda = (n,m)$ and $\gamma = \varnothing$, our objects are exactly the so-called \emph{two-car parking functions}, that is, labeled Dyck paths of size $m+n$ with $n$ labels equal to $1$ and $m$ labels equal to $2$. By \cite[Theorem~3.11]{DAdderio-Iraci-VandenWyngaerd-TheBible-2019}, we know that these objects are in fact in bijection with (unlabeled) parallelogram polyominoes of size $(m+1) \times (n+1)$, and the bijection preserves the area.

In light of the statement in the previous subsection, this is not surprising, as we know that parallelogram polyominoes are in bijection with (unlabeled) $(n,1,m)$-tiered rooted trees \cite[Proposition~7.4]{DAdderio-Iraci-LeBorgne-Romero-VandenWyngaerd-2022}. In fact, the bijection is given for labeled objects; as before, this potentially allows us to get a monomial expansion, but this does not seem to coincide with the $e$-expansion provided by our theorems. As for the more general case of the previous subsection, this matter is worth investigating.

It might be worth noting that (unlabeled) parallelogram polyominoes are also in bijection with Dyck words in the alphabet $\{ 0 \leq \overline{0} \leq 1 \leq \overline{1} \leq 2 \leq \dots \}$, starting with either $0$ or $\overline{0}$, with $m$ non-barred letters and $n$ barred letters, with an $\area$ statistic given by the sum of the letters.

\section{Concluding remarks}

We should remark that our identities have multiple consequences. For one, for any $F$ which has a positive expansion in terms of the monomial basis, we have found that $\widetilde{\Delta}_F \Xi e_{\lambda}$ and $\widetilde{\Delta}_{F} \Xi s_\lambda$ are $e$-positive, and their expansion can be given in terms of $\gamma$-parking functions. When $F$ equals $s_\mu$, $e_\mu$, $h_\mu$  or $(-1)^{\lvert \mu \rvert -\ell(\mu)} f_\mu$, we can get a set of combinatorial objects by labeling the bottom path of $\gamma$-parking functions (as is done in \cite{Romero-Thesis} and in Example \ref{ExampleExtendedDelta}). Furthermore, if $G$ expands positively in terms of the Schur basis, then we have also found that $\widetilde{\Delta}_{F} \Xi G$ is $e$-positive. We can similarly get expansions for these symmetric functions in terms of lattice $\gamma$-parking functions.

Another important fact, is that many of these symmetric functions become $e$-positive after the substitution $q=1+u$ (without substituting $t=1$). For instance, $\Delta_{m_\gamma} \Xi e_n$ seems to exhibit this $e$-positivity phenomenon.
\begin{conjecture}
	\label{conjecture-epositivity}

	If $F$ is monomial positive, and $G$ is Schur positive, then \[ \Delta_F \Xi G \Big|_{q \rightarrow 1+u} \] is $e$-positive, i.e.\ the coefficients of the $e$-expansion are polynomials in $\mathbb{N}[u,t]$.

	In particular, if $G$ is $e$-positive, the same result holds.
\end{conjecture}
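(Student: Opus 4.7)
The plan is first to use linearity and the fact that $e$-positivity in $\mathbb{N}[u,t]$ is preserved under nonnegative linear combinations to reduce to the cases $F = m_\gamma$ and $G \in \{e_\lambda, s_\lambda\}$. In those cases Theorems~\ref{Theorem1} and \ref{Theorem2} give, at $t=1$, an $e$-expansion $\sum_p q^{\area(p)} e_{\eta(p)}$ over labeled polyominoes, whose substitution $q\mapsto 1+u$ is automatically in $\mathbb{N}[u]$. Let $C_\eta^\gamma(q,t)$ denote the coefficient of $e_\eta$ in $\Delta_{m_\gamma}\Xi G$; the conjecture amounts to showing $C_\eta^\gamma(1+u,t)\in\mathbb{N}[u,t]$, and a natural way to do this is to exhibit, for each labeled polyomino $p$, a statistic $\mathrm{tstat}(p,R)\in\mathbb{N}$ indexed by subsets $R$ of the area cells of $p$, and to prove
\[
C_\eta^\gamma(1+u,t) \;=\; \sum_{p:\,\eta(p)=\eta}\;\sum_{R\subseteq A(p)} u^{|R|}\, t^{\mathrm{tstat}(p,R)},
\]
where $A(p)$ is the set of area cells. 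Setting $t=1$ would collapse the inner sum to $(1+u)^{\area(p)}$, recovering our theorems.

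To locate such a statistic, I would redo the derivation of Section~\ref{sec:preliminary-manipulations} without specializing $t=1$, replacing the $t=1$-specialized form of $\Ht_\mu$ by the full Haglund--Haiman--Loehr monomial formula (a sum over fillings of $\mu$ with explicit $q^{\inv}\,t^{\maj}$ weights). Substituting this into $\sum_\mu \frac{MB_\mu\Pi_\mu}{w_\mu}\,m_\gamma[B_\mu]\,\langle G^\ast,\Ht_\mu\rangle_\ast\,\Ht_\mu$ yields a doubly graded analogue of the signed sum over labeled column-composition tableaux, and the task is then to refine the split/join involution $\psi$ from Theorem~\ref{fixedpointtheorem} so that it is both $q$- and $t$-weight-preserving. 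If this succeeds, the surviving fixed points will parametrize $\gamma$-parking functions equipped with an auxiliary statistic playing the role of $\mathrm{tstat}$.

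The main obstacle is that the cancellations used to pass from $MB_\mu\Pi_\mu/w_\mu$ to $(-1)^{n-\ell(\mu)}(q;q)_\mu^{-1}\sum_{\alpha\in R(\mu)}(1-q^{\alpha_1})$ are very specific to $t=1$: when $t$ is kept, the product $\prod_c(1-q^{a'(c)}t^{l'(c)})$ over the relevant cells divided by $\prod_c(q^{a(c)}-t^{l(c)+1})(t^{l(c)}-q^{a(c)+1})$ does not reduce to a clean per-cell weight, and the subsequent rearrangement of terms into $\overline{\CC}_\mu$ and $\CC_\mu$ blocks breaks down. Thus the entire combinatorial framework must be rebuilt with extra $t$-bookkeeping, and in particular the split map $\spl$ must be modified so that the extra cells it introduces carry a coherent $t$-weight. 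As a practical first step, I would compute $\Delta_{m_\gamma}\Xi e_\lambda$ and $\Delta_{m_\gamma}\Xi s_\lambda$ for small $\lambda$ and $\gamma$, extract the $e$-coefficients, expand them in $(1+u,t)$, and reverse-engineer the correct definition of $\mathrm{tstat}$ from the data.
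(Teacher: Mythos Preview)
This statement is a \emph{conjecture}, not a theorem: the paper does not prove it and does not claim to. The only remark the paper makes is that the special case $F=m_{(k)}$, $G=e_n$ follows from the Delta theorem together with the $e$-positivity of vertical-strip LLT polynomials established in \cite{DAdderio-ePositivity-2020}. Everything else is left open.

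Your proposal is not a proof either, and you say so yourself: you correctly identify that the $t=1$ specialization in Section~\ref{sec:preliminary-manipulations} is what makes $MB_\mu\Pi_\mu/w_\mu$ collapse to a manageable expression, and that with $t$ unspecialized the arm/leg products in $w_\mu$ do not factor cleanly, so the column-composition-tableau framework and the split/join involution do not carry over. This is exactly the obstruction; the paper does not claim to overcome it. Your suggested strategy---rebuild the combinatorics with an extra $t$-statistic $\mathrm{tstat}(p,R)$ on pairs (polyomino, subset of area cells) so that setting $t=1$ recovers $(1+u)^{\area(p)}$---is in fact the content of the paper's follow-up Conjecture~\ref{conjecture-epositivity-t}, which is also left open.

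So there is no discrepancy to report: the paper has no proof, and your proposal is a reasonable research plan rather than a proof. The one concrete piece of progress you could cite, and which you do not mention, is the known special case $\gamma=(k)$, $G=e_n$ via LLT $e$-positivity.
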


The case $\gamma = (k)$ and $G = e_n$ follows from the proof of the Delta theorem and the $e$-positivity on vertical strip $LLT$ polynomials proved in \cite{DAdderio-ePositivity-2020}.

If this conjecture is true, then our combinatorial objects would also give the combinatorial expansions for these symmetric functions. To be more precise, these symmetric functions would enumerate the number of $\gamma$-parking functions (or lattice $\gamma$-parking functions) where some subset of its area cells are chosen. It remains to find a $t$ statistic that would give the entire symmetric function.

\begin{conjecture}
	\label{conjecture-epositivity-t}

	There exists a statistic $\mathsf{tstat}$ from pairs $(p,S)$ with $p \in \PF^{\gamma}_{\lambda}$ and $S$ subset of the area cells of $p$, such that \[
		\left. \Delta_{m_\gamma} \Xi e_{\lambda} \right\rvert_{q \rightarrow 1+u} = \sum_{p \in \PF^{\gamma}_{\lambda} } u^{\# S} t^{\mathsf{tstat}(p,S)} e_{\eta(p)}. \]
\end{conjecture}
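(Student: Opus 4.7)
The plan is to bootstrap from the $t=1$ theorem proved in this paper, from the anchor case $\gamma = (k)$ and $G = e_n$ (where the conjecture is already known), and from D'Adderio's $e$-positivity result for vertical-strip LLT polynomials under the substitution $q \mapsto 1+u$. First I would extract boundary information by setting $u = 0$: this collapses the conjectured right-hand side to $\sum_{p \in \PF^\gamma_\lambda} t^{\mathsf{tstat}(p,\varnothing)} e_{\eta(p)}$, while the left-hand side becomes $\Delta_{m_\gamma}\Xi e_\lambda \big|_{q=1}$. In the cases where $\Delta_{m_\gamma}\Xi e_\lambda$ is $q,t$-symmetric, comparing with the main theorem of this paper forces $\mathsf{tstat}(p,\varnothing) = \area(p)$, which strongly suggests the ansatz $\mathsf{tstat}(p, S) = \area(p) - c(S)$ for some nonnegative correction $c(S)$ measuring the ``zeta depth'' of the cells of $S$ inside $p$, by analogy with the classical zeta map that trades $\dinv$ for $\area$.

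For the anchor case, the Extended Delta Theorem supplies a combinatorial formula $\sum_Q q^{\dinv(Q)} t^{\area(Q)} x^Q$ over decorated labeled Dyck paths; expanding $(1+u)^{\dinv(Q)} = \sum_{T} u^{|T|}$ as a sum over subsets of dinv pairs, regrouping the resulting monomial sum by vertical-strip LLT polynomials, and invoking D'Adderio's $e$-positivity would then yield the desired $e$-expansion. Composing the bijection of Example~\ref{ExampleExtendedDelta} with a zeta-style map relating $\dinv$ and $\area$ should translate each pair $(Q, T)$ into a pair $(p, S)$ with $p \in \PF^\gamma_\lambda$ and $S$ a subset of the area cells of $p$, thereby producing an explicit $\mathsf{tstat}$ for these anchor cases. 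I would carry this out first, use it to calibrate the general form of $\mathsf{tstat}$, and then try to extract a definition that extends naturally to arbitrary $\gamma$-parking functions — perhaps as a weighted count of the North steps of $p$ that see at least one chosen area cell to their right on specific diagonals.

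For arbitrary $\gamma$ and $\lambda$, the strategy is to produce a $(q,t)$-combinatorial formula for $\Delta_{m_\gamma}\Xi e_\lambda$ as a sum of vertical-strip LLT polynomials indexed by labeled $\gamma$-parking-function data, apply D'Adderio's $e$-positivity, and finally match the output with pairs $(p, S)$ via a zeta-style bijection. Natural tools are the Carlsson-Mellit Dyck path algebra and the Theta-operator identities of D'Adderio-Romero, which decompose $\Xi e_\lambda$ and $\Delta_{m_\gamma}$ into elementary creation operators whose combinatorial action on LLT generating functions is understood. The principal obstacle is precisely this last task: an honest $(q,t)$-combinatorial formula for $\Delta_{m_\gamma}\Xi e_\lambda$ is open outside narrow special cases, and any proof of Conjecture~\ref{conjecture-epositivity-t} along this route must first settle that open problem. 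Without such a formula there is no combinatorial anchor from which to invert the $q \mapsto 1+u$ expansion and reassemble the terms as $u^{|S|} t^{\mathsf{tstat}(p,S)} e_{\eta(p)}$, so the difficulty here is algebraic-combinatorial rather than merely bookkeeping.
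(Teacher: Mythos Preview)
The statement you are trying to prove is labeled as a \emph{conjecture} in the paper, and the paper provides no proof of it whatsoever; it is explicitly left open. So there is no ``paper's own proof'' to compare against, and your task was not to prove an established theorem but to attack a genuinely open problem.

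Your proposal is not a proof but a research strategy, and you are candid about this: you yourself identify the principal obstacle, namely that your approach requires a $(q,t)$-combinatorial formula for $\Delta_{m_\gamma}\Xi e_\lambda$ in terms of vertical-strip LLT polynomials, which is open outside narrow special cases. That diagnosis is accurate. A few further remarks on the plan: the boundary calibration at $u=0$ relies on $q,t$-symmetry of $\Delta_{m_\gamma}\Xi e_\lambda$, which is not known in general (and you hedge with ``in the cases where''), so even the ansatz $\mathsf{tstat}(p,\varnothing)=\area(p)$ is not forced; and D'Adderio's $e$-positivity for vertical-strip LLT's gives an $e$-expansion but does not by itself produce a bijection to pairs $(p,S)$, so the ``zeta-style'' step would need substantial new combinatorics even in the anchor case. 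In short, the proposal is a reasonable sketch of where one might look, but it does not close any of the gaps, and the paper makes no claim to have done so either.
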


\section{Acknowledgements}
M. Romero was partially supported by the NSF Mathematical Sciences Postdoctoral Research Fellowship DMS-1902731.

\bibliographystyle{amsalpha}
\bibliography{bibliography}

\end{document}